\numberwithin{equation}{section}
\newtheorem{theorem}{Theorem}
\newtheorem{remark}{Remark}
\newtheorem{proposition}[theorem]{Proposition}
\numberwithin{theorem}{section}
\newcommand{\pderiv}[2]{\frac{\partial #1}{\partial #2}}
\begin{document}

\title{A contour method for time-fractional PDEs\\and an application to fractional viscoelastic beam equations}

\author{Matthew J. Colbrook\thanks{Corresponding author: m.colbrook@damtp.cam.ac.uk},$\quad$ Lorna J. Ayton}
\affil{Department of Applied Mathematics and Theoretical Physics\\University of Cambridge}

\date{\today}

\maketitle

\begin{abstract}
We develop a rapid and accurate contour method for the solution of time-fractional PDEs. The method inverts the Laplace transform via an optimised stable quadrature rule, suitable for infinite-dimensional operators, whose error decreases like $\exp(-cN/\log(N))$ for $N$ quadrature points. The method is parallisable, avoids having to resolve singularities of the solution as $t\downarrow 0$, and avoids the large memory consumption that can be a challenge for time-stepping methods applied to time-fractional PDEs. The ODEs resulting from quadrature are solved using adaptive sparse spectral methods that converge exponentially with optimal linear complexity. These solutions of ODEs are reused for different times. We provide a complete analysis of our approach for fractional beam equations used to model small-amplitude vibration of viscoelastic materials with a fractional Kelvin--Voigt stress-strain relationship. We calculate the system's energy evolution over time and the surface deformation in cases of both constant and non-constant viscoelastic parameters. An infinite-dimensional ``solve-then-discretise'' approach considerably simplifies the analysis, which studies the generalisation of the numerical range of a quasi-linearisation of a suitable operator pencil. This allows us to build an efficient algorithm with explicit error control. The approach can be readily adapted to other time-fractional PDEs and is {not} constrained to fractional parameters in the range $0<\nu<1$.
\end{abstract}

\begin{keywords}
fractional derivative, contour methods, spectral
methods, error control, viscoelastic beam structures
\end{keywords}

\section{Introduction}
\label{sec:intro}

Fractional derivatives and fractional differential equations (FDEs) are playing an increasingly important and widely used role in the modelling of biological and physical processes \cite{rossikhin2010application,hilfer2000applications,mainardi2010fractional,sabatier2007advances,magin2010fractional,oldham2010fractional,scalas2000fractional,sheng2011fractional,metzler2000random,barkai2000continuous,klages2008anomalous,colbrook2017scaling,bronstein2009transient,holm2011causal}. One of the most useful modelling properties of fractional derivatives is their non-local nature. However, this feature also makes the design of accurate, fast and robust numerical methods for the solution of FDEs decidedly non-trivial. The most common numerical methods used for FDEs (see \cite{baleanu2012fractional,li2015numerical} for overviews) have historically been convolution quadrature \cite{lubich1986discretized,lubich1988convolution,fritz2021time,lubich1983stability}, finite differences \cite{cui2009compact,meerschaert2006finite,yuste2005explicit,lin2007finite,sun2006fully,jin2013error,meerschaert2006finite2,tadjeran2007second}, and finite elements \cite{deng2009finite,ford2011finite,liu2004numerical}. A major challenge with these \textit{local} approaches is obtaining high accuracy, particularly over large time/spatial scales. Due to the \textit{global} (nonlocal) nature of fractional derivatives, spectral methods have recently been proposed as an attractive alternative \cite{hale2018fast,li2009space,zayernouri2014fractional,mao2018spectral,zayernouri2013fractional,chen2016generalized,li2010existence}. For example, the work of Zayernouri \& Karniadakis \cite{zayernouri2014fractional,zayernouri2013fractional} developed an exponentially accurate collocation scheme, leading to dense matrices and $\mathcal{O}(N^3)$ complexity. An alternative sparse spectral method for rational-order derivatives with linear complexity was developed by Hale \& Olver \cite{hale2018fast}.

In this paper, we focus on time-fractional differential equations (TFDEs). A challenge in this scenario is that the solution at time $t>0$ depends on the solution at all previous times, which typically makes the approximation of the solution computationally demanding. Rather than taking a spectral method approach in time, we develop a contour method based on the Laplace transform and quadrature rules for the inverse Laplace transform. There is a large literature on contour methods for inverting the Laplace transform (see \S \ref{cont_descrip} and the summary in \cite{trefethen2014exponentially}), but relatively little work on the use of such methods for TFDEs. An exception is the time-fractional diffusion equation, studied in \cite{weideman2006optimizing,pang2016fast}, and the earlier work in \cite{mclean2010maximum,mclean2010numerical}. The inverse Laplace transform via contour methods has, however, been used to great effect in evaluating (scalar) Mittag--Leffler functions \cite{garrappa2015numerical,mclean2021numerical}, which appear ubiquitously when studying fractional differential equations. A goal of this paper is to show that contour methods can be very effective at solving more complicated TFDEs.

There are many potential advantages of contour method\footnote{Contour methods can also be interpreted in terms of rational approximations \cite{trefethen2006talbot}. For another rational function approach to TFDEs, see \cite{khristenko2021solving}.} approaches to TFDEs, particularly when the Laplace transform of the solution can be computed efficiently. In our setting, the Laplace transform of the solution can be expressed as the solution of a differential operator pencil equation (see \eqref{laplace_form}). The linear systems that result from applying quadrature rules can be solved in parallel and can then be reused for different times. Another advantage, of particular importance for TFDEs, is that contour methods avoid the large memory consumption of time-stepping methods applied to time-fractional PDEs (which occurs due to the non-local nature of fractional derivatives - see \cite{ford2001numerical,diethelm2006efficient} for some techniques aimed at handling this) and provide high accuracy over large time intervals. Moreover, contour methods also avoid having to resolve the singularities of the solution as $t\downarrow0$ \cite{mao2018spectral,chen2016generalized}. Contour methods are suitable for evaluating the solution on time intervals of the form $[t_0,\Lambda t_0]$ with $t_0>0$ and $\Lambda>1$. This means that by considering geometrically scaled intervals, we can evaluate the solution at small or large times (see, for example, Figure \ref{fig:Case1Energy}).

Contour methods also allow the \textit{infinite-dimensional} problem, posed in a separable Hilbert space $\mathcal{H}$, to be tackled directly, as opposed to a truncation or discretisation.\footnote{This ``solve-then-discretise'' paradigm has recently been applied to spectral computations \cite{colbrook2019compute,horning2020feast,colbrook2019foundations,colbrook2019computation2,ben2015can,johnstone2021bulk}, extensions of classical methods such as the QL and QR algorithms \cite{webb_thesis,colbrook2019infinite} (see also \cite{townsend2015continuous}), Krylov methods \cite{olver2009gmres,gilles2019continuous}, spectral measures \cite{webb2017spectra,colbrook2019computing,colbrook2020computing} and computing semigroups \cite{colbrook2021computing}. Related work includes that of Olver, Townsend and Webb, providing a foundational and practical framework for infinite-dimensional numerical linear algebra and computations with infinite data structures \cite{Olver_Townsend_Proceedings, Olver_SIAM_Rev, Olver_code1, Olver_code2}.} In the case of the present paper, this is achieved through adaptive sparse spectral methods (in the space domain) used to compute the solutions of \eqref{laplace_form} with linear complexity and exponential convergence. This infinite-dimensional approach means that we can prove rigorous convergence results under quite general assumptions\footnote{See Theorem \ref{thrown_in_theorem} for our specific example. The result generalises to when the Laplace transform of the solution can be computed with error control and has known regions of analyticity (with bounds).} and also gain explicit \textit{error control} of our approximation.

To achieve rigorous error bounds using contour methods, it is necessary to bound the generalised spectrum and resolvent of the operator pencil. This boils down to an analysis of the singularities of the Laplace transform of the solution and suitable contours of integration. Whilst this must be done separately for each class/family of equations, it is considerably easier to perform this analysis on the infinite-dimensional operator directly (e.g., Theorem \ref{thm:RES_BOUND}), as opposed to a truncation or discretisation. Thus, another goal of this paper is to advocate an infinite-dimensional approach to contour methods, as opposed to applying contour methods after a reduction to a finite-dimensional system. To be concrete, we focus in this paper on the case of fractional beam equations that arise in the modelling of thin viscoelastic materials. The analysis is performed by a quasi-linearisation of the operator pencil (which is non-linear in the spectral parameter) and by studying the generalisation of the numerical range. Though we have restricted the analysis to the class of TFDEs discussed in \S \ref{sec:phys_setup}, similar approaches may be used for other TFDEs. In our case, the location of the generalised spectrum allows deformation of the Bromwich contour (appearing in the inverse Laplace transform) into the left-half plane. For TFDEs where this does not occur, regularisation of the integrand should be performed \cite{colbrook2021computing}.

\subsection{TFDEs modelling thin viscoelastic beam vibration}

The modelling of the vibration of thin viscoelastic beams is an ideal application for our contour method. Traditional Euler--Bernoulli beam theory, governing the linear vibration of thin elastic beams undergoing small-amplitude oscillation, has been a cornerstone of structural engineering since the 19th century \cite{Timoshenko}. However, recently, modern materials embedded with polymer structures or biomaterials have been shown to exhibit more exotic structural properties than their earlier counterparts, in particular, viscoelasticity. Such materials have both elastic and viscous properties, which can be captured experimentally and fitted to constitutive stress-strain relationships \cite{Pritz}. In such a curve-fitting game, one aims to describe the experimental data as concisely as possible with as few parameters as possible. Permitting time-fractional derivatives in the constitutive relationship is therefore a popular way of accurately describing the experimental data with a small number of parameters \cite{KV1,KV_AIAA}.

As a result, the equations governing the vibration of a viscoelastic beam, due to either an initial deformation or ongoing forcing, contain time-fractional derivatives, and the accurate prediction of the beam vibration is reliant on the accurate solution to these governing equations. Previous numerical methods tailored to fractional beam equations include Galerkin/separation of variables approaches \cite{Galerkin,HybridGalerkin,Galerkin2}, finite element methods \cite{FEM,Example2}, wave propagation approaches \cite{Xu2020,mace1984wave} or Laplace transform methods (for the restricted case where the Laplace transform of the solution can be written down analytically) \cite{zheng2002quasi,akoz1999mixed,chen1995hybrid}. Such approaches can suffer from the limited accuracy and high computational cost associated with local approaches, as described above, or have been restricted to the case of constant beam parameters where one can obtain semi-analytical results. We also note that it is popular to use the Riemann--Lioville form of the fractional derivative. Whilst this is entirely correct for zero initial conditions and commonly used to simplify the study of the effects of forcing, it does not account correctly for the history of the deformation for non-zero initial conditions. Rather, one should use the Caputo definition to ensure the time-history is appropriately accounted for \cite{podlubny1998fractional,caputo1967linear}. We outline how our computational approach can use either definition\footnote{Under suitable regularity assumptions, both definitions agree for the case of zero initial conditions.}, but focus on the case of the Caputo definition when analysing physical models.

In this paper, we rectify the modelling of the fractional constituent relation of a linear viscoelastic beam and use it to illustrate our contour-based numerical method. Our approach, as well as being able to cope with non-zero initial deformations, variable coefficients and forcing etc., converges rapidly, stably and can produce results to a user-specified desired accuracy. Code for our method is provided at \textcolor[rgb]{0,0,1}{\url{https://github.com/MColbrook/Contour_frac_PDE}}.

\subsection{Organisation of paper}

The paper is organised as follows. In \S \ref{sec:gen_desc_cont}, we describe the general contour method, including a discussion of the choice of contour and optimal contour parameters in \S \ref{cont_descrip} (see Algorithms \ref{alg:spec_meas} and \ref{alg:spec_meas2}). \S \ref{sec:three} analyses the case of fractional beam equations introduced in \S \ref{sec:phys_setup}, with an analysis of the convergence of the method in \S \ref{sec:solve_desc}. Physically relevant examples are presented in \S \ref{sec:phys_res} and concluding remarks in \S \ref{sec:conc}.

\section{Description of the general contour method}
\label{sec:gen_desc_cont}

Here, we provide a formal description of the method. The specific example considered in this paper is a class of fractional beam equations, described in \S \ref{sec:phys_setup}. The general problem reduces to the inversion of the Laplace transform \cite{arendt2001cauchy}, which is achieved via contour methods and quadrature. The Laplace transform itself is computed using spectral methods (see \S \ref{sec:solve_desc}) for a generalised resolvent of a fractional operator pencil (see \S \ref{sec:finding_spec}). 

\subsection{General setup}\label{sec:gen_abstract}

We first formally describe the method for a general time-fractional equation of the form
\begin{equation}
\label{abstract_form}
\sum_{j=1}^M \mathcal{D}^{\nu_j}_{\mathcal{I}_j,t} A_j q= f(t) \text{ for }t\geq 0,
\end{equation}
where $\mathcal{D}^{\nu}_{\mathcal{I},t}$ is given by
$$
\left[\mathcal{D}^{\nu}_{\mathcal{I},t} g\right](t)=\begin{dcases}
\frac{1}{\Gamma(n-\nu)}\frac{d^n}{dt^n}\int_{0}^t (t-\tau)^{n-\nu-1}g(\tau)d\tau,\quad &\text{if }\mathcal{I}=\mathrm{RL},\\
\frac{1}{\Gamma(n-\nu)}\int_{0}^t (t-\tau)^{n-\nu-1}g^{(n)}(\tau)d\tau ,\quad &\text{if }\mathcal{I}=\mathrm{C},
\end{dcases}
$$
for $n-1<\nu< n$ ($n\in\mathbb{N}$ so that $\nu>0$, also if $\nu\in\mathbb{N}$ then we define $\mathcal{D}^{\nu}_{\mathcal{I},t}$ to be the usual integer order derivative). The case of $\mathcal{I}=\mathrm{RL}$ is the Riemann--Liouville fractional derivative, whereas $\mathcal{I}=\mathrm{C}$ corresponds to Caputo's fractional derivative \cite{caputo1967linear}. We suppose that $(n_j-1)<\nu_j\leq n_j$ for $n_j\in\mathbb{N}$, that each $A_j$ represents an operator (independent of time) on a Hilbert space $\mathcal{H}$ with dense domain $\mathcal{D}(A_j)\subset\mathcal{H}$, and that $f$ is some known suitable forcing term. Our convention/notation for the Laplace transform throughout is
\begin{equation}
\label{laplace_defjkjk}
\hat g(z)=\mathcal{L}\{g\}(z):=\int_0^\infty e^{-zt}g(t)dt.
\end{equation}
Taking Laplace transforms of \eqref{abstract_form}, we formally arrive at
\begin{equation}
\label{laplace_form}
\left[\sum_{j=1}^M z^{\nu_j}A_j\right]\hat q (z)=\hat f(z)+\sum_{\mathcal{I}_j=\mathrm{C}\text{ or }\nu_j=n_j}A_j\sum_{k=1}^{n_j}z^{\nu_j-k}q^{(k-1)}(0).
\end{equation}
For the Laplace transforms of fractional derivatives, see \cite[Ch. 2]{podlubny1998fractional}.\footnote{We have implicitly assumed sufficient regularity so that the limit values of the fractional derivatives/integrals, that appear when taking Laplace transforms of Riemann--Liouville derivatives, vanish at the lower terminal $t=0$. This is also of physical importance since there is no natural interpretation (in the models we consider) for the initial values of these fractional derivatives/integrals.}
We set
\begin{equation}
\label{laplace_form2}
K(z):=\hat f(z)+\sum_{\mathcal{I}_j=\mathrm{C}\text{ or }\nu_j=n_j}A_j\sum_{k=1}^{n_j}z^{\nu_j-k}q^{(k-1)}(0),
\end{equation}
and assume that this is known. We formally define the fractional operator pencil as
$$
T(z)=\sum_{j=1}^M z^{\nu_j}A_j,
$$
which we assume for suitable complex $z$ can be extended from the initial domain $\cap_{j=1}^M\mathcal{D}(A_j)$ to a closed operator with dense domain $\mathcal{D}(T(z))$. In general, care must be taken to ensure that this is done correctly (e.g., see \cite{liu1998spectrum} for an example where $\mathcal{D}(T(z))\neq \cap_{j=1}^M\mathcal{D}(A_j)$) and we provide the required analysis for our example in \S \ref{sec:finding_spec}. We then define the generalised spectrum and resolvent set as
$$
\mathrm{Sp}\left(\{A_j\},\{\nu_j\}\right)=\overline{\{z\in\mathbb{C}: T(z)\text{ is not invertible}\}},\quad \rho\left(\{A_j\},\{\nu_j\}\right)=\mathbb{C}\backslash \mathrm{Sp}\left(\{A_j\},\{\nu_j\}\right),
$$
respectively. Finally, we invert the Laplace transform to arrive at
\begin{equation}
\label{eq:inv_lap}
q(t)=\frac{1}{2\pi i}\int_{\omega -i\infty}^{\omega+i\infty}e^{zt}\underbrace{\left[T(z)^{-1}K(z)\right]}_{\hat q (z)\in\mathcal{H}}dz,
\end{equation}
where $\omega\in\mathbb{R}$ is such that singularities of $T(z)^{-1}K(z)$ are to the left of the contour.

\subsection{Contour choice and quadrature}
\label{cont_descrip}

Our approach when computing \eqref{eq:inv_lap} is to deform the contour of integration into the left-half plane, so that the integrand decays, and apply quadrature methods. This idea can be traced back to the 1950s and Talbot's doctoral student Green \cite{green1955calculation}, as well as Butcher \cite{butcher1957numerical}. Later, Talbot published a landmark paper \cite{talbot1979accurate}, where he generalized and improved the earlier work of Green. One deforms the contour of integration into a contour $\gamma$ (parametrised by $s\in\mathbb{R}$) that begins and ends in the left half-plane, such that $\mathrm{Re}(\gamma(s))\rightarrow-\infty$ as $|s|\rightarrow\infty$. Popular contour choices include variations of Talbot's contour \cite{weideman2006optimizing,dingfelder2015improved}, parabolic contours \cite{gavrilyuk2001exponentially,weideman2019gauss,weideman2007parabolic} and hyperbolic contours \cite{lopez2006spectral,sheen2003parallel,weideman2007parabolic}. Fast and accurate integration can then be achieved by the simple trapezoidal or midpoint rules \cite{martensen1968numerischen,mcnamee1964error,stenger2012numerical} (see also \cite{weideman2019gauss} for Gauss--Hermite quadrature). In this paper, we focus on the trapezoidal rule given by
\begin{equation}
\label{bromwich_quad_example}
q(t)\approx \frac{h}{2\pi i}\sum_{j=-N}^Ne^{z_jt}\hat q(z_j)\gamma'(jh),\quad z_j=\gamma(jh).
\end{equation}
It is crucial to consider the \textit{numerical stability} of the sum in \eqref{bromwich_quad_example} \cite{lopez2004numerical}. If $t\max(\mathrm{Re}(z_j))$ is unbounded as $N\rightarrow\infty$, then the exponential terms in \eqref{bromwich_quad_example} increase and render the sum unstable. This is demonstrated in the error plots of \cite{weideman2007parabolic}, which considers optimal choices of parameters (in exact arithmetic) for parabolic and hyperbolic contours. Several methods have been developed to overcome this instability. A mechanism for providing stability in the case of Talbot contours and for Laplace transforms with singularities on the negative real axis is given in \cite{dingfelder2015improved}. The papers \cite{lopez2006spectral,colbrook2021computing} and \cite{weideman2010improved} provide stable choices of parameters for hyperbolic and parabolic contours respectively.

In the setting of the present paper, the singularities of the Laplace transform are not restricted to the negative real axis. We therefore focus on hyperbolic and parabolic contours. A hyperbolic contour is suited to the general case where the singularities of $\hat q(z)$ lie in the exterior of a translated sector (see Remark \ref{rem:poles} for a discussion of isolated poles in the interior)
\begin{equation}
\label{sec_def_n}
S_{\delta,\sigma}:=\{z\in\mathbb{C}:\mathrm{arg}(z-\sigma)<\pi-\delta\},
\end{equation}
for some $\delta\in[0,\pi/2)$ and $\sigma\in\mathbb{R}$. We parametrise hyperbolic contours as in \cite{weideman2007parabolic}:
\begin{equation}
\label{contour_choice}
\gamma(s)=\sigma+\mu(1+\sin(is-\alpha)),\quad \mu>0,\quad 0<\alpha<\frac{\pi}{2}-\delta.
\end{equation}
An example contour is shown in Figure \ref{fig:cont_examples} (left). Since it is beneficial to reuse the computed resolvents at different times, we consider computing $u(t)$ for $t\in[t_0,t_1]$ where $0<t_0\leq t_1$. In \cite{colbrook2021computing}, the parameters were optimised, under the assumption that $\gamma(0)t_1=\mu t_1(1-\sin(\alpha))\leq\beta$ for some $\beta>0$, as $N\rightarrow\infty$. This extra free parameter $\beta$ controls the maximum size of the exponential terms in the sum \eqref{bromwich_quad_example}. In the case of $\sigma\leq 0$, this is clear. In the case of $\sigma>0$, there is an unavoidable intrinsic instability of the problem as $t\rightarrow\infty$. However, one can still obtain stable bounds for \textit{relative} errors for arbitrary $t$ and stable bounds for absolute errors for bounded $t$. For all of the examples in this paper, $\sigma=0$.

\begin{figure}[t!]
  \centering
  \begin{minipage}[b]{0.48\textwidth}
    \begin{overpic}[width=\textwidth,trim={32mm 92mm 32mm 92mm},clip]{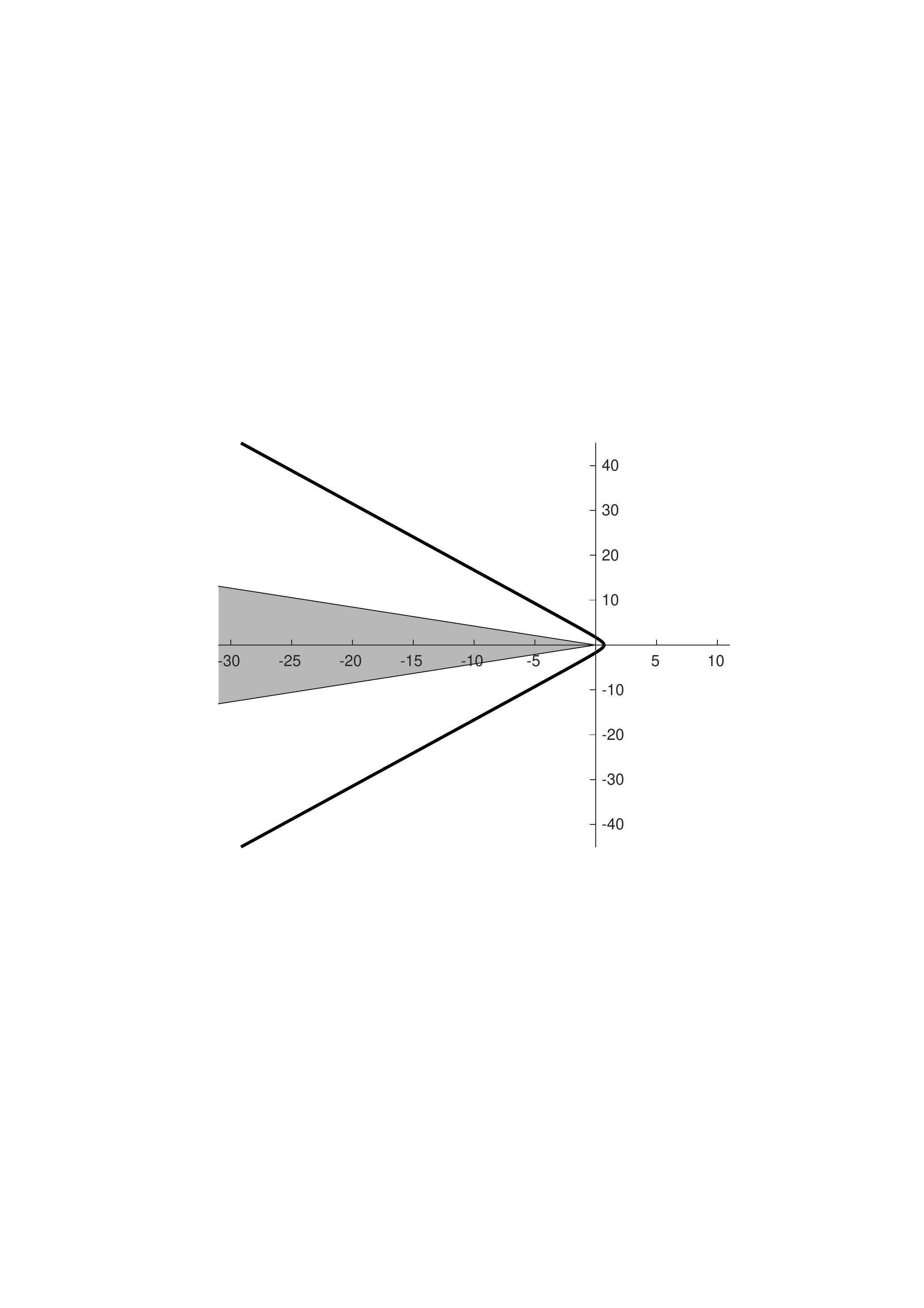}
		\put (47,59) {$\displaystyle \gamma$}
		\put (18,43) {$\displaystyle S_{\delta,0}^{c}$}
		\put (57,12) {$\displaystyle \mathrm{Im}(z)$}
		\put (82,43) {$\displaystyle \mathrm{Re}(z)$}
     \end{overpic}
  \end{minipage}
  \hfill
  \begin{minipage}[b]{0.48\textwidth}
    \begin{overpic}[width=\textwidth,trim={32mm 92mm 32mm 92mm},clip]{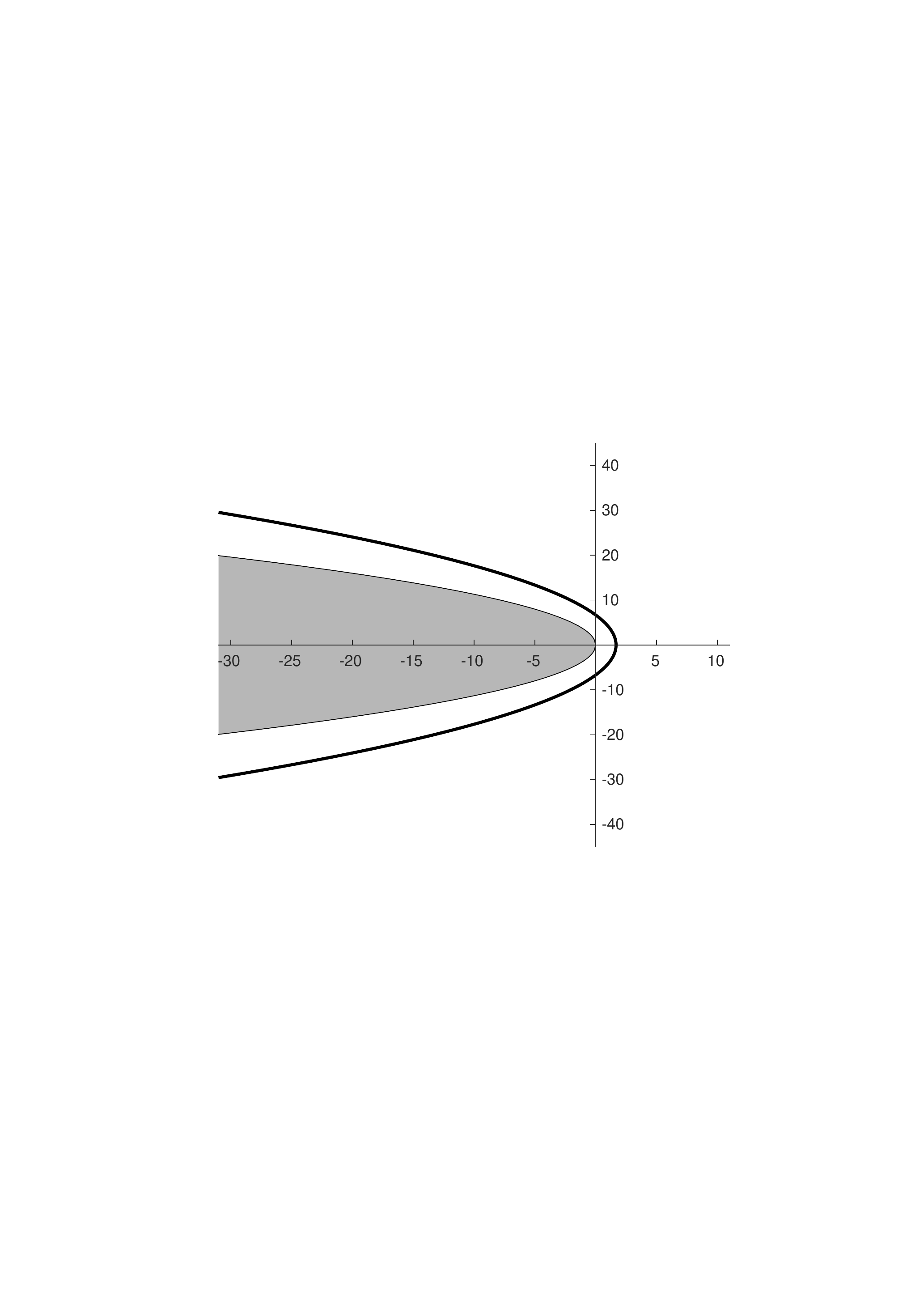}
		\put (47,57) {$\displaystyle \gamma$}
		\put (20,45) {$\displaystyle P_{\delta,0}^{c}$}
		\put (57,12) {$\displaystyle \mathrm{Im}(z)$}
		\put (82,43) {$\displaystyle \mathrm{Re}(z)$}
		\end{overpic}
  \end{minipage}
  \caption{Left: Example sector (complement of $S_{\delta,0}$) and contour $\gamma$ (using Algorithm 1) for $\delta=0.4$. Right:  Example parabolic region (complement of $P_{\delta,0}$) and contour $\gamma$ (using Algorithm 2) for $\delta=0.05$.}
\label{fig:cont_examples}
\end{figure}

The algorithm is summarised in Algorithm \ref{alg:spec_meas} and the following error bound was proven in \cite[Thm. 5.1]{colbrook2021computing}\footnote{This theorem was stated for analytic semigroups and $\sigma=0$, but the proof carries over with minor modifications.} for $t\in[t_0,t_1]$ as $N\rightarrow\infty$:
\begin{equation}
\label{analytic_bound}
\begin{split}
e^{-\sigma t}\left\| q(t)-{q}_N(t)\right\|&\leq  \underbrace{\left({2\mu e^{\frac{\beta}{1-\sin(\alpha)}}}{\pi^{-1}}   \int_0^\infty  e^{x-\mu t\sin(\alpha)\cosh(x)}dx\right) \eta}_{\text{error due to inexact computation of $\hat q(z)$}}\\& +\underbrace{C e^{\frac{\beta}{1-\sin(\alpha)}}\cdot\exp\left(-\frac{N\pi(\pi-2\delta)/2}{\log\left(\Lambda  \frac{\sin(\pi/4-\delta/2)^{-1}-1}{\beta}N\pi(\pi-2\delta)\right)}\right)}_{\text{quadrature error}},
\end{split}
\end{equation}
where $C$ denotes a constant (that in general depends on bounds on $\|\hat q(z)\|$ in suitable regions of the complex plane), $\|\cdot\|$ denotes the norm of $\mathcal{H}$ and $\eta$ denotes an error tolerance for computing $\hat q(z)$. The first error term corresponds to the approximation of $\hat q(z)$, whereas the second corresponds to the quadrature error of the integral of the inverse Laplace transform.

\begin{algorithm}[t]
\textbf{Input:} $\hat q$ (with singularities in the exterior of $S_{\delta,\sigma}$), $0<t_0\leq t_1<\infty$, $\beta>0$, $N\in\mathbb{N}$ and $\eta>0$. \\
\vspace{-4mm}
\begin{algorithmic}[1]
\STATE Let $\gamma$ be defined as in \eqref{contour_choice}, with $\mu, h$ and $\alpha$ given by:
\begin{align*}
\mu&=(1-\sin(({\pi-2\delta})/{4}))^{-1}\beta/t_1,\\
h&=\frac{1}{N}W\left(N\frac{\pi(\pi-2\delta)}{t_0\mu \sin\left(\frac{\pi-2\delta}{4}\right)}\right)=\frac{1}{N}W\left(\Lambda N\frac{\pi(\pi-2\delta)}{\beta \sin\left(\frac{\pi-2\delta}{4}\right)}\left(1-\sin\left(\frac{\pi-2\delta}{4}\right)\right)\right),\\
\alpha&=({h\mu t_1+\pi^2-2\pi\delta})/({4\pi}),
\end{align*}
where $\Lambda=t_1/t_0$ and $W$ denotes the principal branch of the Lambert $W$ function.
\STATE Set $z_j=\gamma(jh)$ and $w_j=\frac{h}{2\pi i}\gamma'(jh)$.
\STATE Compute $v_j=\hat q(z_j)$ for $-N\leq j\leq N$ to an accuracy $\eta$. In our setting, this is achieved by approximating $T(z_j)^{-1}K(z_j)$.
\end{algorithmic} \textbf{Output:} ${q}_N(t)=\sum_{j=-N}^Ne^{z_jt}w_jv_j$ for $t\in[t_0,t_1]$.
\caption{Optimal stable parameter selection for hyperbolic contour based on \cite{colbrook2021computing}.}\label{alg:spec_meas}
\end{algorithm}

In practice, to compute the Laplace transform $\hat q(z)$ to a given accuracy $\eta$, we can use bounds on $\|T(z)^{-1}\|$ along the contour of integration (see \S \ref{sec:solve_desc}), so it is important that $\|T(z)^{-1}\|$ is not too large along the contour. Moreover, to ensure good convergence properties as $N\rightarrow\infty$, it is important that the contour parameters are chosen based on regions in the complex plane where $\|T(z)^{-1}\|$ is suitably bounded. In the special case that $T(z)$ is a linear operator pencil of the form $zI-A$ (for operator $A$), this corresponds to bounding the pseudospectrum\footnote{Recall that for $\epsilon>0$, the pseudospectrum, $\mathrm{Sp}_{\epsilon}(A)$, is the closure of the set of points where $\|(A-zI)^{-1}\|^{-1}<\epsilon$.} of $A$. Examples of pseudospectral analysis performed for contour integral methods include \cite{in2011contour,weideman2010improved,guglielmi2020numerical,guglielmi2020pseudospectral}. In our case, we bound $\|T(z)^{-1}\|$ in Theorem \ref{thm:RES_BOUND} (see Figure \ref{fig:pseudospectra} for some examples) through the analysis of a quasi-linearised operator pencil defined in Proposition \ref{prop:quasi_lin}. In certain parameter regimes and for $z$ reasonably close to the imaginary axis, the relevant sets can be bounded by parabolae. For points $z_j$ with $\mathrm{Re}(z_jt)\ll 0$, the contribution to the sum in \eqref{bromwich_quad_example} is negligible and can be safely neglected. We therefore also consider parabolic contours.

In the case that the singularities of $\hat q(z)$ lie in the exterior of
$$
P_{\delta,\sigma}:=\left\{z\in\mathbb{C}:\mathrm{Re}(z)>\sigma-\delta \cdot \mathrm{Im}(z)^2\right\},
$$
for $\sigma\in\mathbb{R}$ and $\delta>0$, we parametrise parabolic contours as in \cite{in2011contour}:
\begin{equation}
\label{contour_choice_parab}
\gamma(s)=\sigma-\frac{1}{4\delta}+\mu(1+is)^2,\quad \mu>\frac{1}{4\delta}.
\end{equation}
An example contour is shown in Figure \ref{fig:cont_examples} (right). Following \cite{in2011contour}, there are three competing error terms in exact arithmetic:
\begin{align*}
E_1=\mathcal{O}\left(e^{\sigma t-\frac{2\pi}{h}\left(1-\frac{1}{2\sqrt{\mu \delta}}\right)}\right),\quad E_2=\mathcal{O}\left(e^{\sigma t-\frac{t}{4\delta}-\frac{\pi^2}{\mu t h^2}+\frac{2\pi}{h}}\right),\quad E_3=\mathcal{O}\left(e^{\sigma t-\frac{t}{4\delta}+\mu t(1-(hN)^2)}\right).
\end{align*}
The first two correspond to discretisation errors of the integral, whereas the third corresponds to a truncation error of the Trapezoidal rule to a finite sum. To model errors in inexact arithmetic (and ensure stability), we follow \cite{weideman2010improved} and consider a fourth error term given by
$$
E_4=\mathcal{O}\left(e^{\sigma t-\frac{t}{4\delta}+\mu t +\log(\eta)}\right).
$$
The optimal parameters are then selected according to Algorithm \ref{alg:spec_meas2}. The optimisation problem in \eqref{parab_opt_prob} can be efficiently approximately solved using MATLAB's \texttt{fminsearch}, for example.

\begin{algorithm}[t]
\textbf{Input:} $\hat q$ (with singularities in the exterior of $P_{\delta,\sigma}$), $0<t_0\leq t_1<\infty$, $N\in\mathbb{N}$ and $\eta>0$. \\
\vspace{-4mm}
\begin{algorithmic}[1]
\STATE Let $\gamma$ be defined as in \eqref{contour_choice_parab}, where $h$ and $\mu$ are selected as solutions of
\begin{equation}
\label{parab_opt_prob}
\hspace{-4mm}\min_{\substack{h>0\\\mu>1/(4\delta)}} \max_{t\in\{t_0,t_1\}}\left\{-\frac{2\pi}{h}\left(1-\frac{1}{2\sqrt{\mu \delta}}\right),-\frac{t}{4\delta}-\frac{\pi^2}{\mu t h^2}+\frac{2\pi}{h},-\frac{t}{4\delta}+\mu t(1-(hN)^2),-\frac{t}{4\delta}+\mu t +\log(\eta)\right\}.
\end{equation}
\STATE Set $z_j=\gamma(jh)$ and $w_j=\frac{h}{2\pi i}\gamma'(jh)$.
\STATE Compute $v_j=\hat q(z_j)$ for $-N\leq j\leq N$ to an accuracy $\eta$. In our setting, this is achieved by approximating $T(z_j)^{-1}K(z_j)$.
\end{algorithmic} \textbf{Output:} ${q}_N(t)=\sum_{j=-N}^Ne^{z_jt}w_jv_j$ for $t\in[t_0,t_1]$.
\caption{Optimal stable parameter selection for parabolic contour based on \cite{in2011contour,weideman2010improved}.}\label{alg:spec_meas2}
\end{algorithm}

\begin{remark}[Dealing with singularities of $\hat f(z)$]\label{rem:poles}
If the singularities of $T(z)^{-1}(K(z)-\hat f(z))$ lie in a region of the complex plane suited to the above analysis but $\hat f(z)$ has additional poles, we can isolate the contributions from the poles to the right of the contour $\gamma$ by Cauchy's residue theorem. This is done in the examples in \S \ref{sec:solve_desc} and \S \ref{sec:phys_res}.
\end{remark}

\begin{remark}[Oscillatory integrals]
The contours $\gamma$ that we use extend into the left-half-plane so that the integrand decays exponentially. However, if large portions of the contour are close to a vertical line of constant real part, a portion of the integral becomes oscillatory for large times. In this case, it may be beneficial to use quadrature methods for oscillatory integrals \cite{deano2017computing} for that part of the integral. A comparison between the plain trapezoidal rule and a hybrid trapezoidal-oscillatory quadrature rule is beyond the scope of this paper but merits further investigation.
\end{remark}

\subsection{Causality and the computation of $\hat f(z)$}

In the above, we have implicitly assumed access to $\hat f(z)$. In the examples of this paper, one can analytically compute the Laplace transform of the forcing term (which is needed for $K(z)$). In general, however, this must be done numerically. Suppose that we wish to evaluate the solution at time $t<t_1$. Then the part of the solution in \eqref{eq:inv_lap} corresponding to the forcing term can be written as
\begin{equation}
\label{forcing_split}
\frac{1}{2\pi i}\int_{\omega-i\infty}^{\omega+i\infty}e^{zt}T(z)^{-1}[\hat{f_1}(z)+\hat{f_2}(z)]\, dz,
\end{equation}
where we write
$$
f(s)=f_1(s)+f_2(s)
$$
for $f_1$ supported on $[0,t_1]$ and $f_2$ supported on $(t_1,+\infty)$. The contribution of $\hat{f_1}$ in \eqref{forcing_split} encodes the forcing up to time $t_1$. We can use quadrature methods to compute $\hat{f_1}(z)$, noting that the integration in \eqref{laplace_defjkjk} is now over a finite time interval. The contribution of $\hat{f_2}$ in \eqref{forcing_split} may initially seem troubling to the reader since it includes the forcing at future times. Fortunately this part vanishes, as it must due to causality, as we now argue. One has
$$
\frac{1}{2\pi i}\int_{\omega-i\infty}^{\omega+i\infty}e^{zt}T(z)^{-1}\hat{f_2}(z)\, dz=\frac{1}{2\pi i}\int_{\omega-i\infty}^{\omega+i\infty}e^{z(t-t_1)}T(z)^{-1}\int_{0}^\infty e^{-zs}{f_2}(t_1+s)\,ds\, dz,
$$
where the equality comes from a change of variables in the integral definition of $\hat{f_2}(z)$. Since $t<t_1$, the integrand decays exponentially as $\mathrm{Re}(z)\rightarrow \infty$. This allows us to deform in the right-half plane and see that the above integral vanishes. Similarly, one can argue that only $f(s)$ up to $s=t$ contributes to $q(t)$ in \eqref{eq:inv_lap}.

\section{Fractional beam equations}
\label{sec:three}

In this section, we introduce the fractional beam equations studied in this paper. After describing the physical setup, we analyse the relevant fractional pencil $T(z)$ (defined in \eqref{def:T}) via a quasi-linearisation (Proposition \ref{prop:quasi_lin}). Linearisation is a common tool employed when studying polynomial operator pencils \cite{liu1998spectrum} \cite[Chapter VI]{engel1999one}. However, in our case, the fractional power $z^\nu$ in \eqref{eq:beam_pencil} only allows a linearisation of the dominant quadratic term (and hence we cannot reduce the time evolution problem to a semigroup). Proposition \ref{prop:quasi_lin} leads to results describing suitable $z$ for which the inverse $T(z)^{-1}$ exists and suitable bounds on its norm (Theorem \ref{thm:RES_BOUND}). With this information in hand, we discuss the choice of contour and method of computing $T(z)^{-1}$. We end this section with an example showing numerical convergence, which follows rigorously from Algorithm \ref{alg:spec_meas}, the bound \eqref{analytic_bound}, and the convergence of the used spectral method.

\subsection{Physical setup}
\label{sec:phys_setup}

Constituent equations determining the stress-strain relationship for viscoelastic beams rely on the inclusion of vibrational-damping terms. Arguably, the most well-known damped stress-strain relation is the Kelvin--Voigt equation \cite{Kelvin,Voigt}, applicable for beams undergoing small deformations (governed by Euler--Bernoulli beam theory). Since its conception as a simple constitutive model, the Kelvin--Voigt equation has been modified by the inclusion of time-fractional derivatives to better represent the relaxation behaviour observed in viscoelastic materials \cite{KV_AIAA,KV1,KV2}. The use of fractional derivatives in these adapted constituent equations enables key physical features of the material to be modelled up to the desired accuracy (with additional fractional derivatives and scaling parameters potentially permitting greater accuracy) \cite{Pritz}. Therefore, the ability to solve these equations swiftly and accurately is crucial to a range of vibration problems.

As discussed in \S \ref{sec:intro}, typical solutions of fractional derivative equations are computed in the time domain and can suffer from numerical convergence issues. For example, a frequency-domain approach \cite{Xu2020} is posed to rectify some of these issues. However, its accuracy is unclear (with a 1.2\% error quoted in the paper). In contrast, our method can attain a pre-specified degree of accuracy, suitable for fractional constituent relations and variable beam parameters (e.g., variable mass). Thus, this approach can ensure the predicted vibrations of the structure are at least as accurate as whatever the chosen constituent equation is.

\begin{figure}[t!]
  \centering
    \begin{overpic}[width=0.8\textwidth]{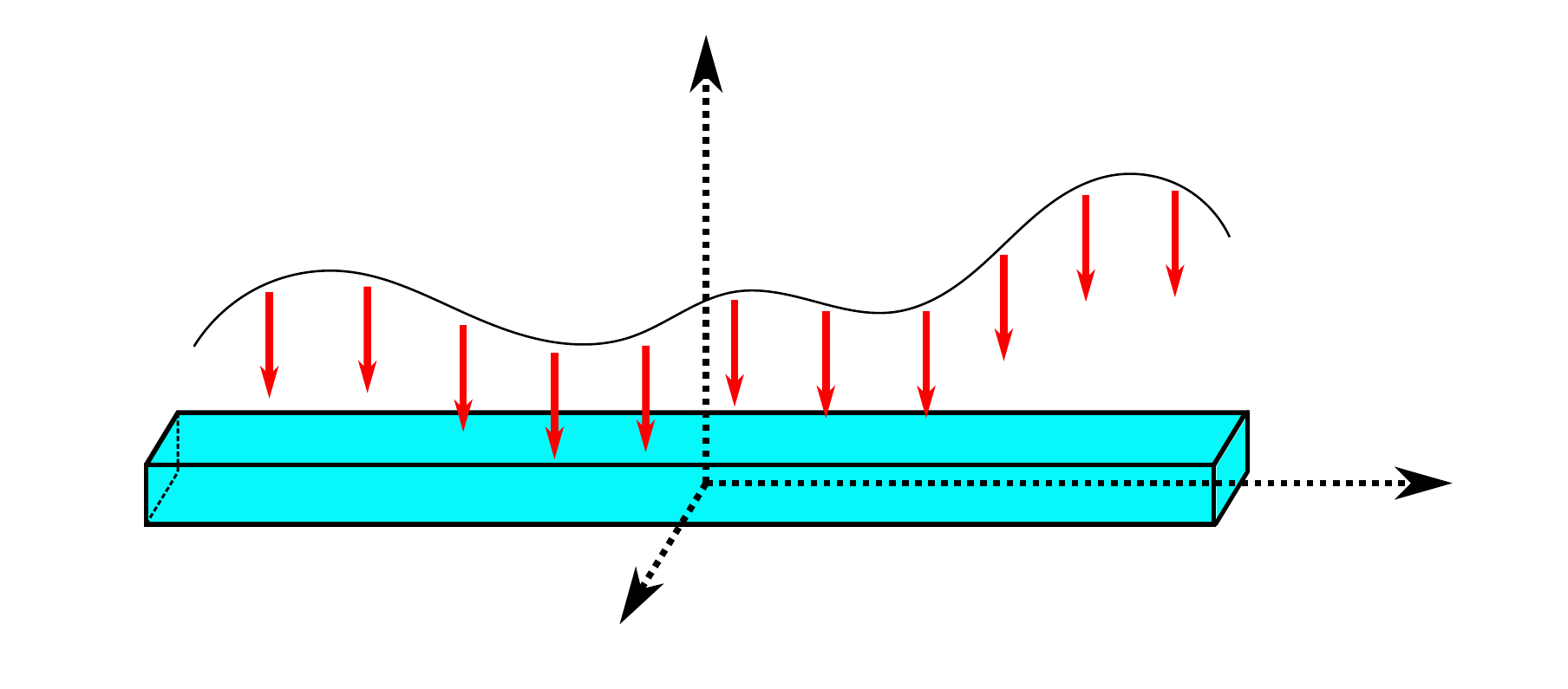}
		\large
		\put (82,9) {$\displaystyle x$}
		\put (42,35) {$\displaystyle y$}
		\put (37,6) {$\displaystyle z$}
		\put (75,35) {$\displaystyle F(x,t)$}
		\normalsize
     \end{overpic}
    \caption{Physical setup of the model beam (shaded) in this paper.}
\label{fig:beam}
\end{figure}

We consider the following viscoelastic constituent equation on a beam depicted in Figure \ref{fig:beam}, with the beam extending lengthwise between $x=-1$ and $x=1$,
\begin{equation}
\label{phsysics_needed101}
\sigma(x,z,t)=E_{0}(x)\epsilon(x,z,t)+E_{1}(x)\mathcal{D}^{\nu}_{\mathcal{I},t}\epsilon(x,z,t),
\end{equation}
where $\sigma$ indicates the normal stresses, $\epsilon$ is the axial strain, $E_{0}$ is the Young's modulus of the beam, and $E_{1}$ and $\nu\in(0,2)$ are the experimentally determined viscoelaticity parameters. We have implicitly non-dimensionalised by a lengthscale (based on half the total beam-length) when considering $x\in[-1,1]$. Further non-dimensionalisation will take place later in Section \ref{sec:phys_res}. For the physical model, we consider the Caputo derivative $\mathcal{I}=\mathrm{C}$ but note that the analysis of \S \ref{sec:finding_spec} holds for the Riemann--Liouville definition of the fractional derivative as well. In particular, the operator pencil does not change, but the right-hand sides of the resulting linear systems when we apply quadrature do.

Under a transverse loading, $F(x,t)$, the transverse displacement, $y(x,t)$, of the beam satisfies
$$
\pderiv{^{2}M}{x^{2}}+\underbrace{\rho(x)A(x)}_{:=\tilde\rho(x)}\pderiv{^{2}y}{t^{2}}=F(x,t),
$$
where $\rho$ is the density of the beam, $A$ is its cross-sectional area, and $M$ is the lateral moment
$$
M(x,t)=-\int_{A(x)} z\sigma(x,z,t)dA.
$$
By assuming small deformations, the axial strain can be approximated by 
$$
\epsilon(x,z,t)=-z\pderiv{^{2}y}{x^{2}}
$$
and hence, using \eqref{phsysics_needed101}, we arrive at the governing equation for the transverse displacement:
\begin{equation}
    \tilde\rho(x) \pderiv{^{2}y}{t^{2}}+\pderiv{^{2}}{x^{2}}\left[E_{0}(x)I(x)\pderiv{^{2}y}{x^{2}}+E_{1}(x)I(x)\mathcal{D}^{\nu}_{\mathcal{I},t}\pderiv{^{2}y}{x^{2}} \right]=F(x,t),\quad (x,t)\in[-1,1]\times[0,\infty).
    \label{eq:gov}
\end{equation}
Here, $I(x)$ denotes the moment of inertia of the cross-section of the beam,
$$
I(x)=\int_{A(x)}z^{2}dA.
$$
End conditions on the beam supplement the governing equation. For simplicity, we shall consider the following typical conditions applied at an end $x_{0}$: a clamped end corresponding to
$$
y(x_{0},t)=\pderiv{y}{x}(x_{0},t)=0,
$$
or a simply supported end corresponding to
$$
y(x_{0},t)=\pderiv{^{2}y}{x^{2}}(x_{0},t)=0.
$$
A mixture of the conditions is also suitable.

\begin{remark}[Generalisations of the model]
Different types of boundary conditions can also be dealt with similarly. However, an analysis similar to that of \S \ref{sec:finding_spec} must be performed to bound the generalised spectrum. Since the methodology presented here is suitable for any number of fractional derivatives, additional fractional parameters of the form $E_{j}(x)\mathcal{D}^{\nu_j}_{\mathcal{I}_j,t}\epsilon(x,z,t)$ could also be included into the stress-strain relation to match experimental data, as suggested by Pritz \cite{Pritz}. Again, in this case, the bounds on the generalised spectrum would need to be performed similarly to \S \ref{sec:finding_spec}. For conciseness, we restrict our attention in this paper to a single fractional derivative.
\end{remark}

\subsection{Generalised spectrum and generalised resolvent of fractional pencil}
\label{sec:finding_spec}

The general form of the governing equation in \S \ref{sec:phys_setup} can be written as 
\begin{equation}
\label{NL_gen_form}
    \pderiv{^{2}y}{t^{2}}+\frac{1}{\tilde\rho(x)}\pderiv{^{2}}{x^{2}}\left[a(x)\pderiv{^{2}y}{x^{2}}+b(x)\mathcal{D}^{\nu}_{\mathcal{I},t}\pderiv{^{2}y}{x^{2}} \right]=\frac{F(x,t)}{\tilde\rho(x)},
\end{equation}
where we assume that $a,b$ and $\tilde\rho$ are continuous and strictly positive on the domain $[-1,1]$. We also have $\nu\in(0,2)$. For simplicity, we shall also assume that $a$ and $b$ are smooth, though weaker assumptions can also be taken in what follows. However, from a numerical viewpoint, this is also a practical consideration. To capture the boundary conditions, we define three suitable Hilbert spaces that are used in the analysis below. The reason for considering these three Hilbert spaces will become clear when we define a quasi-linearisation of the Laplace transform of \eqref{NL_gen_form} in Proposition \ref{prop:quasi_lin}. The operator $\mathcal{A}(z)$ appearing in Proposition \ref{prop:quasi_lin} will be used to bound the generalised spectrum of a suitable fractional pencil $T(z)$.

\subsubsection{The setup}

We define $\mathcal{H}^4_{\mathrm{BC}}$ as the subspace of all $u\in H^4(-1,1)$ that satisfy the boundary conditions of $y(\cdot,t)$ (for a fixed time). Recall that $H^m(-1,1)$ denotes the space of $L^2(-1,1)$ functions whose weak derivatives up to order $m$ exist and lie in $L^2(-1,1)$ \cite{evans2010partial}. Moreover, by the Sobolev embedding theorems \cite[e.g., Section 5.6.3]{evans2010partial}, for any $m\in\mathbb{N}$, any $u\in H^m(-1,1)$ also lies in $C^{m-1,1/2}([-1,1])$ and hence the boundary conditions are well-defined in the classical sense. We define $\mathcal{H}^2_{\mathrm{BC1}}$ as the closed subspace of $H^2(-1,1)$ consisting of all $u$ that satisfy the boundary conditions of $y(\cdot,t)$ (for a fixed time) that involve derivatives up to and including first order. The fact that $\mathcal{H}^2_{\mathrm{BC1}}$ is closed and hence a Hilbert space follows from the Sobolev embedding theorems that bound pointwise evaluations of $u$ and $u'$ in terms of the $H^2$ norm of $u$. Furthermore, by using the boundary conditions, an application of Poincaré's inequality and integration by parts, we have that if $u\in\mathcal{H}^2_{\mathrm{BC1}}$, then
$$
\int_{-1}^1|u(x)|^2+|u'(x)|^2dx\lesssim \int_{-1}^1 a(x) u''(x)\overline{u''(x)}dx.
$$
Since $a$ is strictly positive and bounded, we have
$$
\int_{-1}^1 u''(x)\overline{u''(x)}dx\lesssim  \int_{-1}^1 a(x) u''(x)\overline{v''(x)}dx\lesssim \int_{-1}^1 u''(x)\overline{u''(x)}dx.
$$
It follows that we can equip $\mathcal{H}^2_{\mathrm{BC1}}$ with the equivalent inner product
$$
\left\langle u,v \right\rangle_a:=\int_{-1}^1 a(x) u''(x)\overline{v''(x)}dx.
$$
Finally, we define $\mathcal{H}^2_{\mathrm{BC2}}$ as the subspace of $H^2(-1,1)$ consisting of all $u$ that satisfy the boundary conditions of $y(\cdot,t)$ (for a fixed time) that involve derivatives of second order, but with $y''$ replaced by $u$. In other words, if we are given the boundary condition $y''(x_0,t)=0$, then this corresponds to $u(x_0)=0$ for $u\in\mathcal{H}^2_{\mathrm{BC2}}$.

We consider the abstract setup of \S \ref{sec:gen_abstract} with $q(t)=y(\cdot,t)$ and the Hilbert space $\mathcal{H}=L^2_{\tilde\rho}(-1,1)$ equipped with the weighted inner product
$$
\langle y_1,y_2 \rangle_{\tilde\rho}:=\int_{-1}^1 \tilde\rho(x)y_1(x)\overline{y_2(x)}dx.
$$
To transform \eqref{NL_gen_form} into \eqref{abstract_form}, we need to define the suitable operators on $L^2_{\tilde\rho}(-1,1)$. We first set $\nu_1=2$ and let $A_1$ be the identity operator. The operator $A_2=A$ is then defined by $Ay =(ay'')''/{\tilde\rho}$ with $\mathcal{D}(A)=\mathcal{H}^4_{\mathrm{BC}}$. Similarly, $A_3=B$ is defined by $By =(by'')''/{\tilde\rho}$ with $\mathcal{D}(B)=\mathcal{H}^4_{\mathrm{BC}}$. Finally, we set $\nu_2=0$ and $\nu_3=\nu\in(0,2)$. We therefore consider the \textit{initial} fractional pencil:
\begin{equation}
\label{eq:beam_pencil}
[S(z)]u=z^2u +(au''+z^\nu b u'')''/{\tilde\rho},\quad \mathcal{D}(S(z))=\mathcal{H}^4_{\mathrm{BC}},
\end{equation}
where the branch cut for the fractional power is taken to be $\mathbb{R}_{\leq 0}$. The following simple Proposition is based on a standard argument used in the analysis of PDEs and shows that $S(z)$ is closable. However, a key difference with the usual elliptic PDE theory is that for arbitrary $z\in\mathbb{C}\backslash\mathbb{R}_{\leq 0}$ and $\nu\in(0,2)$, $S(z)$ is not necessarily a closed operator. For example, for some parameter choices $a+z^\nu b$ could vanish at points in $[-1,1]$.

\begin{proposition}
For any $z\in\mathbb{C}\backslash\mathbb{R}_{\leq 0}$, the operator $S(z)$ defined in \eqref{eq:beam_pencil} is closable.
\end{proposition}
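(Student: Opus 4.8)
The plan is to show closability by exhibiting a densely-defined adjoint, or equivalently by verifying that the graph of $S(z)$ has a closure that is still the graph of an operator; concretely, I would check that whenever $u_n \to 0$ in $\mathcal{H} = L^2_{\tilde\rho}(-1,1)$ and $S(z)u_n \to g$ in $\mathcal{H}$, then $g = 0$. First I would unpack $S(z)u_n = z^2 u_n + (a u_n'' + z^\nu b u_n'')''/\tilde\rho = z^2 u_n + ((a + z^\nu b) u_n'')''/\tilde\rho$. Since $z^2 u_n \to 0$ in $\mathcal{H}$ and $\tilde\rho$ is continuous and strictly positive (hence bounded above and below), it is equivalent to show: if $u_n \to 0$ in $L^2$ and $\big((a+z^\nu b)u_n''\big)'' \to \tilde\rho g =: h$ in $L^2$, then $h = 0$.

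The natural route is to test against smooth compactly supported functions and integrate by parts, using that all $u_n$ lie in $\mathcal{H}^4_{\mathrm{BC}} \subset H^4(-1,1)$ so all the integrations by parts are legitimate with no boundary terms when the test function $\varphi \in C_c^\infty(-1,1)$. For such $\varphi$,
\[
\int_{-1}^1 \big((a+z^\nu b)u_n''\big)'' \,\overline{\varphi}\,dx = \int_{-1}^1 (a+z^\nu b) u_n'' \,\overline{\varphi''}\,dx = \int_{-1}^1 u_n \,\overline{\big((\overline{a+z^\nu b})\,\varphi''\big)''}\,dx.
\]
Here I use that $a, b$ are smooth, so $(\overline{a+z^\nu b})\varphi'' \in C_c^\infty(-1,1)$ and its second derivative is a bounded $L^2$ function. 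The left side converges to $\int h\overline{\varphi}$ by the assumed $L^2$ convergence, while the right side converges to $0$ because $u_n \to 0$ in $L^2$ and the fixed function $\big((\overline{a+z^\nu b})\varphi''\big)''$ is in $L^2$. Hence $\int_{-1}^1 h\,\overline{\varphi}\,dx = 0$ for all $\varphi \in C_c^\infty(-1,1)$, so $h = 0$, giving $g = 0$ and establishing closability.

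I would also note at the outset that $S(z)$ is densely defined: $\mathcal{H}^4_{\mathrm{BC}}$ contains $C_c^\infty(-1,1)$ (the boundary conditions considered — clamped or simply supported — are all homogeneous and satisfied trivially by compactly supported functions), which is dense in $L^2_{\tilde\rho}(-1,1)$, so the notion of closure is meaningful. The computation above does not require any invertibility of $a + z^\nu b$ and so goes through for every $z \in \mathbb{C}\setminus\mathbb{R}_{\leq 0}$; this is exactly why one only gets closability, not closedness, as the paragraph preceding the statement emphasizes. I do not anticipate a serious obstacle here: the only mild care needed is bookkeeping with the $\tilde\rho$ weight and confirming that the smoothness hypotheses on $a, b$ (and positivity of $\tilde\rho$) are precisely what make the transpose differential operator map $C_c^\infty$ into $L^2$; the argument is otherwise the standard "distributional test function" proof that formally self-adjoint-type differential operators with smooth coefficients are closable.
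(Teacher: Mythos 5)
Your argument is correct and is essentially the paper's: both reduce closability to the same double integration by parts against a dense class of test functions. The paper phrases it as density of $\mathcal{D}(S(z)^*)$, testing against $v\in\mathcal{H}^4_{\mathrm{BC}}$ (where the boundary conditions kill the boundary terms), while you verify the equivalent sequential graph-closure criterion with $\varphi\in C_c^\infty(-1,1)$; the two formulations are interchangeable and your choice of compactly supported test functions merely sidesteps the boundary-term bookkeeping.
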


\begin{proof}
Let $v\in L^2_{\tilde\rho}(-1,1)$ and $u\in\mathcal{D}(S(z))$. Then
$$
\langle S(z)u,v\rangle_{\tilde\rho}=z^2 \langle u,v\rangle_{\tilde\rho}+\int_{-1}^1 [(au''+z^\nu bu'')''](x)\overline{v(x)}dx.
$$
If $v\in \mathcal{H}^4_{\mathrm{BC}}$, then a simple integration by parts shows that
$$
\langle S(z)u,v\rangle_{\tilde\rho}=z^2 \langle u,v\rangle_{\tilde\rho}+\int_{-1}^1 u(x)\overline{[(av''+\overline{z^{\nu}} bv'')''](x)}dx.
$$
It follows that $v\in\mathcal{D}(S(z)^*)$. Hence $S(z)^*$ is densely defined and so $S$ is closable.
\end{proof}

In what follows, we work with the operator
\begin{equation}
\label{def:T}
T(z):=\mathrm{cl}(S(z))=[S(z)]^{**},
\end{equation}
where $\mathrm{cl}$ denotes taking the closure of the closable operator, and $S(z)$ is the operator defined in \eqref{eq:beam_pencil}.

\subsubsection{Quasi-linearisation}

To study the inverse of $T(z)$ and determine when the inverse exists, we first linearise the quadratic term via the following proposition. This quasi-linearisation also has the benefit of making it considerably easier to prove that the relevant operator, $\mathcal{A}(z)$, is closed.

\begin{proposition}
\label{prop:quasi_lin}
Consider the product space $\mathcal{H}^2_{\mathrm{BC1}}\times L^2_{\tilde\rho}(-1,1)$ equipped with the inner product
\begin{equation}
\label{product_inner_product}
\left\langle (u_0,u_1),(v_0,v_1) \right\rangle=\int_{-1}^1 a(x) u_0''(x)\overline{v_0''(x)}dx+\int_{-1}^1\tilde\rho(x)u_1(x)\overline{v_1(x)}dx.
\end{equation}
For $z\in\mathbb{C}\backslash\mathbb{R}_{\leq 0}$, consider the following operator
\begin{equation*}
\begin{split}
[\mathcal{A}(z)]\begin{pmatrix}
u_0, u_1
\end{pmatrix}
&=z\begin{pmatrix}
u_0,u_1
\end{pmatrix}+\begin{pmatrix}
-u_1,\frac{1}{\tilde\rho}(au_0''+z^{\nu-1}bu_1'')''
\end{pmatrix},\\
\mathcal{D}(\mathcal{A}(z))&=\left\{(u_0,u_1)\in\mathcal{H}^2_{\mathrm{BC1}}\times\mathcal{H}^2_{\mathrm{BC1}}:au_0''+z^{\nu-1}bu_1''\in\mathcal{H}^2_{\mathrm{BC2}}\right\}.
\end{split}
\end{equation*}
The operator $\mathcal{A}(z)$ is closed. Furthermore, if $z$ is such that $\mathcal{A}(z)$ is invertible and $a+z^\nu b$ is non-vanishing on $[-1,1]$, then $S(z)$ is closed (so that $S(z)=T(z)$) and $T(z)$ is invertible with
\begin{equation}
\label{res_form_quasi}
[\mathcal{A}(z)]^{-1}\begin{pmatrix}
0, v
\end{pmatrix}=\begin{pmatrix}
[T(z)]^{-1}v,z[T(z)]^{-1}v
\end{pmatrix},\quad \forall v\in L^2_{\tilde\rho}(-1,1).
\end{equation}
\end{proposition}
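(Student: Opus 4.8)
The plan is to prove the three claims in turn: closedness of $\mathcal{A}(z)$, closedness of $S(z)$ under the stated non-vanishing hypothesis, and the resolvent identity \eqref{res_form_quasi}, using the quasi-linearisation to transfer regularity information between the scalar pencil $T(z)$ and the matrix operator $\mathcal{A}(z)$.

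\textbf{Step 1: $\mathcal{A}(z)$ is closed.} I would take a sequence $(u_0^{(n)},u_1^{(n)})\in\mathcal{D}(\mathcal{A}(z))$ with $(u_0^{(n)},u_1^{(n)})\to(u_0,u_1)$ in $\mathcal{H}^2_{\mathrm{BC1}}\times L^2_{\tilde\rho}$ and $[\mathcal{A}(z)](u_0^{(n)},u_1^{(n)})\to(g_0,g_1)$. The first component of $\mathcal{A}(z)$ reads $zu_0-u_1$, so convergence of the first component forces $u_1^{(n)}=zu_0^{(n)}-([\mathcal{A}(z)](u_0^{(n)},u_1^{(n)}))_0 + (\text{error}) \to zu_0-g_0$ in $\mathcal{H}^2_{\mathrm{BC1}}$; in particular $u_1\in\mathcal{H}^2_{\mathrm{BC1}}$ and $u_1^{(n)}\to u_1$ in the $H^2$-norm, not merely in $L^2_{\tilde\rho}$. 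This is the crucial point: the off-diagonal coupling upgrades the convergence of $u_1$ to $H^2$ for free, which is exactly what makes the second component well-behaved. Then $w^{(n)}:=au_0^{(n)}{}''+z^{\nu-1}bu_1^{(n)}{}''$ converges in $L^2$ (since $a,b$ are bounded and $u_0^{(n)},u_1^{(n)}$ converge in $H^2$) to $w:=au_0''+z^{\nu-1}bu_1''$, and $(w^{(n)})''/\tilde\rho$ converges in $L^2_{\tilde\rho}$ (to $g_1-zu_1$). Hence $w^{(n)}\to w$ in $L^2$ and $(w^{(n)})''\to \tilde\rho(g_1-zu_1)$ in $L^2$, which (interpreting $''$ as the closed weak-derivative operator and using that $\mathcal{H}^2_{\mathrm{BC2}}$ is a closed subspace of $H^2$, via the Sobolev-embedding control of boundary values) gives $w\in\mathcal{H}^2_{\mathrm{BC2}}$ with $w''=\tilde\rho(g_1-zu_1)$. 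Therefore $(u_0,u_1)\in\mathcal{D}(\mathcal{A}(z))$ and $[\mathcal{A}(z)](u_0,u_1)=(g_0,g_1)$, so $\mathcal{A}(z)$ is closed.

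\textbf{Step 2: $S(z)$ is closed when $a+z^\nu b$ is non-vanishing.} Under this hypothesis, the map $u\mapsto au''+z^\nu bu''=(a+z^\nu b)u''$ together with the four boundary conditions sets up an isomorphism that lets one recover $u\in H^4$ from $(a+z^\nu b)u''\in \mathcal{H}^2_{\mathrm{BC2}}$ by solving a second-order ODE with the "$\mathrm{BC1}$-type" boundary data — i.e.\ the inverse of $u\mapsto ((a+z^\nu b)u'')''$ is a composition of two bounded second-order solution operators, hence bounded from $L^2$ into $H^4$. I would use this to show directly that if $u^{(n)}\to u$ in $L^2_{\tilde\rho}$ and $S(z)u^{(n)}=z^2u^{(n)}+((a+z^\nu b)u^{(n)}{}'')''/\tilde\rho\to g$, then $((a+z^\nu b)u^{(n)}{}'')''\to \tilde\rho(g-z^2u)$ in $L^2$, and applying the bounded inverse yields $u^{(n)}\to$ (something) in $H^4$, forcing $u\in\mathcal{H}^4_{\mathrm{BC}}=\mathcal{D}(S(z))$ with $S(z)u=g$. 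Thus $S(z)$ is closed, and since $T(z)=\mathrm{cl}(S(z))$ we get $S(z)=T(z)$.

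\textbf{Step 3: the resolvent identity.} Assume additionally that $\mathcal{A}(z)$ is invertible. Given $v\in L^2_{\tilde\rho}$, set $(u_0,u_1)=[\mathcal{A}(z)]^{-1}(0,v)$. The first component of the equation $[\mathcal{A}(z)](u_0,u_1)=(0,v)$ reads $zu_0-u_1=0$, i.e.\ $u_1=zu_0$; substituting into the second component gives $zu_1+(au_0''+z^{\nu-1}bu_1'')''/\tilde\rho=v$, i.e.\ $z^2u_0+((a+z^\nu b)u_0'')''/\tilde\rho=v$, which is exactly $S(z)u_0=v$; moreover the domain constraint $au_0''+z^{\nu-1}bu_1''=(a+z^\nu b)u_0''\in\mathcal{H}^2_{\mathrm{BC2}}$ together with $u_0\in\mathcal{H}^2_{\mathrm{BC1}}$ and the non-vanishing of $a+z^\nu b$ forces $u_0\in\mathcal{H}^4_{\mathrm{BC}}$ (by the second-order solution-operator argument of Step 2). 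Hence $u_0\in\mathcal{D}(S(z))$ solves $S(z)u_0=v$. Injectivity of $S(z)$ follows because a non-trivial kernel element $u_0$ would give $(u_0,zu_0)$ in the kernel of $\mathcal{A}(z)$, contradicting invertibility; and surjectivity onto $L^2_{\tilde\rho}$ was just shown, with a bounded inverse because $[\mathcal{A}(z)]^{-1}$ is bounded and the projection $(u_0,u_1)\mapsto u_0$ is bounded. Therefore $S(z)=T(z)$ is invertible and $[\mathcal{A}(z)]^{-1}(0,v)=(u_0,u_1)=([T(z)]^{-1}v,\,z[T(z)]^{-1}v)$, which is \eqref{res_form_quasi}.

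\textbf{Main obstacle.} The delicate part is Step 1 — specifically, arguing carefully that $w=au_0''+z^{\nu-1}bu_1''$ lands in $\mathcal{H}^2_{\mathrm{BC2}}$ in the limit. This needs the boundary conditions defining $\mathcal{H}^2_{\mathrm{BC2}}$ to pass to the limit, which relies on the Sobolev embedding $H^2(-1,1)\hookrightarrow C^1([-1,1])$ to make pointwise boundary values continuous functionals of the $H^2$-norm, and on first having upgraded $u_1^{(n)}\to u_1$ from $L^2_{\tilde\rho}$-convergence to $H^2$-convergence using the first component of $\mathcal{A}(z)$. Getting this bootstrap order right (first component gives $H^2$-convergence of $u_1$, which then feeds the second component) is the conceptual heart of the proof, and is precisely why the quasi-linearisation makes the closedness statement tractable.
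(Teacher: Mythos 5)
Your overall architecture is sound and your conclusions match the paper's, but you reach the closedness of $\mathcal{A}(z)$ by a genuinely different route. The paper computes $\mathcal{A}(z)^*$ explicitly and verifies $\mathcal{A}(z)^{**}=\mathcal{A}(z)$; you instead run a direct sequential argument, and your key observation is the right one: since the first component of the codomain carries the $\langle\cdot,\cdot\rangle_a$ (i.e.\ $H^2$) norm, convergence of $zu_0^{(n)}-u_1^{(n)}$ there, together with $u_0^{(n)}\to u_0$ in $\mathcal{H}^2_{\mathrm{BC1}}$, upgrades $u_1^{(n)}\to u_1$ from $L^2_{\tilde\rho}$ to $H^2$, after which the closedness of the weak second derivative and the continuity of the pointwise boundary functionals on $H^2$ finish the job. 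This is arguably more self-contained than the paper's adjoint computation. Your Step 3 (surjectivity of $S(z)$ via the first component identity $u_1=zu_0$, recovery of $u_0\in\mathcal{H}^4_{\mathrm{BC}}$ by dividing by the non-vanishing $a+z^\nu b$, injectivity via the kernel embedding $u_0\mapsto(u_0,zu_0)$, and boundedness of the inverse from boundedness of $[\mathcal{A}(z)]^{-1}$) is essentially the paper's argument.

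The genuine weak point is Step 2. You assert that $u\mapsto((a+z^\nu b)u'')''$ with the beam boundary conditions is boundedly invertible from $L^2$ to $H^4$ as ``a composition of two bounded second-order solution operators.'' This does not work as stated: for a clamped end all four boundary conditions sit on $u$ and none on $w=(a+z^\nu b)u''$, so the first second-order solve ($w''=f$) is underdetermined and the second ($u''=w/(a+z^\nu b)$) is overdetermined — the conditions do not split between the two factors. Moreover, bounded invertibility of this fourth-order operator with a \emph{complex} coefficient is neither implied by the hypotheses (which concern $\mathcal{A}(z)$, i.e.\ $z^2+((a+z^\nu b)\,\cdot\,'')''/\tilde\rho$, not the spatial operator alone) nor needed. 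Two repairs are available: either replace the invertibility claim by the a priori elliptic estimate $\|u\|_{H^4}\lesssim\|((a+z^\nu b)u'')''\|_{L^2}+\|u\|_{L^2}$ (which holds since $a+z^\nu b$ is smooth and non-vanishing, and suffices for closedness of $S(z)$); or, more simply, note that your Step 3 already exhibits $S(z)$ as a bijection from $\mathcal{H}^4_{\mathrm{BC}}$ onto $L^2_{\tilde\rho}(-1,1)$ with a bounded, everywhere-defined inverse, so its graph is the reflection of the graph of a bounded operator and is therefore closed — whence $S(z)=\mathrm{cl}(S(z))=T(z)$. The paper takes this second route in a slightly different guise, constructing the bounded left inverse $L(z)$ on $\mathcal{D}(S(z))$, extending the identity $L(z)S(z)=I$ to $T(z)$ by closure to get injectivity of $T(z)$, and concluding $S(z)=T(z)$ from the extension relation. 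So reorder: prove Step 3 first, then read off Step 2 as a corollary.
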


\begin{proof}
Let $z\in\mathbb{C}\backslash\mathbb{R}_{\leq 0}$. First we show that $\mathcal{A}(z)$ is closed by considering its adjoint. Suppose that $(v_0,v_1)\in\mathcal{D}(\mathcal{A}(z)^*)$, then the map
$$
(u_0,u_1)\rightarrow \left\langle \mathcal{A}(z)(u_0,u_1),(v_0,v_1) \right\rangle
$$
is continuous (viewed as a map from $\mathcal{D}(\mathcal{A}(z))\subset \mathcal{H}^2_{\mathrm{BC1}}\times L^2_{{\tilde\rho}}(-1,1) $ to $\mathbb{C}$). By restricting this map to vectors with $u_1=0$, we see that
$$
u_0\rightarrow \int_{-1}^1 (a(x)u_0''(x))''\overline{v_1(x)}dx
$$
is continuous on $\mathcal{H}^4_{\mathrm{BC}}\subset \mathcal{H}^2_{\mathrm{BC1}}$. By considering the closed quadratic form associated with the inner product $\langle \cdot,\cdot\rangle_{a}$, it follows that $v_1\in\mathcal{H}^2_{\mathrm{BC1}}$ (on a formal level, the required boundary conditions appear from studying when the boundary terms after integration by parts vanish). For general $(u_0,u_1)\in\mathcal{D}(\mathcal{A}(z))$, this allows us to integrate by parts to deduce that
\begin{equation*}
\begin{split}
\left\langle \mathcal{A}(z)(u_0,u_1),(v_0,v_1) \right\rangle&=z\left\langle (u_0,u_1),(v_0,v_1) \right\rangle\\
&+\int_{-1}^1 a(x)u_0''(x)\overline{v_1''(x)}dx+\int_{-1}^1 u_1''(x)\overline{[-av_0''(x)+\overline{z}^{\nu-1}bv_1''(x)]}dx.
\end{split}
\end{equation*}
The first integral term is continuous in $u_0\in\mathcal{H}^2_{\mathrm{BC1}}$. Set $w=-av_0''+\overline{z}^{\nu-1}bv_1''$, then we must have that
$$
u_1\rightarrow \int_{-1}^1 u_1''(x)\overline{w(x)}dx
$$
is continuous on the subspace $\mathcal{H}^2_{\mathrm{BC1}}\subset L^2_{{\tilde\rho}}(-1,1)$. An elementary analysis and computation now shows that we must have $w\in\mathcal{H}^2_{\mathrm{BC2}}$. It follows that
\begin{equation*}
\begin{split}
[\mathcal{A}(z)^*]\begin{pmatrix}
u_0, u_1
\end{pmatrix}
&=\overline{z}\begin{pmatrix}
u_0,u_1
\end{pmatrix}+\begin{pmatrix}
u_1,\frac{1}{{\tilde\rho}}(-au_0''+\overline{z}^{\nu-1}bu_1'')''
\end{pmatrix},\\
\mathcal{D}(\mathcal{A}(z)^*)&=\left\{(u_0,u_1)\in\mathcal{H}^2_{\mathrm{BC1}}\times\mathcal{H}^2_{\mathrm{BC1}}:-au_0''+\overline{z}^{\nu-1}bu_1''\in\mathcal{H}^2_{\mathrm{BC2}}\right\}.
\end{split}
\end{equation*}
We apply the same arguments to deduce that $\mathcal{A}(z)^{**}=\mathcal{A}(z)$. It follows that $\mathcal{A}(z)$ is closed.

Now suppose that $z\in\mathbb{C}\backslash\mathbb{R}_{\leq 0}$ is such that $\mathcal{A}(z)$ is invertible and $a+z^\nu b$ is non-vanishing on $[-1,1]$. Let $v\in L^2_{{\tilde\rho}}(-1,1)$ and set
\begin{equation}
\label{unravel1}
(u_0,u_1)=[\mathcal{A}(z)]^{-1}\begin{pmatrix}
0, v
\end{pmatrix}.
\end{equation}
It follows that $u_0\in\mathcal{H}^2_{\mathrm{BC1}}$, $au_0''+z^{\nu-1}bu_1''\in \mathcal{H}^2_{\mathrm{BC2}}$ and $
zu_1+(au_0''+z^{\nu-1}bu_1'')''/{\tilde\rho}=v.$ By considering the first component of \eqref{unravel1}, we must have $u_1=zu_0$ and hence since $a+z^\nu b$ is non-vanishing on $[-1,1]$, we must have that $u_0\in\mathcal{H}^4_{\mathrm{BC}}$ with $[S(z)]u_0=v.$ It follows that $S(z)$ is surjective and hence $T(z)$ is surjective. Similarly, we see that $\mathrm{Ker}(S(z))=\{0\}$, otherwise the kernel of $\mathcal{A}(z)$ is non-trivial, yielding a contradiction. Let $v\in L^2_{{\tilde\rho}}(-1,1)$ and consider
$$
(u_0(v),u_1(v))=[\mathcal{A}(z)]^{-1}\begin{pmatrix}
0, v
\end{pmatrix},
$$
where the notation means that the vectors $u_0$ and $u_1$ depend on $v$. We define $[L(z)]v$ by $[L(z)]v=u_1(v)/z.$ Note that $L$ must be bounded (as an operator on $L^2_{{\tilde\rho}}(-1,1)$) and
$$
[L(z)S(z)]v=v,\quad \forall v\in \mathcal{H}^4_{\mathrm{BC}}.
$$
Since $L(z)$ is bounded and $\mathrm{cl}(S(z))=T(z)$, it follows that $[L(z)T(z)]v=v$ for all $v\in\mathcal{D}(T(z))$. This shows that $\mathrm{Ker}(T(z))=\{0\}$ and hence both $T(z)$ and $S(z)$ are bijections from their respective domains to $L^2_{{\tilde\rho}}(-1,1)$. Since $T(z)$ extends $S(z)$, it follows that $S(z)=T(z)$. In particular, $S(z)$ is closed. Since $S(z)=T(z)$ is closed and a bijection from its domain to $L^2_{{\tilde\rho}}(-1,1)$, its inverse must be closed and hence bounded (by the closed graph theorem). We have $T(z)^{-1}=L(z)$ and \eqref{res_form_quasi} now follows.
\end{proof}

\begin{remark}[When can $a+z^\nu b$ vanish?] If $z\in\mathbb{C}\backslash\mathbb{R}_{\leq 0}$ is such that $|\mathrm{arg}(z)|<\pi/\nu$, then $a+z^\nu b$ is non-vanishing on $[-1,1]$ since $a$ and $b$ are strictly positive. If $\nu\leq 1$, then this sector contains the whole of $\mathbb{C}\backslash\mathbb{R}_{\leq 0}$. Note that $\{z\in \mathbb{C}\backslash\mathbb{R}_{\leq 0}:a(x)+z^\nu b(x)=0 \text{ for some }x\in[-1,1]\}$ must be a bounded (possibly empty) set in the closed left-half-pane for $\nu\in(0,2)$. We will see in \S \ref{sec:gen_num_range} that $\mathcal{A}(z)$ is invertible for $z$ in the interior of a sector $S_{\delta,0}$ (defined in \eqref{sec_def_n}) for some $\delta\in[0,\pi/2)$. It follows that for $\nu>1$, we can still deform the contour of integration away from the set of $z$ where $a+z^\nu b$ can vanish.
\end{remark}

\subsubsection{Bounding the generalised spectrum and generalised resolvent}
\label{sec:gen_num_range}

To bound the generalised spectrum of $\mathcal{A}(z)$ and hence of $T(z)$, we will make use of the following proposition. We shall use this in Theorem \ref{thm:RES_BOUND} to bound the inverse $T(z)^{-1}$ in suitable regions of the complex plane that allow us to use the contour method of \S \ref{cont_descrip}.

\begin{proposition}
\label{prop:polar_curve}
Let $z=re^{i\theta}\in \mathbb{C}\backslash \mathbb{R}_{\geq 0}$ and $\epsilon>0$. Suppose that $\mathrm{Re}(z)\geq \epsilon$ or $0<|(2-\nu)\theta|<\pi$ with
\begin{equation}
\label{r_cond_gen_pseudo}
r^{\nu/2}\geq 2\sqrt{\max_{x\in[-1,1]}\frac{a(x)}{b(x)}}\sqrt{\frac{\left(\frac{\epsilon}{r}-\cos(\theta)\right)\left|\cos((\nu-1)\theta)\right|}{\sin^2((2-\nu)\theta)}}+\frac{r^{\nu/2-1}\epsilon\sqrt{2}}{\left|\sin((2-\nu)\theta)\right|}.
\end{equation}
Then $\mathcal{A}(z)^{-1}$ exists with
\begin{equation}
\|\mathcal{A}(z)^{-1}\|\leq\epsilon^{-1},
\end{equation}
where $\|\cdot\|$ denotes the operator norm induced by the norm of $\mathcal{H}^2_{\mathrm{BC1}}\times L^2_{{\tilde\rho}}(-1,1)$.
\end{proposition}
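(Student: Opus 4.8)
The plan is to control the numerical range of $\mathcal{A}(z)$. It suffices to prove that
\[
|\langle\mathcal{A}(z)w,w\rangle|\geq\epsilon\|w\|^2\qquad\text{for all }w=(u_0,u_1)\in\mathcal{D}(\mathcal{A}(z)),
\]
where $\langle\cdot,\cdot\rangle$ and $\|\cdot\|$ are the product inner product and norm of \eqref{product_inner_product}. Indeed, Cauchy--Schwarz then gives $\|\mathcal{A}(z)w\|\geq\epsilon\|w\|$, and since $\mathcal{A}(z)$ is closed (Proposition \ref{prop:quasi_lin}) this makes $\mathcal{A}(z)$ injective with closed range. To pass to surjectivity I would use the explicit adjoint $\mathcal{A}(z)^*$ computed in the proof of Proposition \ref{prop:quasi_lin}: the same integration by parts shows $\langle\mathcal{A}(z)^*w,w\rangle=\overline{\langle\mathcal{A}(z)w,w\rangle}$ on $\mathcal{D}(\mathcal{A}(z)^*)$, so $\mathcal{A}(z)^*$ obeys the identical lower bound and is in particular injective; hence $\mathrm{Ran}(\mathcal{A}(z))$, being closed, equals $\mathrm{Ker}(\mathcal{A}(z)^*)^\perp$, the whole space, and $\|\mathcal{A}(z)^{-1}\|\leq\epsilon^{-1}$.

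To evaluate $\langle\mathcal{A}(z)w,w\rangle$, write $\mathcal{A}(z)=zI+\mathcal{B}(z)$ with $\mathcal{B}(z)(u_0,u_1)=(-u_1,\tfrac1{\tilde\rho}(au_0''+z^{\nu-1}bu_1'')'')$. The first slot contributes $\langle-u_1,u_0\rangle_a=-\int_{-1}^1 au_1''\overline{u_0''}\,dx$; in the second slot, integrating by parts twice moves the two derivatives onto $u_1$, and the boundary terms vanish precisely because $au_0''+z^{\nu-1}bu_1''\in\mathcal{H}^2_{\mathrm{BC2}}$ is paired with $u_1\in\mathcal{H}^2_{\mathrm{BC1}}$ (this is what the three Sobolev spaces are designed for). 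One obtains
\[
\langle\mathcal{A}(z)w,w\rangle=z\|w\|^2+z^{\nu-1}\!\int_{-1}^1 b\,|u_1''|^2\,dx+2i\,\mathrm{Im}\!\int_{-1}^1 a\,u_0''\overline{u_1''}\,dx .
\]
Set $P:=\|w\|^2$, $Q:=\int_{-1}^1 b|u_1''|^2\geq0$ and $R:=2\,\mathrm{Im}\int_{-1}^1 au_0''\overline{u_1''}\in\mathbb{R}$. By Cauchy--Schwarz together with $\int a|u_0''|^2\leq P$ and $\int a|u_1''|^2\leq MQ$, where $M=\max_{[-1,1]}a/b$, one has $|R|\leq2\sqrt{MPQ}$. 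Thus everything reduces to the scalar claim: under the hypotheses on $z=re^{i\theta}$, $|zP+z^{\nu-1}Q+iR|\geq\epsilon P$ whenever $P>0$, $Q\geq0$ and $|R|\leq2\sqrt{MPQ}$.

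For the scalar claim I would argue as follows. If $\mathrm{Re}(z)\geq\epsilon$ then $|\theta|<\pi/2$, hence $|(\nu-1)\theta|<\pi/2$ and $\cos((\nu-1)\theta)>0$, so taking real parts, $\mathrm{Re}(zP+z^{\nu-1}Q+iR)=P\,\mathrm{Re}(z)+Qr^{\nu-1}\cos((\nu-1)\theta)\geq\epsilon P$ and we are done. Otherwise we may assume $\mathrm{Re}(z)\leq\epsilon$ and scale so that $P=1$. Two lower bounds on $|\zeta|:=|z+z^{\nu-1}Q+iR|$ hold for all $Q\geq0$ and all admissible $R$: rotating by $e^{-i(\nu-1)\theta}$ (which makes the $z^{\nu-1}Q$ term real) and taking imaginary parts gives $|\zeta|\geq r|\sin((2-\nu)\theta)|-2\sqrt{MQ}\,|\cos((\nu-1)\theta)|$, while taking real parts of $\zeta$ gives $|\zeta|\geq r\cos\theta+Qr^{\nu-1}\cos((\nu-1)\theta)$. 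When $\cos((\nu-1)\theta)>0$ (automatic for $\nu\leq4/3$, using $|(2-\nu)\theta|<\pi$), the second bound already exceeds $\epsilon$ once $Q\geq Q_\dagger:=(\epsilon-\mathrm{Re}(z))/(r^{\nu-1}\cos((\nu-1)\theta))$, and for $Q\leq Q_\dagger$ the first bound is minimised at $Q=Q_\dagger$, where, using the identity $(2-\nu)\theta=\theta-(\nu-1)\theta$, it equals $r|\sin((2-\nu)\theta)|-2\sqrt{M r^{1-\nu}(\epsilon-\mathrm{Re}(z))\cos((\nu-1)\theta)}$; requiring this to be $\geq\epsilon$ is exactly a rearrangement of \eqref{r_cond_gen_pseudo}. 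The hypothesis $0<|(2-\nu)\theta|<\pi$ ensures $\sin((2-\nu)\theta)\neq0$ so that the displayed quantities are well defined.

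The main obstacle is the residual sign regime $\cos((\nu-1)\theta)\leq0$ (possible only when $\nu>4/3$ and $|\theta|$ is large), where the real-part bound on $\zeta$ no longer helps for large $Q$; there one instead rotates by $e^{-i(\nu-1)\theta}$ and takes real parts, exploiting the now-favourable $+Qr^{\nu-1}$ term, and combines this with the imaginary-part bound. Tracking which bound dominates on which range of $Q$ and checking that the constants never degrade below those in \eqref{r_cond_gen_pseudo} is the delicate part; the absolute values throughout \eqref{r_cond_gen_pseudo} are what let a single inequality cover all sign cases, and the factor $\sqrt2$ there supplies the slack used in the transitional estimates (e.g.\ via $u^2+v^2\geq\tfrac12(u+v)^2$). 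Everything else — the operator-theoretic reduction of the first paragraph and the integration by parts of the second — is routine given Proposition \ref{prop:quasi_lin}.
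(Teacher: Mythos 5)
Your proposal is correct and follows essentially the same route as the paper: the paper also bounds the numerical range of $\mathcal{A}(z)$, derives the identity $\langle\mathcal{A}(z)w,w\rangle=z+2i\,\mathrm{Im}(w)+z^{\nu-1}l$ with $l=\langle u_1,u_1\rangle_b$ and $2|\langle u_0,u_1\rangle_a|\leq 2\sqrt{\max a/b}\,l^{1/2}$ by the same integration by parts, and reduces to the same scalar estimate. Two points of comparison. First, your operator-theoretic reduction is actually \emph{more} complete than the paper's: the paper simply asserts that $r_n(z):=\inf\{|\hat z|:\hat z\in\mathcal{W}(\mathcal{A}(z))\}>0$ plus closedness yields invertibility with $\|\mathcal{A}(z)^{-1}\|\leq r_n(z)^{-1}$, whereas for an unbounded closed operator one does need the extra surjectivity input you supply via the adjoint (the paper's explicit formula for $\mathcal{A}(z)^*$ shows its quadratic form is the conjugate expression $\overline{z}-2i\,\mathrm{Im}\langle u_0,u_1\rangle_a+\overline{z}^{\nu-1}l$, so the identical scalar bound applies to it). Second, the one subcase you defer, $\cos((\nu-1)\theta)\leq 0$, is handled in the paper not by a further rotation but by arguing by contradiction: assuming $|\mathrm{Re}(\hat z)|\leq\epsilon$, the real-part identity $r^{\nu-1}l=(\mathrm{Re}(\hat z)-r\cos\theta)/\cos((\nu-1)\theta)$ together with $l\geq 0$ forces
\begin{equation*}
l\;\leq\; r^{1-\nu}\,\frac{\epsilon-r\cos\theta}{\left|\cos((\nu-1)\theta)\right|}
\end{equation*}
for \emph{either} sign of $\cos((\nu-1)\theta)$ (when it is negative one necessarily has $\cos\theta<0$ and uses $|\mathrm{Re}(\hat z)-r\cos\theta|\leq\epsilon-r\cos\theta$), after which a single imaginary-part estimate, with the bound $(1+|\tan((\nu-1)\theta)|)|\cos((\nu-1)\theta)|\leq\sqrt{2}$ accounting for the factor $\sqrt{2}$ in \eqref{r_cond_gen_pseudo}, closes all cases at once; your threshold argument in $Q$ also goes through in this regime because $|\mathrm{Re}(\zeta)|$ is then increasing in $Q$, so the deferred case is genuinely fillable and not a flaw in the strategy.
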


\begin{proof}
The numerical range of the operator $\mathcal{A}(z)$ is given by
$$
\mathcal{W}(\mathcal{A}(z)):=\left\{\left\langle [\mathcal{A}(z)](u_0,u_1),(u_0,u_1)\right\rangle:(u_0,u_1)\in \mathcal{D}(\mathcal{A}(z)),\|(u_0,u_1)\|=1\right\},
$$
where $\|\cdot\|$ denotes the norm induced by the inner product on $\mathcal{H}^2_{\mathrm{BC1}}\times L^2_{{\tilde\rho}}(-1,1)$. We set
$$
r_n(z)=\inf_{\hat z\in\mathcal{W}(\mathcal{A}(z))}|\hat z|.
$$
If $r_n(z)>0$, then since $\mathcal{A}(z)$ is closed, $\mathcal{A}(z)^{-1}$ exists with $\|\mathcal{A}(z)^{-1}\|\leq {r_n(z)}^{-1}.$ Therefore, to prove the lemma, it is enough to provide lower bounds of the form $r_n(z)\geq \epsilon$.

Let $(u_0,u_1)\in \mathcal{D}(\mathcal{A}(z))$ with $\|(u_0,u_1)\|=1$. It follows that
$$
\left\langle [\mathcal{A}(z)](u_0,u_1),(u_0,u_1)\right\rangle=z-\int_{-1}^1 a(x)u_1''(x)\overline{u_0''(x)}dx+\int_{-1}^1\left[a(x)u_0''(x)+z^{\nu-1}b(x)u_1''(x)\right]''\overline{u_1(x)}dx.
$$
The fact that $u_0,u_1\in\mathcal{H}^2_{\mathrm{BC1}}$ and $au_0''+z^{\nu-1}bu_1''\in\mathcal{H}^2_{\mathrm{BC2}}$ means that upon integration by parts, the boundary terms vanish and we are left with
\begin{align*}
\langle [\mathcal{A}(z)](u_0,u_1),(u_0,u_1)\rangle&=z-\int_{-1}^1 a(x)u_1''(x)\overline{u_0''(x)}dx+\int_{-1}^1\left[a(x)u_0''(x)+z^{\nu-1}b(x)u_1''(x)\right]\overline{u_1''(x)}dx\notag\\
&=z+2i\mathrm{Im}\left[\int_{-1}^1 a(x)u_0''(x)\overline{u_1''(x)}dx\right]+z^{\nu-1}\int_{-1}^1b(x)|u_1''(x)|^2dx.
\end{align*}
Let $l=\langle u_1,u_1\rangle_b$ and $w=\langle u_0,u_1\rangle_a$, then $l\in\mathbb{R}_{\geq 0}$ and we can write
\begin{equation}
\label{param_NR_form}
\langle [\mathcal{A}(z)](u_0,u_1),(u_0,u_1)\rangle=z+2i\mathrm{Im}(w)+z^{\nu-1}l.
\end{equation}
Moreover, by H\"older's inequality (on $L^2(-1,1)$), we have
\begin{equation}
|w|\leq \sqrt{\max_{x\in[-1,1]}\frac{a(x)}{b(x)}}\sqrt{\langle u_0,u_0\rangle_a}\cdot l^{1/2}\leq\sqrt{\max_{x\in[-1,1]}\frac{a(x)}{b(x)}}\cdot l^{1/2},
\end{equation}
where the second inequality follows from the fact that $\sqrt{\langle u_0,u_0\rangle_a}\leq\|(u_0,u_1)\|=1$. For notational convenience, we set $\smash{C=2\sqrt{\max_{x\in[-1,1]}{a(x)}/{b(x)}}},$ so that $2|w|\leq Cl^{1/2}$.

Let $\epsilon>0$ and suppose first of all that $\mathrm{Re}(z)\geq \epsilon$. By \eqref{param_NR_form}, the real part of any point in $\mathcal{W}(\mathcal{A}(z))$ is given by $\mathrm{Re}(z)+r^{\nu-1}\cos\left((\nu-1)\theta\right)l,
$ for some $l>0$, where $z=re^{i\theta}$ and $(r,\theta)$ denote the usual polar coordinates. Since we restricted to $\nu\in(0,2)$, it follows that for $\mathrm{Re}(z)>0$ we must have $\cos\left((\nu-1)\theta\right)>0$. Hence it follows that $r_n(z)\geq\mathrm{Re}(z)\geq \epsilon>0.$

Now consider the case that $x=\mathrm{Re}(z)< \epsilon$ and suppose that \eqref{r_cond_gen_pseudo} holds and that $|(\nu-1)\theta|=\pi/2$ so that $\cos\left((\nu-1)\theta\right)=0$. Then the condition \eqref{r_cond_gen_pseudo} reduces to  $\sqrt{2}\epsilon\leq r|\cos(\theta)|$. But $|\mathrm{Re}(\hat z)|=r|\cos(\theta)|$ for any $\hat z\in\mathcal{W}(\mathcal{A}(z))$ since $\cos\left((\nu-1)\theta\right)=0$, and hence $r_n(z)\geq \epsilon>0.$

Finally, consider the case that $x=\mathrm{Re}(z)< \epsilon$ and suppose that \eqref{r_cond_gen_pseudo} holds and that $|(\nu-1)\theta|\neq\pi/2$ so that $\cos\left((\nu-1)\theta\right)\neq0$.
 Let
$$
\hat z= z+2i\mathrm{Im}(w)+z^{\nu-1}l \in\mathcal{W}(\mathcal{A}(z)),
$$
where we use the same notation as in \eqref{param_NR_form}. Suppose also that $|\mathrm{Re}(\hat z)|\leq \epsilon$ (otherwise we must have $|\hat z|\geq \epsilon$ and there is nothing to prove). Considering polar coordinates, we can re-write the equation for the real part of $\hat z$ as
\begin{equation}
\label{real_part_zhat}
r^{\nu-1}l=\frac{\mathrm{Re}(\hat z)-r\cos(\theta)}{\cos\left((\nu-1)\theta\right)},
\end{equation}
from which we obtain
$$
\mathrm{Im}(z^{\nu-1}l)=\tan\left((\nu-1)\theta\right)(\mathrm{Re}(\hat z)-r\cos(\theta)).
$$
Combining with $2|w|\leq Cl^{1/2}$, we obtain
\begin{equation}
\label{im_hatz_nearly}
\begin{split}
|\mathrm{Im}(\hat z)|&\geq \left|\mathrm{Im}(z)-r\cos(\theta)\tan\left((\nu-1)\theta\right)\right|-\epsilon\left|\tan\left((\nu-1)\theta\right)\right|-Cl^{1/2}\\
&=r\left|\sin(\theta)-\cos(\theta)\tan\left((\nu-1)\theta\right)\right|-\epsilon\left|\tan\left((\nu-1)\theta\right)\right|-Cl^{1/2}\\
&=r\frac{\left|\sin\left((2-\nu)\theta\right)\right|}{\left|\cos\left((\nu-1)\theta\right)\right|}-\epsilon\left|\tan\left((\nu-1)\theta\right)\right|-Cl^{1/2}
\end{split}
\end{equation}
Returning to \eqref{real_part_zhat}, we see that
$$
l=r^{1-\nu}\frac{\mathrm{Re}(\hat z)-\mathrm{Re}(z)}{\cos\left((\nu-1)\theta\right)}.
$$
If $\cos(\theta)\geq 0$, then we have that $\cos\left((\nu-1)\theta\right)=\left|\cos\left((\nu-1)\theta\right)\right|$ and $\mathrm{Re}(\hat z)-\mathrm{Re}(z)\leq \epsilon-r\cos(\theta)$. If $\cos(\theta)< 0$ then $\left|\mathrm{Re}(\hat z)-\mathrm{Re}(z)\right|\leq \epsilon-r\cos(\theta)$. It follows that
$$
l\leq r^{1-\nu}\frac{\epsilon-r\cos(\theta)}{\left|\cos\left((\nu-1)\theta\right)\right|}.
$$
Substituting this into \eqref{im_hatz_nearly}, we see that
\begin{equation}
\label{im_hatz}
|\mathrm{Im}(\hat z)|\geq r\frac{\left|\sin\left((2-\nu)\theta\right)\right|}{\left|\cos\left((\nu-1)\theta\right)\right|}-\epsilon\left|\tan\left((\nu-1)\theta\right)\right|-Cr^{1-\nu/2}\sqrt{\frac{\epsilon/r-\cos(\theta)}{\left|\cos\left((\nu-1)\theta\right)\right|}}.
\end{equation}
Suppose for a contradiction that $|\mathrm{Im}(\hat z)|<\epsilon$, then \eqref{im_hatz} implies that
\begin{equation}
\label{im_hatz2}
r\frac{\left|\sin\left((2-\nu)\theta\right)\right|}{\left|\cos\left((\nu-1)\theta\right)\right|}-\epsilon\left|\tan\left((\nu-1)\theta\right)\right|-Cr^{1-\nu/2}\sqrt{\frac{\epsilon/r-\cos(\theta)}{\left|\cos\left((\nu-1)\theta\right)\right|}}<\epsilon.
\end{equation}
Rearranging \eqref{im_hatz2} leads to the condition
\begin{equation}
\label{im_hatz3}
r^{\nu/2}<C\sqrt{\frac{\left(\epsilon/r-\cos(\theta)\right)\left|\cos\left((\nu-1)\theta\right)\right|}{\sin^2\left((2-\nu)\theta\right)}}+\frac{r^{\nu/2-1}\epsilon\left(1+\left|\tan\left((\nu-1)\theta\right)\right|\right)\left|\cos\left((\nu-1)\theta\right)\right|}{\left|\sin\left((2-\nu)\theta\right)\right|}
\end{equation}
Finally we use
\begin{align*}
\left(1+\left|\tan\left((\nu-1)\theta\right)\right|\right)\left|\cos\left((\nu-1)\theta\right)\right|&=\left|\cos\left((\nu-1)\theta\right)\right|+\left|\sin\left((\nu-1)\theta\right)\right|\leq \sqrt{2}.
\end{align*}
Using this to bound the second term on the right-hand side of \eqref{im_hatz3}, we see that \eqref{r_cond_gen_pseudo} is violated, giving the required contradiction. This completes the proof of the proposition.
\end{proof}

We are now ready to bound the inverse of $T(z)$ via the following theorem.

\begin{theorem}
\label{thm:RES_BOUND}
Let $z=x+iy=re^{i\theta}\in \mathbb{C}\backslash \mathbb{R}_{\geq 0}$ and $\epsilon>0$, and suppose that either one of the conditions of Proposition \ref{prop:polar_curve} holds. Then $T(z)^{-1}$ exists and satisfies
\begin{equation}
\label{T_bd_final}
\sqrt{\langle T(z)^{-1}v,T(z)^{-1}v\rangle_a+|z|^2\|T(z)^{-1}v\|^2_{L^2_{\tilde\rho}(-1,1)}}\leq \epsilon^{-1}\|v\|_{L^2_{\tilde\rho}(-1,1)}.
\end{equation}
In particular, there exists a computable constant $c$ such that $\|T(z)^{-1}\|\leq {c\epsilon^{-1}}/({1+|z|}),$ where $\|\cdot\|$ denotes the operator norm on the space of bounded linear maps on $L^2_{\tilde\rho}(-1,1)$. Moreover, $T(z)^{-1}$ is bounded by $\epsilon^{-1}$ when viewed as a map from $L^2_{\tilde\rho}(-1,1)$ to $\mathcal{H}^2_{\mathrm{BC1}}$.
\end{theorem}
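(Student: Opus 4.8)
The plan is to deduce Theorem \ref{thm:RES_BOUND} directly from Proposition \ref{prop:quasi_lin} and Proposition \ref{prop:polar_curve}. Under either hypothesis of Proposition \ref{prop:polar_curve}, that proposition gives that $\mathcal{A}(z)^{-1}$ exists with $\|\mathcal{A}(z)^{-1}\|\leq\epsilon^{-1}$ (in the product norm on $\mathcal{H}^2_{\mathrm{BC1}}\times L^2_{\tilde\rho}(-1,1)$). To invoke Proposition \ref{prop:quasi_lin} and conclude that $T(z)$ is invertible with the resolvent formula \eqref{res_form_quasi}, I also need $a+z^\nu b$ non-vanishing on $[-1,1]$. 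First I would check that this follows from the hypotheses: in the case $\mathrm{Re}(z)\geq\epsilon$ we have $|\arg(z)|<\pi/2$, and since $\nu\in(0,2)$ this gives $|\arg(z^\nu)|<\pi$, so $z^\nu$ has positive real part and $a+z^\nu b$ cannot vanish as $a,b>0$; in the remaining case the relevant $z$ satisfy the sector-type restriction coming from $0<|(2-\nu)\theta|<\pi$, and one checks (as noted in the Remark following Proposition \ref{prop:quasi_lin}) that the zero set of $a+z^\nu b$ is a bounded subset of the closed left half-plane, which is avoided once \eqref{r_cond_gen_pseudo} forces $r$ large enough. (If the statement as written is intended to hold verbatim, the cleanest route is simply to add this non-vanishing condition to the hypotheses or to note it is implied; I expect this bookkeeping to be the only delicate point.)

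Granting the hypotheses of Proposition \ref{prop:quasi_lin}, I next translate the bound on $\mathcal{A}(z)^{-1}$ into the bound \eqref{T_bd_final} on $T(z)^{-1}$. By \eqref{res_form_quasi}, for any $v\in L^2_{\tilde\rho}(-1,1)$,
$$
\mathcal{A}(z)^{-1}(0,v)=\bigl(T(z)^{-1}v,\;z\,T(z)^{-1}v\bigr).
$$
Computing the squared norm of the left-hand side using the inner product \eqref{product_inner_product} gives exactly
$$
\langle T(z)^{-1}v,T(z)^{-1}v\rangle_a+|z|^2\|T(z)^{-1}v\|^2_{L^2_{\tilde\rho}(-1,1)}=\bigl\|\mathcal{A}(z)^{-1}(0,v)\bigr\|^2\leq\epsilon^{-2}\|(0,v)\|^2=\epsilon^{-2}\|v\|^2_{L^2_{\tilde\rho}(-1,1)},
$$
since $\|(0,v)\|^2=\|v\|^2_{L^2_{\tilde\rho}(-1,1)}$ by \eqref{product_inner_product}. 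Taking square roots yields \eqref{T_bd_final}.

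The three consequences then follow by elementary estimates. Dropping the first (nonnegative) term in \eqref{T_bd_final} gives $|z|\,\|T(z)^{-1}v\|_{L^2_{\tilde\rho}}\leq\epsilon^{-1}\|v\|_{L^2_{\tilde\rho}}$, hence $\|T(z)^{-1}\|\leq\epsilon^{-1}/|z|$ on $L^2_{\tilde\rho}(-1,1)$; to get the stated $c\epsilon^{-1}/(1+|z|)$ form one just notes that on the region in question $|z|$ is bounded below by a positive computable constant (e.g. $|z|\geq\epsilon$ when $\mathrm{Re}(z)\geq\epsilon$, and \eqref{r_cond_gen_pseudo} forces a computable lower bound on $r$ otherwise), so $1/|z|\leq c/(1+|z|)$ for a computable $c$. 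For the last claim, dropping instead the second term in \eqref{T_bd_final} gives $\langle T(z)^{-1}v,T(z)^{-1}v\rangle_a\leq\epsilon^{-2}\|v\|^2_{L^2_{\tilde\rho}}$, i.e. $\|T(z)^{-1}v\|_a\leq\epsilon^{-1}\|v\|_{L^2_{\tilde\rho}}$; since $\|\cdot\|_a$ is the norm on $\mathcal{H}^2_{\mathrm{BC1}}$, this says $T(z)^{-1}\colon L^2_{\tilde\rho}(-1,1)\to\mathcal{H}^2_{\mathrm{BC1}}$ has norm at most $\epsilon^{-1}$. The main obstacle, as indicated, is not any analytic difficulty but rather confirming that $a+z^\nu b$ is non-vanishing under the stated hypotheses so that Proposition \ref{prop:quasi_lin} applies cleanly; everything else is a direct unravelling of the quasi-linearisation identity.
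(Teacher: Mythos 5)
Your overall architecture is exactly the paper's: invoke Proposition \ref{prop:polar_curve} for invertibility of $\mathcal{A}(z)$ with $\|\mathcal{A}(z)^{-1}\|\leq\epsilon^{-1}$, transfer to $T(z)$ via the identity \eqref{res_form_quasi} of Proposition \ref{prop:quasi_lin}, and read off \eqref{T_bd_final} from the product inner product \eqref{product_inner_product}. However, the point you flag as ``the only delicate point'' — that $a+z^\nu b$ is non-vanishing under the stated hypotheses — is a genuine gap in your write-up, and it is precisely the part of the paper's proof that requires a computation. Your assertion that the bounded zero set ``is avoided once \eqref{r_cond_gen_pseudo} forces $r$ large enough'' is unjustified: the generic lower bound on $r$ coming from \eqref{r_cond_gen_pseudo} (e.g.\ from the second term, $r\gtrsim\epsilon$) can be arbitrarily small and need not clear the zero set. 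The paper closes this by observing that vanishing forces $\theta=\pm\pi/\nu$ and $r^{\nu}\leq\max_x a(x)/b(x)$, and then evaluating \eqref{r_cond_gen_pseudo} at exactly that angle, where $|\cos((\nu-1)\theta)|=|\cos(\pi/\nu)|$ and $\sin^2((2-\nu)\theta)=4\sin^2(\pi/\nu)\cos^2(\pi/\nu)$, so that the first term alone yields $r^{\nu/2}\geq|\sin(\pi/\nu)|^{-1}\sqrt{\max_x a(x)/b(x)}>\sqrt{\max_x a(x)/b(x)}$ — a contradiction. Without this step the theorem as stated is not proved (and adding non-vanishing as an extra hypothesis, as you suggest, would weaken the statement). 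Separately, in the case $\mathrm{Re}(z)\geq\epsilon$ your intermediate claim that $|\arg(z^\nu)|<\pi$ implies $\mathrm{Re}(z^\nu)>0$ is false for $\nu$ close to $2$; the correct (and sufficient) observation is that $|\arg(z^\nu)|<\pi$ prevents $z^\nu$ from being a negative real number, which is what vanishing of $a+z^\nu b$ would require.

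A second, smaller divergence: for the bound $\|T(z)^{-1}\|\leq c\epsilon^{-1}/(1+|z|)$ you drop the $\langle\cdot,\cdot\rangle_a$ term and use a lower bound $|z|\gtrsim\epsilon$, which produces a constant $c$ that blows up like $1+\epsilon^{-1}$ as $\epsilon\downarrow 0$. The paper instead keeps both terms and uses Poincar\'e's inequality, $\|u\|^2_{L^2_{\tilde\rho}}\leq C_P\langle u,u\rangle_a$ for $u\in\mathcal{H}^2_{\mathrm{BC1}}$, so that \eqref{T_bd_final} gives $(C_P^{-1}+|z|^2)\|T(z)^{-1}v\|^2_{L^2_{\tilde\rho}}\leq\epsilon^{-2}\|v\|^2_{L^2_{\tilde\rho}}$ and hence a constant $c$ depending only on $C_P$, uniformly in $z$ and $\epsilon$. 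Since the whole point of the theorem is a resolvent bound used along contours where $\epsilon$ is the controlling parameter, you should adopt the Poincar\'e route here. The remaining steps (the norm computation from \eqref{res_form_quasi} and the $\mathcal{H}^2_{\mathrm{BC1}}$ mapping bound) are correct and match the paper.
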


\begin{proof}
Suppose that the stated conditions hold. By Proposition \ref{prop:polar_curve}, $\mathcal{A}(z)^{-1}$ exists and satisfies
$$
\|\mathcal{A}(z)^{-1}\|\leq \epsilon^{-1}.
$$
To apply Proposition \ref{prop:quasi_lin}, we must show that $a+z^{\nu} b$ is non-vanishing on $[-1,1]$. If $a(x)+z^{\nu} b(x)=0$, then $\theta=\pm \pi/\nu$ and $\mathrm{Re}(z)\leq 0$. Hence by our assumptions on $z$, \eqref{r_cond_gen_pseudo} must hold. It follows that
\begin{equation}
\label{hgb}
r^{\nu/2}\geq 2\sqrt{\max_{x\in[-1,1]}\frac{a(x)}{b(x)}}\sqrt{\frac{\left|\cos(\theta)\right|\left|\cos((\nu-1)\theta)\right|}{\sin^2((2-\nu)\theta)}}.
\end{equation}
But if $\theta=\pm \pi/\nu$, then $\left|\cos((\nu-1)\theta)\right|=|\cos(\pi/\nu)|$ and $\sin^2((2-\nu)\theta)=4\sin^2(\pi/\nu)\cos^2(\pi/\nu)$. Using \eqref{hgb}, we see that
$$
r^{\nu/2}\geq \frac{1}{|\sin(\pi/\nu)|}\sqrt{\max_{x\in[-1,1]}\frac{a(x)}{b(x)}}>\sqrt{\max_{x\in[-1,1]}\frac{a(x)}{b(x)}},
$$
which contradicts $a(x)+z^{\nu} b(x)=0$. It follows that the conditions of Proposition \ref{prop:quasi_lin} hold. (Note that $|\sin(\pi/\nu)|\neq 0$ since we restricted to $|(2-\nu)\theta|<\pi$.)

By Proposition \ref{prop:quasi_lin}, $T(z)$ is invertible and satisfies
\begin{equation*}
[\mathcal{A}(z)]^{-1}\begin{pmatrix}
0, v
\end{pmatrix}=\begin{pmatrix}
[T(z)]^{-1}v,z[T(z)]^{-1}v
\end{pmatrix},\quad \forall v\in L^2_{{\tilde\rho}}(-1,1).
\end{equation*}
It follows that
$$
\sqrt{\langle T(z)^{-1}v,T(z)^{-1}v\rangle_a+|z|^2\|T(z)^{-1}v\|^2_{L^2_{\tilde\rho}(-1,1)}}=\left\|[\mathcal{A}(z)]^{-1}\begin{pmatrix}
0, v
\end{pmatrix}\right\|\leq \epsilon^{-1}\|(0,v)\|.
$$
The bound \eqref{T_bd_final} and conclusion now follow, since $\langle T(z)^{-1}v,T(z)^{-1}v\rangle_{\tilde\rho}$ is bounded above by a multiple of $\langle T(z)^{-1}v,T(z)^{-1}v\rangle_a$ by using the boundary conditions, an application of Poincaré's inequality and integration by parts. The constant $c$ can also be computed if desired.
\end{proof}

By a simple monotonicity argument, if $\mathrm{Re}(z)\leq \epsilon$, then for $0<|(2-\nu)\theta|<\pi$ and $\epsilon\geq0$, there exists $r_{\nu}^*(\theta,\epsilon)$ such that if $r\geq r_{\nu}^*(\theta,\epsilon)$, then \eqref{r_cond_gen_pseudo} holds. By combining with the case of $\mathrm{Re}(z)\geq \epsilon$, this allows us to compute bounding curves for the conditions of Theorem \ref{thm:RES_BOUND}. These curves can be thought of as generalised pseudospectral contours for $\mathcal{A}(z)$ and hence the fractional pencil $T(z)$. For simplicity, we now analyse $r_{\nu}^*(\theta,0)$ which is given by
\begin{equation}
\label{r_cond_gen_pseudo0}
r_{\nu}^*(\theta,0)=\left[4\max_{x\in[-1,1]}\frac{a(x)}{b(x)}\frac{\left|\cos(\theta)\right|\left|\cos((\nu-1)\theta)\right|}{\sin^2((2-\nu)\theta)}\right]^{1/\nu},\quad |\theta|\geq \pi/2.
\end{equation}
There are three qualitative cases, all of which are shown in Figure \ref{fig:pseudospectra}:

\vspace{2mm}

\noindent{}\textbf{Case 1: $\nu\in(0,1)$:} In this case,
$$
\lim_{|\theta|\uparrow \pi/(2-\nu)}r_{\nu}^*(\theta,0)=\infty
$$
and the curve is asymptotic to the line segments with $|\theta|=\pi/(2-\nu)$.

\noindent{}\textbf{Case 2: $\nu=1$:} In this case, \eqref{r_cond_gen_pseudo0} reduces to the parabola
$$
\mathrm{Im}(z)= 2\sqrt{\max_{x\in[-1,1]}\frac{a(x)}{b(x)}}\times\sqrt{-\mathrm{Re}(z)}.
$$
In fact, in this case $-\mathcal{A}(0)$ generates a strongly continuous semigroup of contractions \cite{liu1998spectrum,zhang2011spectrum}.

\noindent{}\textbf{Case 3: $\nu\in(1,2)$:} In this case, the curve for $r_{\nu}^*(\theta,\epsilon)$ is bounded as $\theta$ varies. Moreover, for $\nu \geq 3/2$,
$$
r_{\nu}^*(\pm\pi/(2(\nu-1)),0)=0,
$$
and as $\nu\uparrow 2$, the curve degenerates to the negative real axis.

\begin{figure}[t!]
  \centering
  \begin{minipage}[b]{0.48\textwidth}
    \begin{overpic}[width=\textwidth]{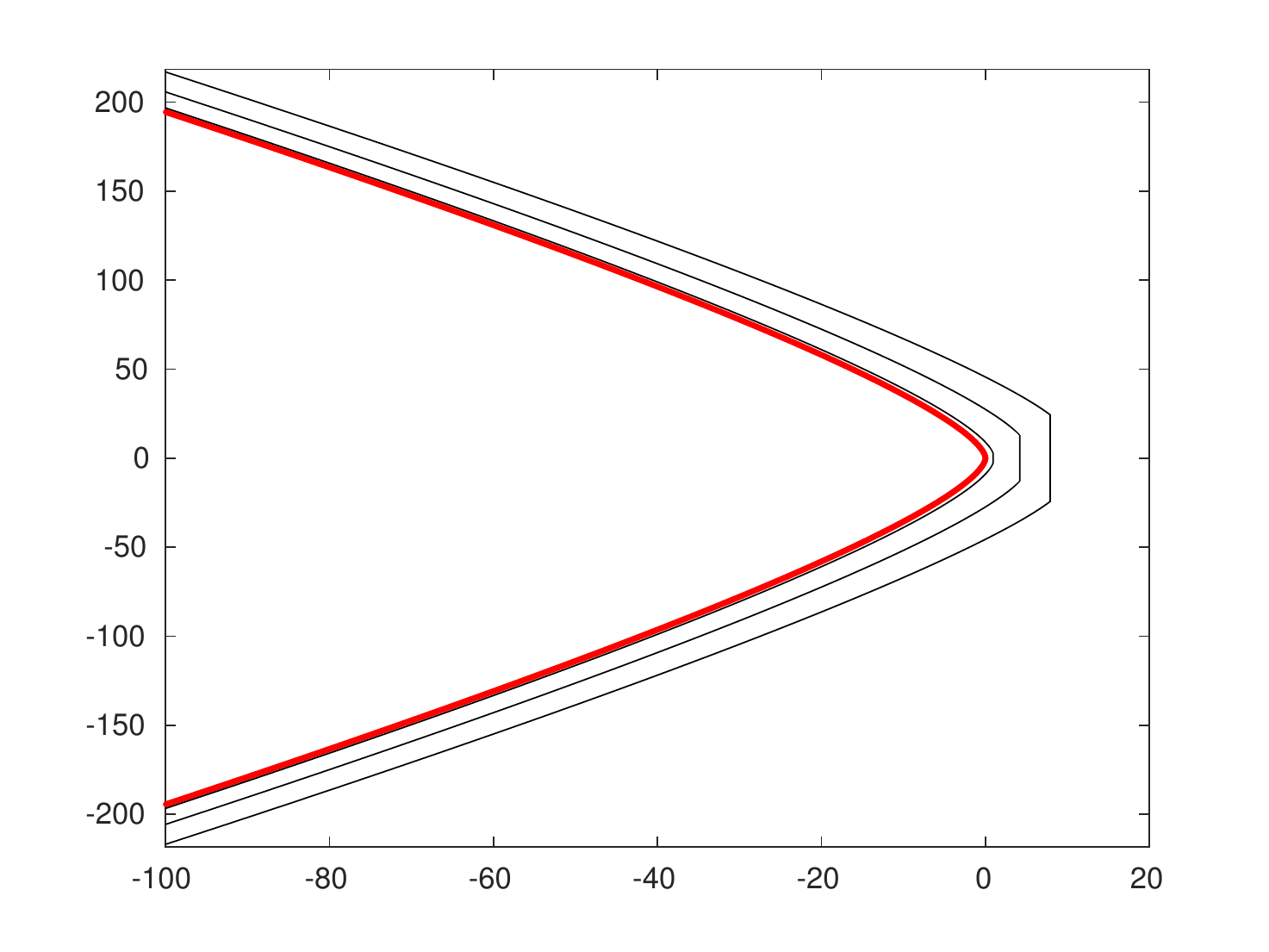}
     \put (47,73) {$\displaystyle \nu=0.7$}
		\put (50,-2) {$\displaystyle \mathrm{Re}(z)$}
		\put (2,30) {\rotatebox{90}{$\displaystyle \mathrm{Im}(z)$}}
     \end{overpic}
  \end{minipage}
  \hfill
  \begin{minipage}[b]{0.48\textwidth}
    \begin{overpic}[width=\textwidth]{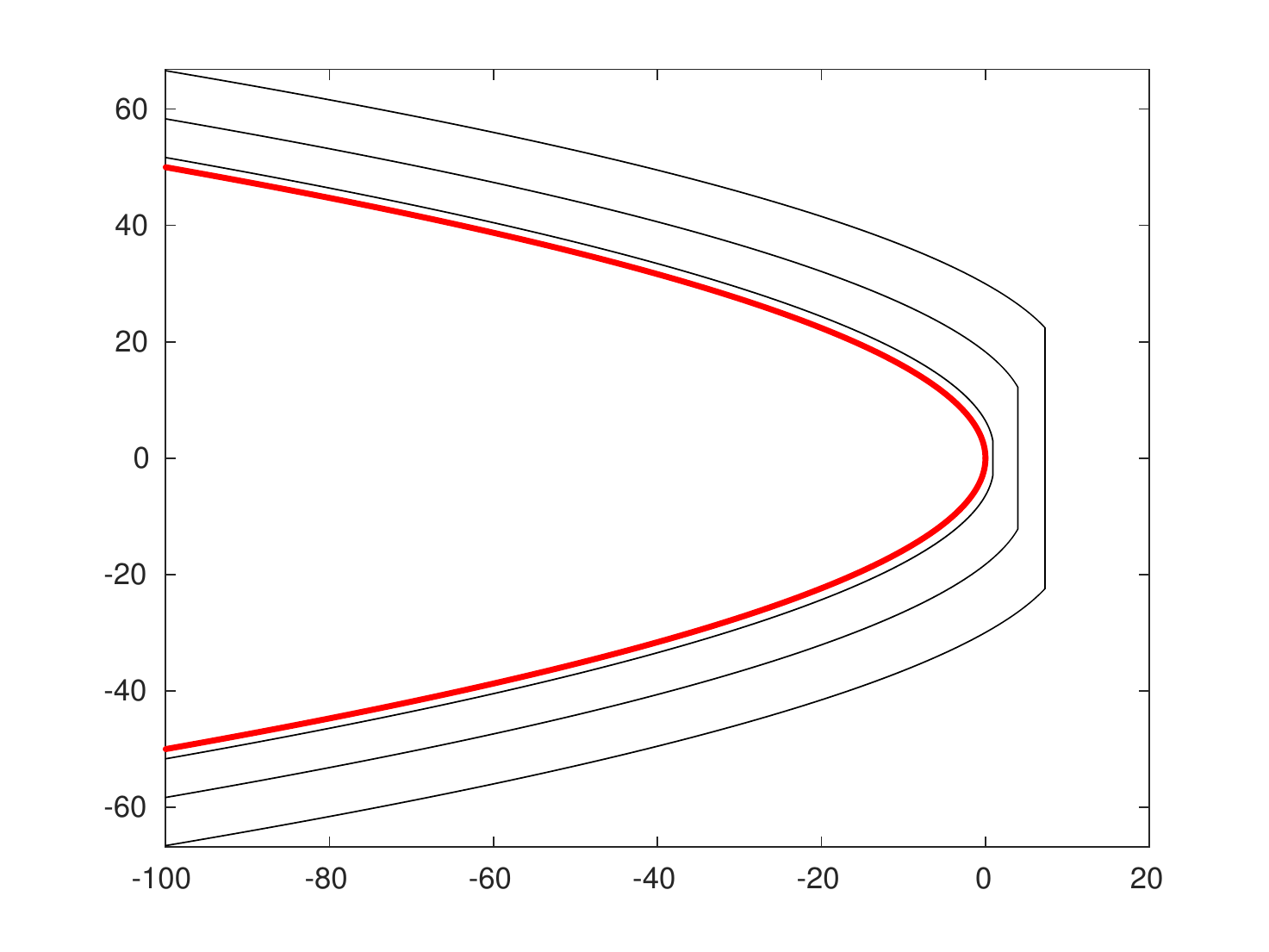}
    \put (47,73) {$\displaystyle \nu=1$}
		\put (50,-2) {$\displaystyle \mathrm{Re}(z)$}
		\put (2,30) {\rotatebox{90}{$\displaystyle \mathrm{Im}(z)$}}
     \end{overpic}
  \end{minipage}\\ \vspace{6mm}
	\begin{minipage}[b]{0.48\textwidth}
    \begin{overpic}[width=\textwidth]{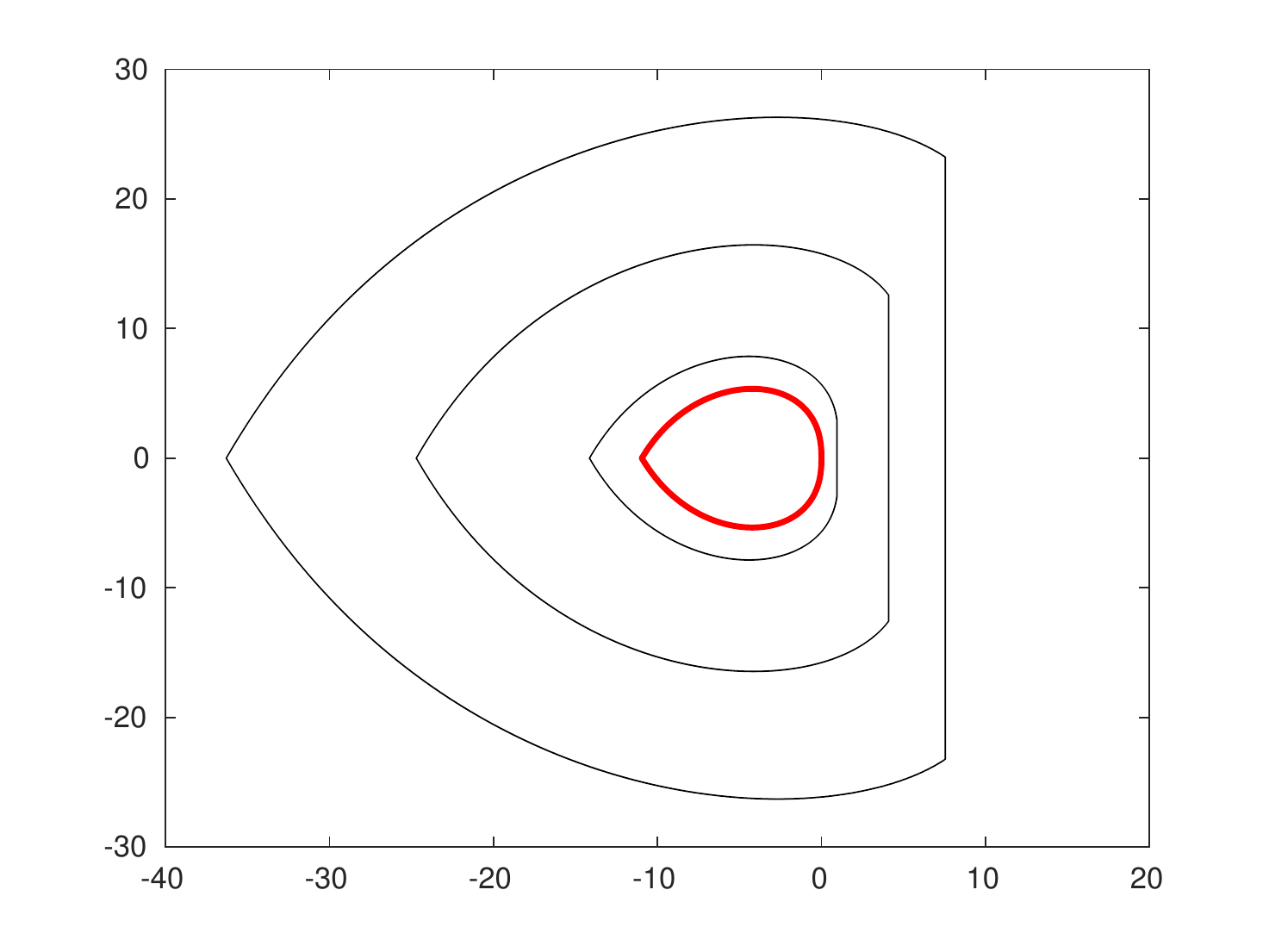}
     \put (47,73) {$\displaystyle \nu=1.3$}
		\put (50,-2) {$\displaystyle \mathrm{Re}(z)$}
		\put (2,30) {\rotatebox{90}{$\displaystyle \mathrm{Im}(z)$}}
     \end{overpic}
  \end{minipage}
  \hfill
  \begin{minipage}[b]{0.48\textwidth}
    \begin{overpic}[width=\textwidth]{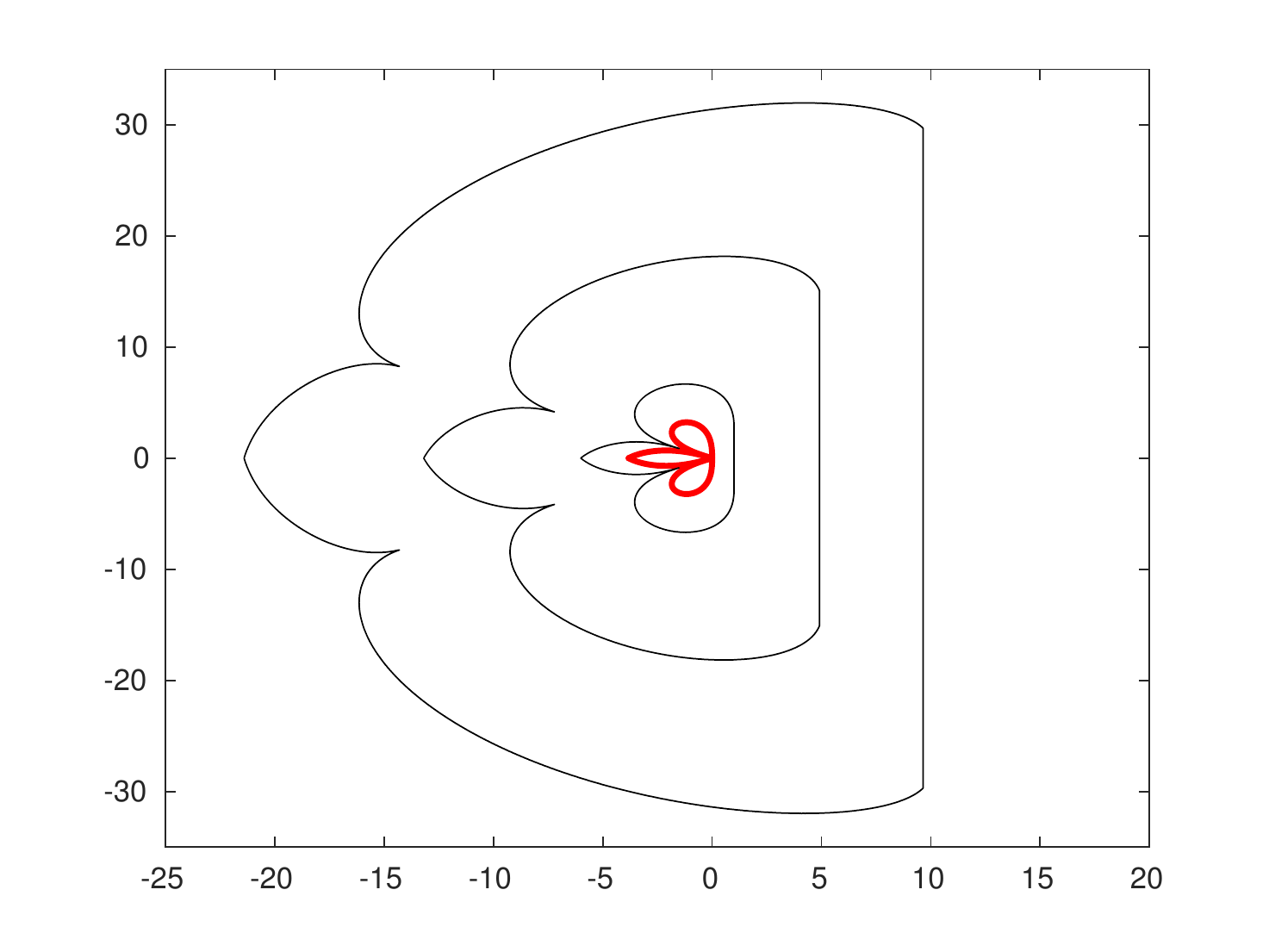}
    \put (47,73) {$\displaystyle \nu=1.6$}
     \put (50,-2) {$\displaystyle \mathrm{Re}(z)$}
		\put (2,30) {\rotatebox{90}{$\displaystyle \mathrm{Im}(z)$}}
     \end{overpic}
  \end{minipage}
  \caption{Examples of the curves $r_{\nu}^*(\theta,\epsilon)$ that bound $\|T(z)^{-1}\|$ for $2\sqrt{\max_{x\in[-1,1]}\frac{a(x)}{b(x)}}=5$. The red lines correspond to $\epsilon=0$ (which bounds the generalised spectrum) and the black lines correspond to $\epsilon=1,5$ and $10$. For $\nu\in(0,1)$, the curves are asymptotic to straight lines, for $\nu=1$ they are parabolic and for $\nu\in(1,2)$ they are bounded.}
\label{fig:pseudospectra}
\end{figure}

\subsection{Computing the resolvent and numerical convergence}
\label{sec:solve_desc}

To compute $y(x,t)$ via the method in \S \ref{cont_descrip}, we use the ultraspherical spectral method of Olver and Townsend \cite{Olver_SIAM_Rev} to apply $T(z)^{-1}$ and compute $\hat y(x,z)$. Explicitly, for each quadrature point $z_j$, we solve the corresponding linear system
$$
T(z_j)\hat y(x,z_j)=K(z_j).
$$
The method of \cite{Olver_SIAM_Rev} expands the solution of the relevant linear system in terms of Chebyshev polynomials of the first kind. To represent differential operators with variable coefficients, different bases of ultraspherical polynomials are used for the range and domain of the operator. This allows an almost banded (where filled in rows correspond to the boundary conditions) matrix representation of the operator that can be solved stably in optimal linear complexity $\mathcal{O}(m^2n)$ for truncation parameter $n$ and bandwidth $m$. We can then compute the residual of the computed solution (with respect to the full infinite matrix) and combine it with the results of Theorem \ref{thm:RES_BOUND} to bound the error of the approximation of $\hat y(x,z_j)$. We adaptively increase $n$ until this bound is below the required error tolerance $\eta$ which appears in \S \ref{cont_descrip}.\footnote{It is also possible to gain verified error control using interval arithmetic. Error control is obtained by using sparse approximations and suitable rectangular truncations corresponding to bounding tails of expansions of coefficients and solutions in ultraspherical polynomials. See \cite{brehard2018validated}, for an interval arithmetic implementation of the ultraspherical spectral method. In the current paper, we have not used interval arithmetic.} In all of the examples of this paper, the required $n$ is at most a few hundred. In particular, upon chasing the explicit constants in \eqref{analytic_bound}, the following is immediate.

\begin{theorem}[Computation of solution with error control]\label{thrown_in_theorem}
Given a fixed class of boundary conditions (as in \S \ref{sec:phys_setup}), there exists an algorithm $\Gamma$ such that the following holds. For any PDE parameters $a,b,\tilde \rho$, $\nu\in(0,2)$, forcing $F$ (whose Laplace transform can be evaluated to arbitrary accuracy), $\epsilon>0$, $t>0$, and sufficiently regular $y_0,y_1$,
$$
\left\|\Gamma(a,b,\tilde \rho,\nu,F,y_0,y_1)-y(\cdot,t)\right\|_{\mathcal{H}^2_{\mathrm{BC1}}}\leq \epsilon,
$$
where $y$ is the solution of \eqref{NL_gen_form} with $y(\cdot,t)=y_0$ and $y_t(\cdot,t)=y_1$.
\end{theorem}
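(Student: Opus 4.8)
The plan is to assemble the theorem as a bookkeeping exercise: combine the three sources of error (quadrature/contour truncation, inexact evaluation of $\hat y(z_j)$, and the spectral solve of the linear systems), each of which has already been controlled in the excerpt, and then verify that all the constants appearing can actually be computed from the given data. First I would fix the class of boundary conditions, so that the Hilbert spaces $\mathcal{H}^4_{\mathrm{BC}}$, $\mathcal{H}^2_{\mathrm{BC1}}$, $\mathcal{H}^2_{\mathrm{BC2}}$ and the pencil $T(z)$ of \eqref{def:T} are determined. Given $a,b,\tilde\rho,\nu$, I would compute $2\sqrt{\max_{x}a(x)/b(x)}$ and hence, via \eqref{r_cond_gen_pseudo0} and the monotonicity remark following Theorem \ref{thm:RES_BOUND}, obtain the bounding curve $r_\nu^*(\theta,\epsilon)$; this identifies a sector $S_{\delta,0}$ (with an explicit $\delta\in[0,\pi/2)$ depending only on $\nu$ in Case 1, or a parabolic region $P_{\delta,0}$ when $\nu=1$, or a bounded region when $\nu\in(1,2)$) outside of which, by Theorem \ref{thm:RES_BOUND}, $T(z)^{-1}$ exists with $\|T(z)^{-1}\|\le c\epsilon^{-1}/(1+|z|)$ and is bounded by $\epsilon^{-1}$ into $\mathcal{H}^2_{\mathrm{BC1}}$.

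Next I would feed this region into Algorithm \ref{alg:spec_meas} (hyperbolic contour; or Algorithm \ref{alg:spec_meas2} in the parabolic case), invoking the error bound \eqref{analytic_bound} with $\sigma=0$ and $t_0=t_1=t$ (so $\Lambda=1$): the quadrature term decays like $\exp(-cN/\log N)$, and the term due to inexact evaluation of $\hat y$ is proportional to the tolerance $\eta$. The key point is that $C$ in \eqref{analytic_bound} is governed by bounds on $\|\hat y(z)\|$ along and near the contour, which Theorem \ref{thm:RES_BOUND} supplies explicitly (using that $K(z)$ is built from $\hat F(z)$, $y_0$, $y_1$ via \eqref{laplace_form2}, all of which are available with error control); thus $C$ is computable. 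Choosing $N$ large enough makes the quadrature term at most $\epsilon/3$. For the middle term, I would pick $\eta$ small enough that it contributes at most $\epsilon/3$. Finally, for each of the $2N+1$ quadrature points $z_j$, I would run the ultraspherical spectral method of \cite{Olver_SIAM_Rev} to solve $T(z_j)\hat y(\cdot,z_j)=K(z_j)$, adaptively increasing the truncation $n$ and using the computed residual together with the resolvent bound of Theorem \ref{thm:RES_BOUND} (mapping into $\mathcal{H}^2_{\mathrm{BC1}}$) to certify an $\mathcal{H}^2_{\mathrm{BC1}}$-error below $\eta$ at each node; summing the $2N+1$ certified node errors against the quadrature weights $w_j$ contributes the final $\epsilon/3$. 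Adding the three pieces gives $\|\Gamma(a,b,\tilde\rho,\nu,F,y_0,y_1)-y(\cdot,t)\|_{\mathcal{H}^2_{\mathrm{BC1}}}\le\epsilon$, and $\Gamma$ is the algorithm that performs exactly these steps.

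The main obstacle is not any single estimate — all of them are already in hand — but rather verifying that the whole pipeline is genuinely an \emph{algorithm} with the quantifier structure claimed: one fixed $\Gamma$ working for all admissible parameters. This requires (i) checking that the region-identification step (computing $\delta$ and the constant $c$ from $a,b,\tilde\rho,\nu$) is effective and uniform, including handling the three qualitative cases of $r_\nu^*$; (ii) confirming that the adaptive spectral solve terminates, i.e. that the residual-based certified bound actually falls below $\eta$ as $n\to\infty$ (this follows from the exponential convergence and optimal complexity of the ultraspherical method for the smooth variable coefficients $a,b$, combined with the resolvent bound, but must be stated); and (iii) ensuring the "sufficiently regular $y_0,y_1$" hypothesis is exactly what is needed to make $K(z)$ well-defined with error control and to place the true solution $y(\cdot,t)$ in $\mathcal{H}^2_{\mathrm{BC1}}$ so the norm on the left-hand side makes sense. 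Once these points are articulated, the proof is indeed "immediate upon chasing the explicit constants in \eqref{analytic_bound}," as the statement asserts.
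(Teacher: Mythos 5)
Your proposal follows essentially the same route as the paper, which offers no separate proof but declares the result immediate from assembling the quadrature bound \eqref{analytic_bound}, the resolvent/invertibility bounds of Theorem \ref{thm:RES_BOUND} (including the map into $\mathcal{H}^2_{\mathrm{BC1}}$), and the residual-certified adaptive ultraspherical solve. The only cosmetic point is that your last two $\epsilon/3$ budgets are really the same error source (the $\eta$-term in \eqref{analytic_bound} already propagates the certified node errors through the quadrature sum), which merely makes your bound slightly conservative.
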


\begin{remark}
Here `sufficiently regular $y_0,y_1$' means that a Chebyshev series for $K(z)$ can be computed with error control. For example, if $\nu>1$ and we use the Caputo definition of the fractional derivative, then $K(z)$ involves a term of the form $(by_0'')''/\tilde \rho$. Hence, $y_0$ must have a fourth derivative whose Chebyshev series can be approximated.
\end{remark}

As a concrete example to demonstrate convergence (we do not analyse this model physically), we consider the case of
$$
a=\cosh(x),\quad b=\sin(\pi x)+2,\quad {\tilde\rho}=\tanh(x)+2,
$$
for various $\nu$. We consider the initial conditions
$$
y(x,0)=\sin(2\pi x)(1-x^2)(1-x),\quad \pderiv{y}{t}(x,0)=0,
$$
where $x_0=-1$ is a clamped end and $x_0=1$ is a simply supported end, and the forcing
$$
F(x,t)=\cos(20t)\sin(\pi x).
$$
Figure \ref{fig:sparsity} shows a typical sparsity pattern for the matrix representing the operator $T(z)$.

We apply Algorithm \ref{alg:spec_meas} and Algorithm \ref{alg:spec_meas2} with $t_0=1$ and $t_1=10$, where $\eta$ is chosen so that its contribution to the error of the computed solution is negligible compared to the quadrature error. For Algorithm 1, we set $\beta=2$, chose $\sigma=\beta/t_1$ and then selected the minimum $\delta$ so that the generalised spectrum of $T(z)$ lies in the exterior of $S_{\delta,\sigma}$. Such a $\delta$ is easily computed using the formula \eqref{r_cond_gen_pseudo0} for $r_{\nu}^*(\theta,0)$. For Algorithm 2, if $\nu\geq1$, we set $\sigma=0$ and $\delta=1/(4\max_{x\in[-1,1]}\frac{a(x)}{b(x)})$. If $\nu<1$, we set $\sigma=0$ and then choose $\delta$ so that the corresponding parabola intersects the curve defined by $r_{\nu}^*(\theta,0)$ at $\mathrm{Re}(z)=10^{-16}/t_0$. Strictly speaking, the parabola does not bound the possible location of singularities of $T(z)^{-1}$ analysed in \S \ref{sec:finding_spec}. However, in practice, this is of little consequence since we can deform the contour of integration for $\mathrm{Re}(z)\ll 0$ and the change in the computed solution is negligible (owing to the choice $\mathrm{Re}(z)=10^{-16}/t_0$).

\begin{figure}[t!]
  \centering
  \begin{minipage}[b]{0.49\textwidth}
    \begin{overpic}[width=\textwidth]{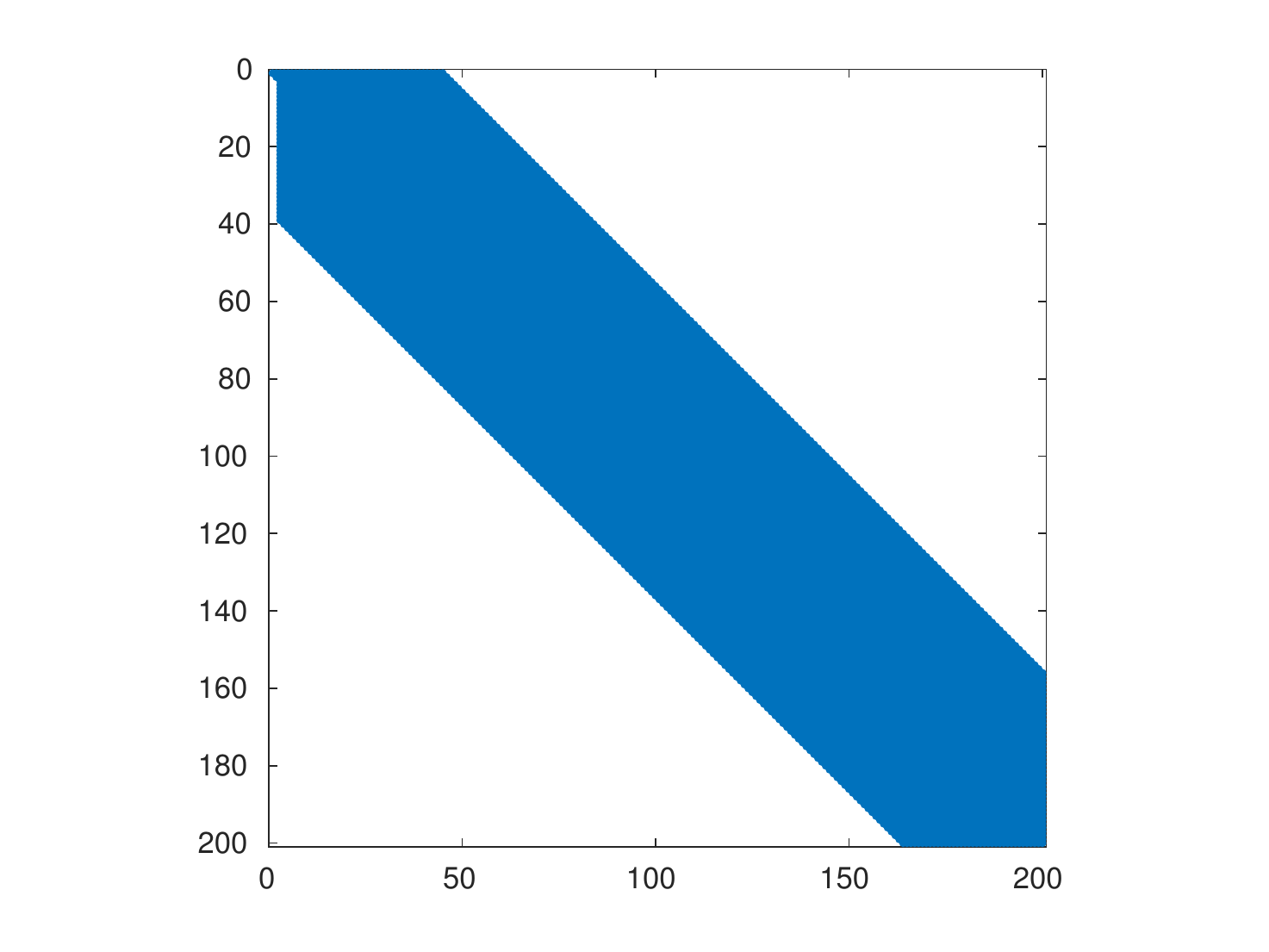}
     \end{overpic}
  \end{minipage}
  \hfill
  \begin{minipage}[b]{0.49\textwidth}
    \begin{overpic}[width=\textwidth]{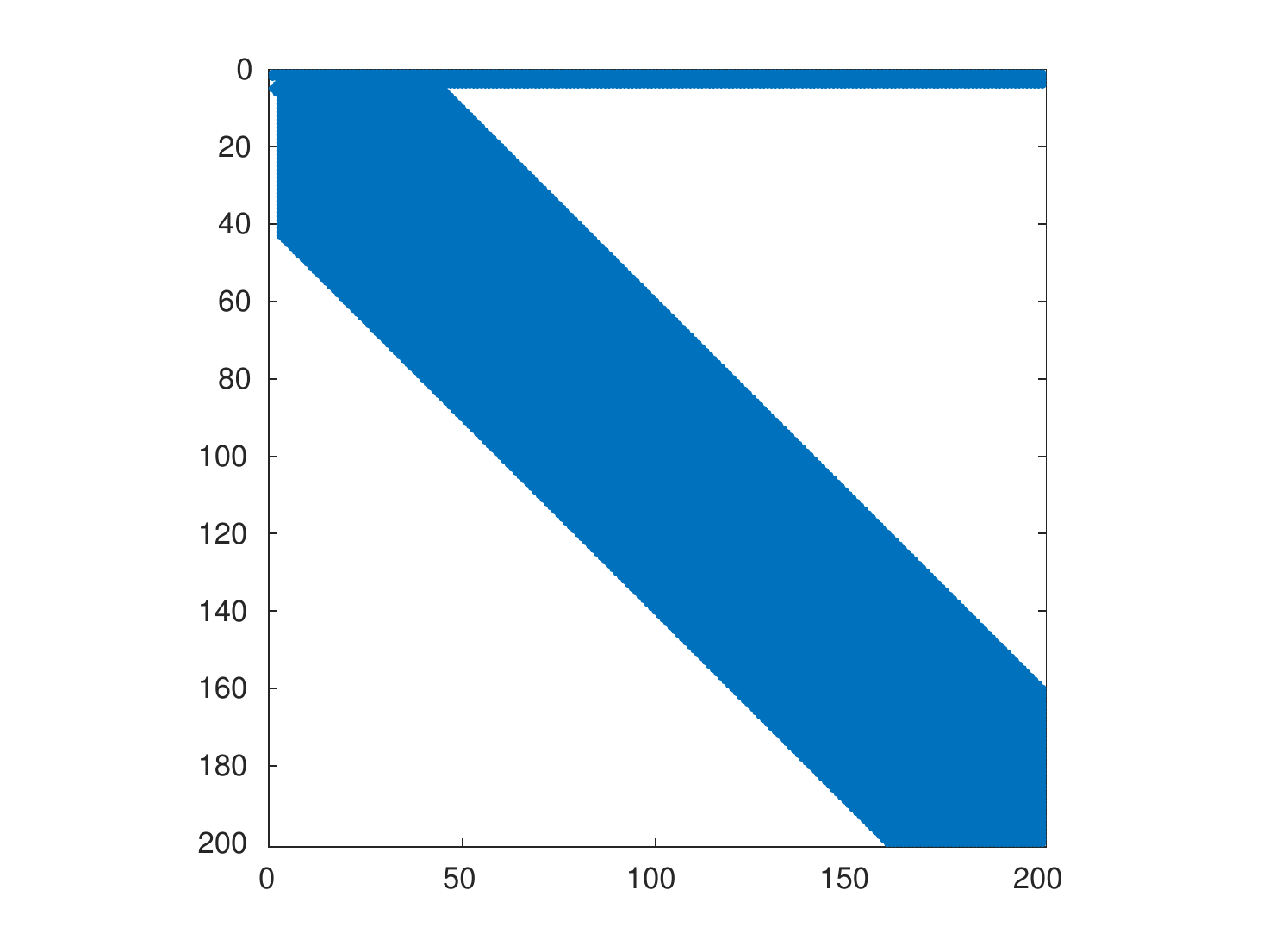}
		\end{overpic}
  \end{minipage}
  \caption{Sparsity pattern of the matrix representing $T(z)$ for the example in \S \ref{sec:solve_desc} (finite $200\times 200$ section of the infinite matrix shown). Left: Sparsity pattern before adding the boundary conditions. Right: Sparsity pattern of the almost banded matrix after adding the boundary conditions (which correspond to the filled in top four rows).}
\label{fig:sparsity}
\end{figure}

Figure \ref{fig:error} shows the quadrature error for a range of representative $\nu$, where the error is estimated by comparing to a larger $N$. We see that smaller $\nu<1$ favours the hyperbolic contour (Algorithm 1), whereas larger $\nu\uparrow 1$ favours the parabolic contour (Algorithm 2). This reflects the shape of the curve bounding the singularities of $T(z)^{-1}$, that was analysed in \S \ref{sec:finding_spec}. In general, the best choice of contour also depends on the desired accuracy. Note also that both quadrature rules are stable as $N$ increases. We found the hyperbolic contour to have a more consistent exponential convergence over different parameter choices (presumably due to the optimisation problem in Algorithm 2) and so use it in the numerical experiments of \S \ref{sec:phys_res}. Figure \ref{fig:examsol} shows corresponding computed solutions at different times.

\begin{figure}[t!]
  \centering
  \begin{minipage}[b]{0.49\textwidth}
    \begin{overpic}[width=\textwidth]{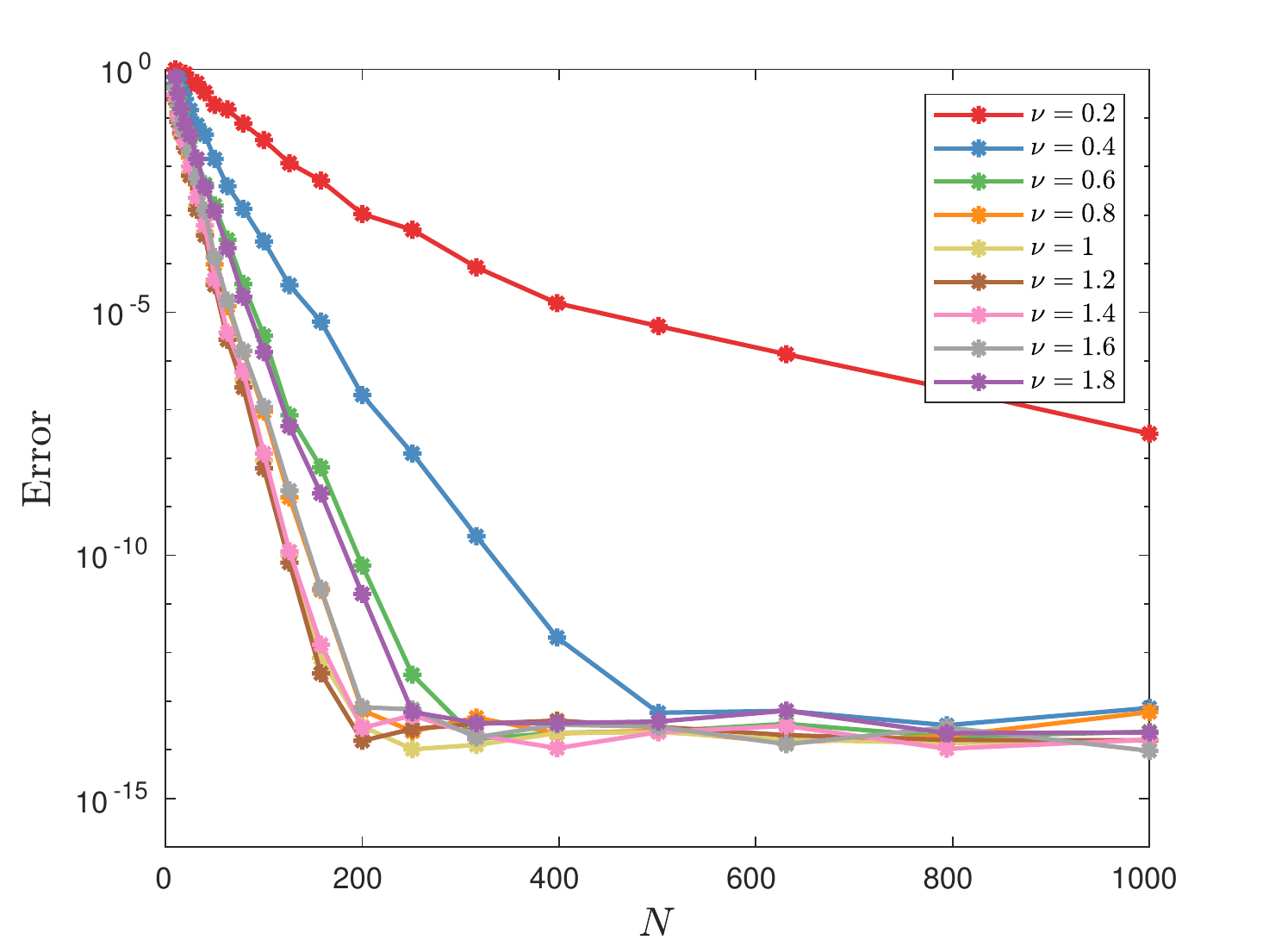}
		\put (37,73) {$\displaystyle \text{Algorithm 1}$}
     \end{overpic}
  \end{minipage}
  \hfill
  \begin{minipage}[b]{0.49\textwidth}
    \begin{overpic}[width=\textwidth]{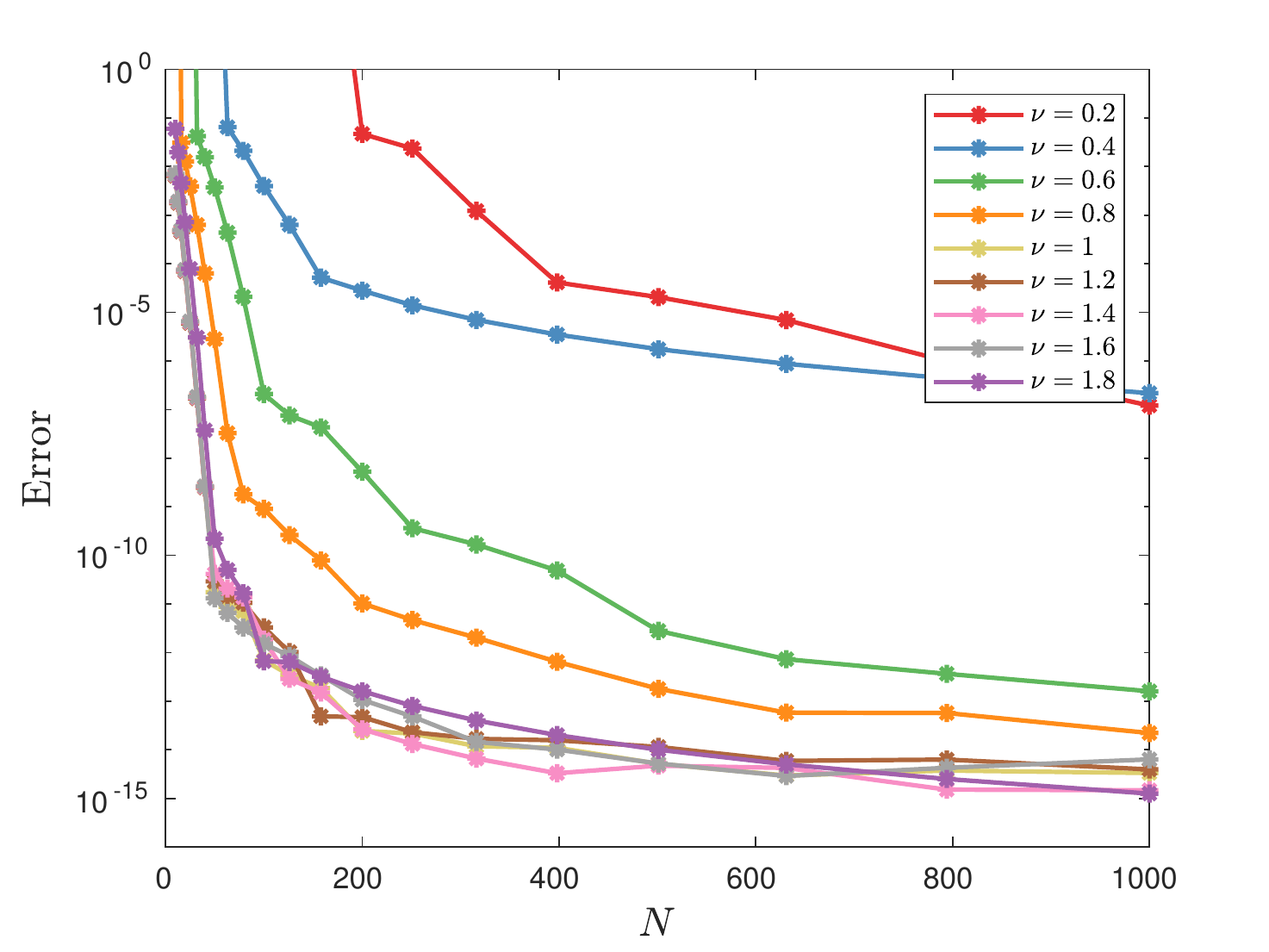}
		\put (37,73) {$\displaystyle \text{Algorithm 2}$}
		\end{overpic}
  \end{minipage}
  \caption{Maximum quadrature error over $t\in[1,10]$ for the $L^2$ norm of the solution.}
\label{fig:error}
\end{figure}

\begin{figure}[t!]
  \centering
  \begin{minipage}[b]{0.32\textwidth}
    \begin{overpic}[width=\textwidth]{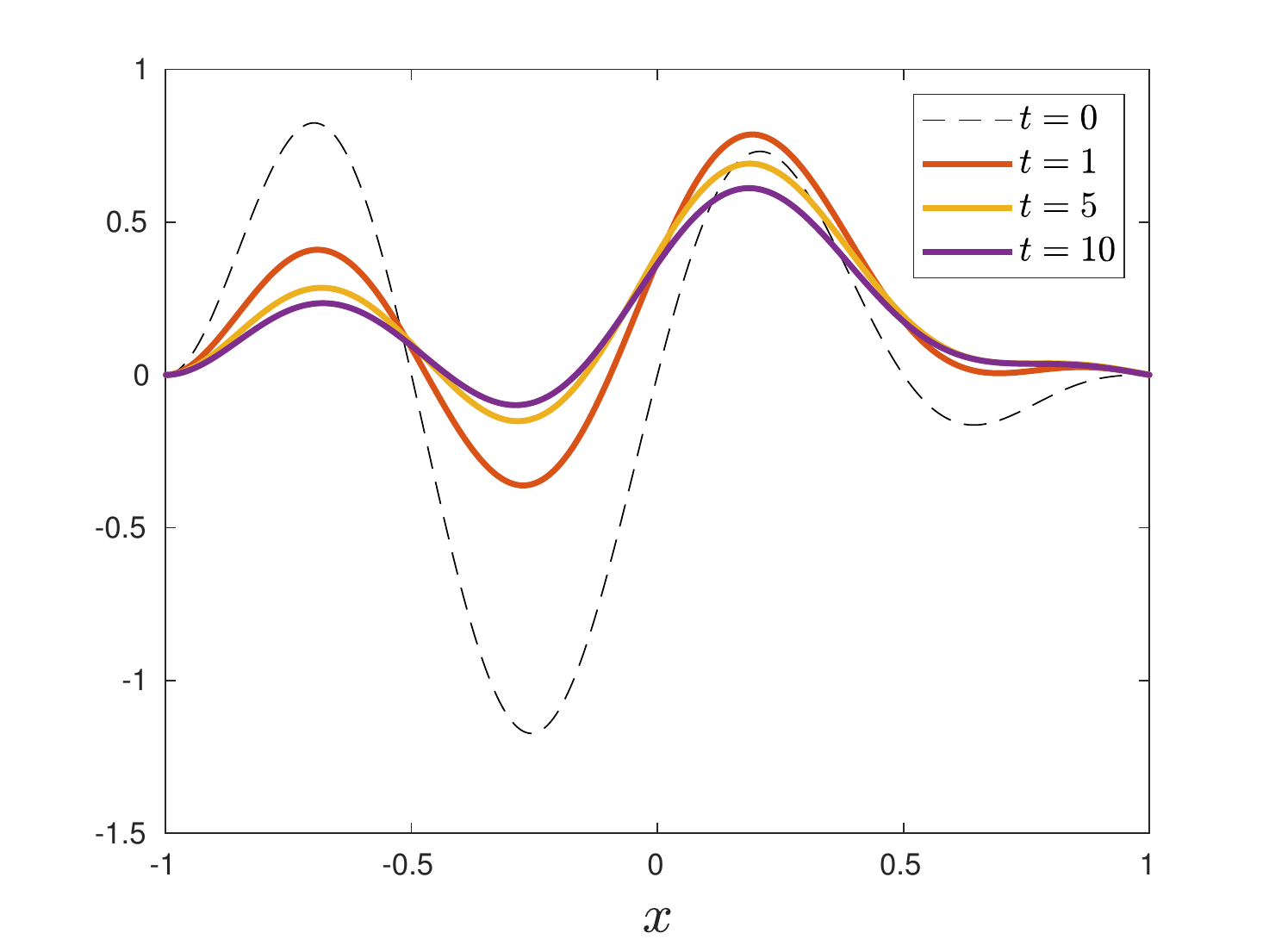}
		\put (42,73) {$\displaystyle \nu=0.4$}
     \end{overpic}
  \end{minipage}
  \hfill
  \begin{minipage}[b]{0.32\textwidth}
    \begin{overpic}[width=\textwidth]{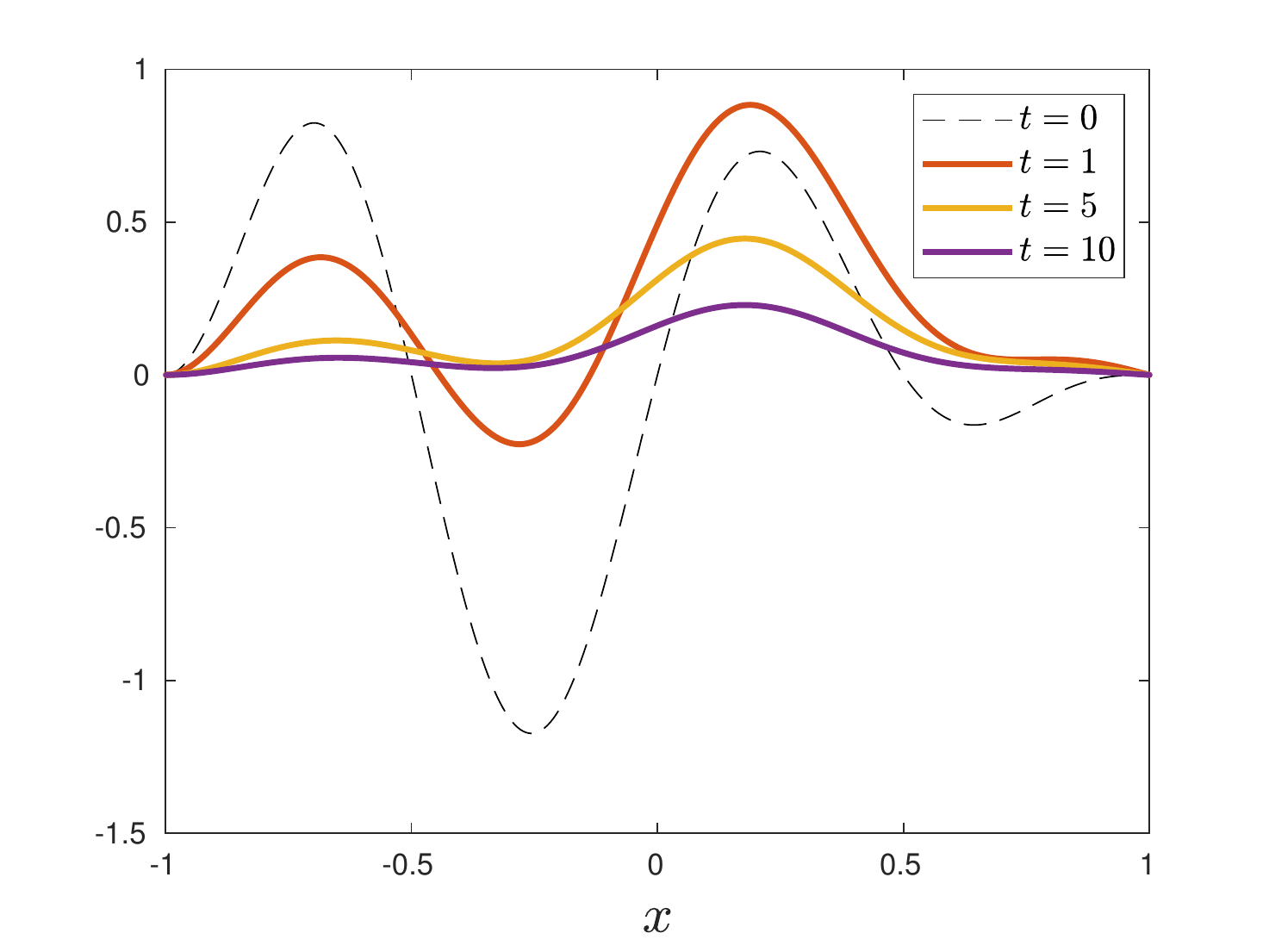}
		\put (42,73) {$\displaystyle \nu=0.8$}
		\end{overpic}
  \end{minipage}
	\hfill
  \begin{minipage}[b]{0.32\textwidth}
    \begin{overpic}[width=\textwidth]{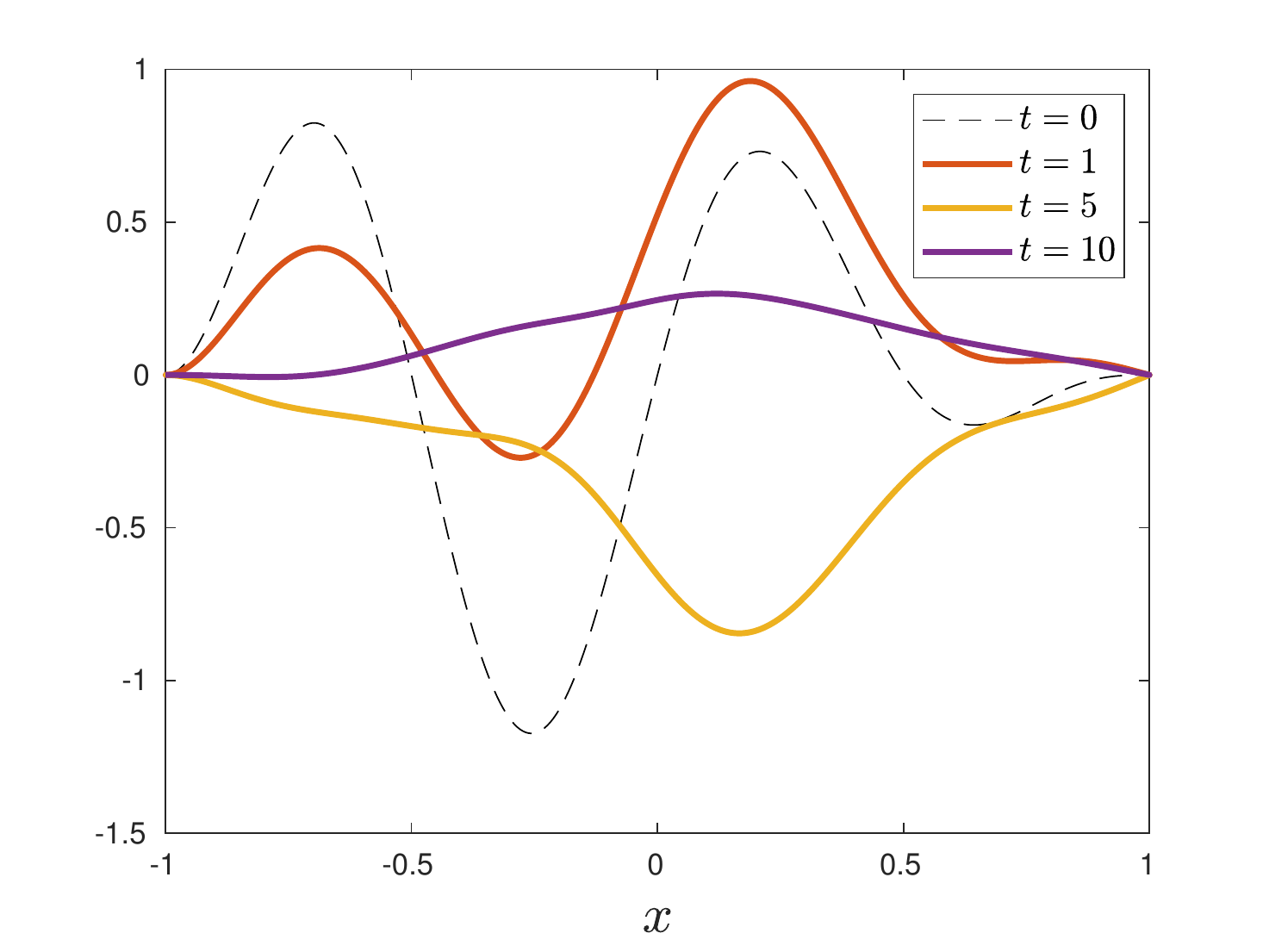}
		\put (42,73) {$\displaystyle \nu=1.6$}
		\end{overpic}
  \end{minipage}
  \caption{Computed solutions for the example in \S \ref{sec:solve_desc}.}
\label{fig:examsol}
\end{figure}

\section{Results}
\label{sec:phys_res}

In what follows, we follow the procedure outlined in \S \ref{sec:solve_desc} with an overall error bound (in non-dimensionalised parameters and with respect to the standard $L^2(-1,1)$ norm) on the computed solution of $10^{-8}$. In principle, it is possible to rigorously achieve such a bound by computing the explicit constants in relations such as \eqref{analytic_bound}. In practice, this is achieved by choosing $\eta$ sufficiently small so that the dominant error is the quadrature error, and then increasing $N$ adaptively until the estimated quadrature error is below $10^{-8}$. There are at least three factors that contribute to the efficiency of the overall method:
\begin{itemize}
	\item The computation of $T(z)^{-1}$ is rapid owing to the sparse spectral method used, which converges exponentially in the truncation parameter $n$ and has $\mathcal{O}(n)$ complexity.
	\item The quadrature rule converges rapidly (as well as being stable) with an error bounded by $\mathcal{O}(\exp(-cN/\log(N)))$ for $N$ quadrature points.
	\item The values of the Laplace transform (solving the linear systems at quadrature points) can be computed in parallel across different quadrature points and only need to be computed once, thereafter being reused for different times $t\in[t_0,t_1]$.
\end{itemize}

\subsection{Example 1}

Our first physical example considers a uniform viscoelastic beam of length $1\mathrm{m}$, simply supported at both ends, as described in \cite{Xu2020}. The relevant parameters and dimensions are:
\begin{align*}
    \rho A = 0.818 \hspace{1mm}\mathrm{kg}/\mathrm{m},\quad\nu = 0.64,\quad I=8.33\times 10^{-6}\hspace{1mm}\mathrm{m}^{4},\\E_{0}=5.04\times 10^{7}\hspace{1mm}\mathrm{Pa},\quad E_{1}=2.27\times 10^{5}\hspace{1mm}\mathrm{Pa},\quad F(x,t)=F_{0}\sin(\pi x/l)\sin(\omega t)\hspace{1mm}\mathrm{kg}/\mathrm{s}^2,
\end{align*}
where $F_{0}$ is dimensional. Here, the model in \cite{Xu2020} has a beam of length $l=1\mathrm{m}$ lying in the region $x\in[0,l]$, and the natural frequency of the beam is $\omega_{1}=\sqrt{E_{0}I/(\rho A)}$.

Since the formulation of our theory non-dimensionalises lengths on the semi-chord to a beam in the region $x\in[-1,1]$, we first present the fully non-dimensionalised form of the governing equation \eqref{eq:gov}. We denote non-dimensional parameters by a superscript $^*$, defined as: $\rho^{*}=A\rho/(\rho_0)$, $(x^*,y^*)=(x,y)/L$, $t^*=t/T$. Here, $\rho_{0}$ is the mass per unit area, $L$ is the semichord of the plate, and $T$ is the time-scale (taken as $T=1/f$ where $f$ is the frequency $1$Hz).
Under such scalings, the governing equation reduces to
$$
\rho^{*}\pderiv{^2y^*}{t^{*2}}+\pderiv{^{2}}{x^{*2}}\left[E_{0}^{*}I^{*}\pderiv{^{2}y^*}{x^{*2}}+E_{1}^*I^*\mathcal{D}^{\nu}_{\mathrm{C},t^*}\pderiv{^{2}y^*}{x^{*2}} \right]=F^*(x^*,t^*)
$$
Our test parameters reduce to
$$
    \rho^* = 1,\quad\nu = 0.64, \quad  E_{0}^*I^*=821.2, \quad E_{1}^*I^*=3.70, \quad F^*(x^*,t^*)=F_{0}^{*}\sin(\pi (x^*-1))\sin(\omega^* t^*),
$$
where we assume the beam has a width of $10\%$ of the length to calculate the mass per unit area, and due to linearity, we shall set $F_{0}^{*}=1$ without loss of generality.

First, we verify the numerical method and consider the case of an initially undeformed beam, with $y^{*}(x^*,0), y_{t^*}^{*}(x^*,0)=0$. In Figure \ref{fig:Case1Compare} we present the computed solution at $x^*=0.25$ for $0.1\leq t^*\leq1$ (left) and $t^*=5$ for $-1\leq x^*\leq 1$ (right), whilst comparing to the analytic solution given by
\begin{equation}
y^{*}(x^*,t^*)=F_{0}^{*}\times \mathrm{Im}\left[\frac{e^{i\omega^* t^*}\sin(\pi (x^{*}-1))}{\pi^{4}E_{0}^*I^*+\pi^{4}E_{1}^*I^*(i\omega^*)^{\nu}-\omega^{*2}}\right].
\end{equation}
In this case, as expected from the adaptive algorithm, the $L^2$ errors of the solution were bounded by the user-specified tolerance (in this case $10^{-8}$).

\begin{figure}[t!]
  \centering
  \begin{minipage}[b]{0.49\textwidth}
    \begin{overpic}[width=\textwidth]{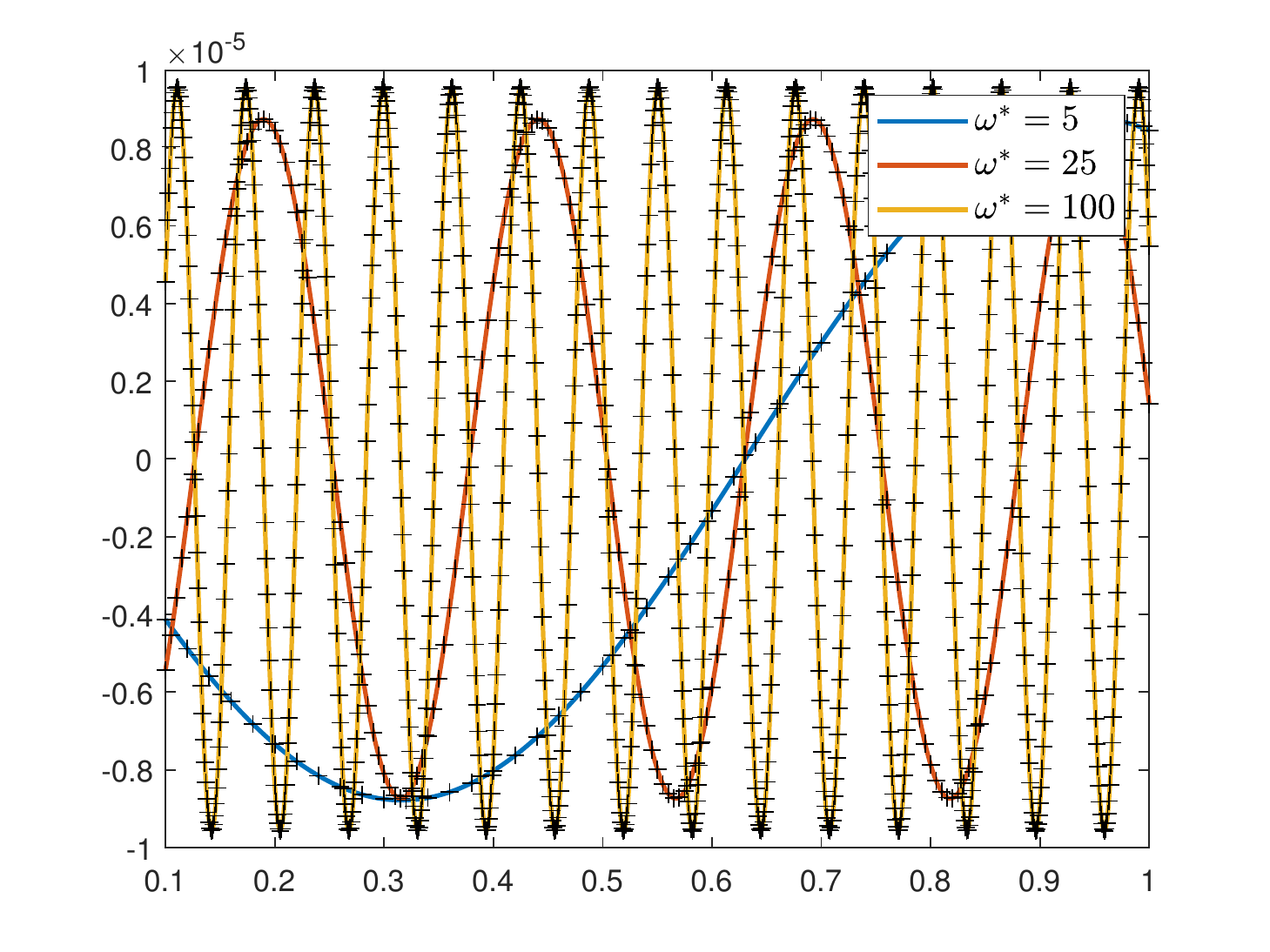}
		\put (50,-2) {$\displaystyle t^*$}
		\put (2,30) {\rotatebox{90}{$\displaystyle y^{*}(0.25,t^*)$}}
     \end{overpic}
  \end{minipage}
  \hfill
  \begin{minipage}[b]{0.49\textwidth}
    \begin{overpic}[width=\textwidth]{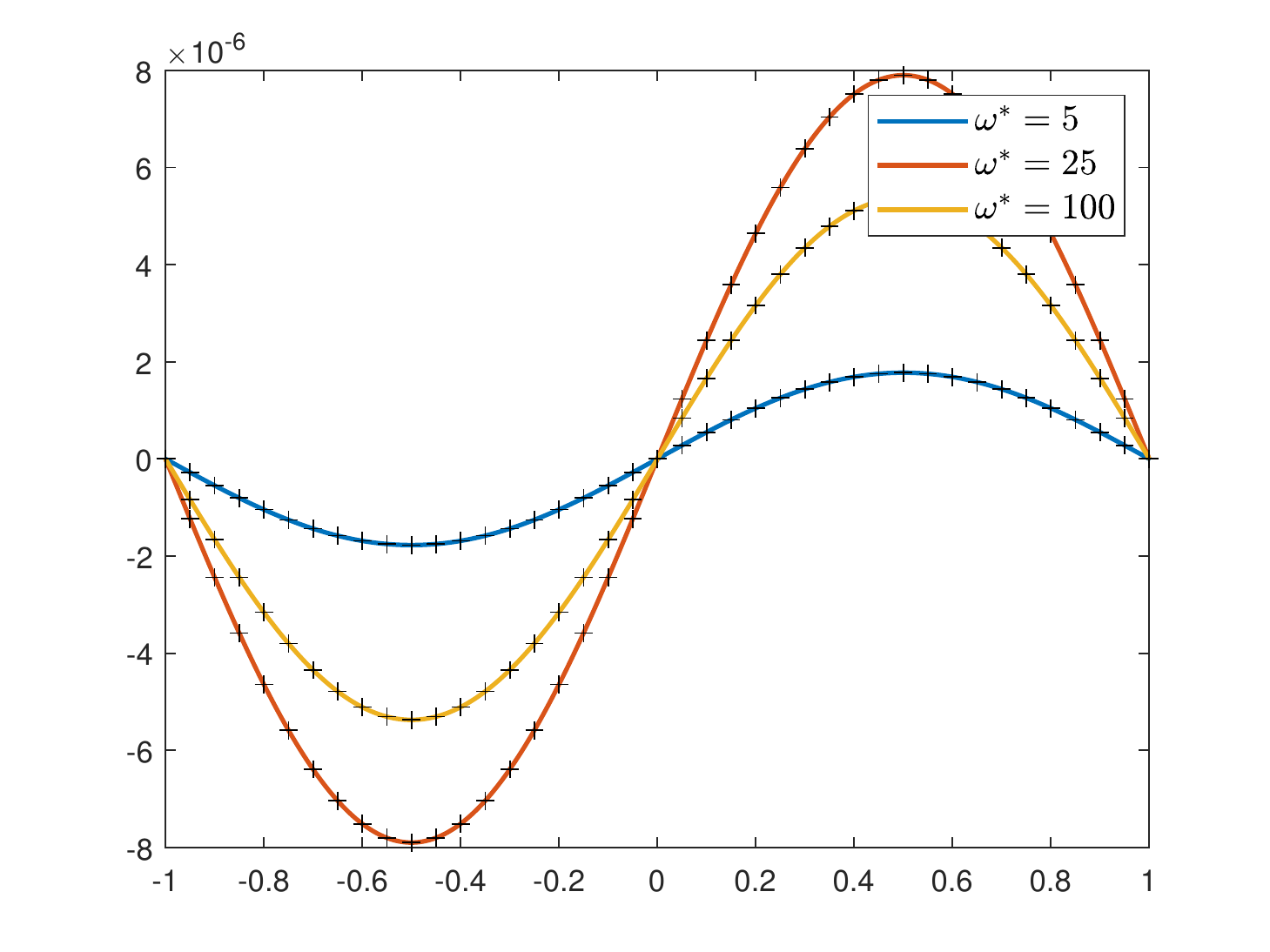}
		\put (50,-2) {$\displaystyle x^*$}
		\put (2,30) {\rotatebox{90}{$\displaystyle y^{*}(x^*,5)$}}
		\end{overpic}
  \end{minipage}
  
  \caption{Comparison of numeric (lines) and analytic (markers) solutions for $\omega^{*}=5,25,100$. Left: Evaluation at $x^{*}=0.25$ for different $t^*$. Right: Evaluation at $t^{*}=5$ for different $x^*$.}
\label{fig:Case1Compare}
\end{figure}

Whilst an analytic solution is available in the initially undeformed case, the same cannot be said for a more general forcing or more general initial conditions. Furthermore, for non-zero initial conditions, previous numerical approaches using the Riemann--Liouville definition of the fractional derivative are inaccurate as they do not account for the initial data properly. Our implementation with the Caputo derivative captures the required behaviour. We illustrate in Figure \ref{fig:Case1Initial} an example deformation for the same parameters as above, but now with an initial asymmetric deformation of
\begin{equation}
y^{*}(x^{*},0)=\sin^{2}(2\pi x^{*})(1+x^{*})(1-x^{*})^{2}
\end{equation}
and $y_{t^{*}}^{*}(x^{*},0)=0$, for fractional parameters $\nu=0.32$ and $\nu=0.64$. For this example we set the external forcing $F^{*}(x^{*},t^{*})=0$. As expected, the initial deformation generates more rapid and intense (greater amplitude) vibration in the plate with weaker damping ($\nu=0.32$), and these vibrations last for longer. The energy in the system is defined by
$$
E^{*}(t^{*})=\frac{1}{2}\int_{-1}^1 a(x^{*})|y_{x^{*}x^{*}}^{*}(x^{*},t^{*})|^2+\rho(x^{*})|y_{t^{*}}^{*}(x^{*},t^{*})|^2dx^{*}.
$$
To compute this quantity, we use the approximation
\begin{equation}
\label{bromwich_quad_example_22}
y_{t^{*}}^{*}(\cdot,t^{*})=q_{t^{*}}(t^{*})\approx \frac{h}{2\pi i}\sum_{j=-N}^Nz_je^{z_jt^{*}}\hat q(z_j)\gamma'(jh),\quad z_j=\gamma(jh),
\end{equation}
which is analogous to \eqref{bromwich_quad_example}. To compute $y_{x^{*}x^{*}}^{*}(x^{*},t^{*})$, we note that $y^{*}(\cdot,t^{*})$ is computed as a (finite) Chebyshev series and hence we differentiate term by term. This is justified by Theorem \ref{thm:RES_BOUND}, which can be used to provide error bounds for the computed solution in the space $\mathcal{H}^2_{\mathrm{BC1}}$.

The energy for the two different plates is illustrated in Figure \ref{fig:Case1Energy} (left), indicating a monomial-type decrease of total energy, $E^{*}(t^*)\propto {t^*}^{-2\nu}$. In fact, for $F^{*}\equiv 0$ and $y^{*}_{t^{*}}(x^{*},0)\equiv 0$, the energy is given by
\begin{equation}
\label{asymp_energy}
E^{*}(t^{*})=\underbrace{\frac{\sin^2(\pi\nu)\Gamma(\nu)^2}{2\pi^2}\frac{b^2}{a}\int_{-1}^1|y_{x^{*}x^{*}}^{*}(x^{*},0)|^2dx^{*}}_{=:e_1}\times t^{*-2\nu}+\mathcal{O}(t^{*-3\nu}),\quad \text{as } t^{*}\rightarrow\infty,
\end{equation}
the derivation of which may be found in Appendix \ref{app}. We have shown the leading asymptotics as dashed lines in Figure \ref{fig:Case1Energy} (left), which agree with our computed values. As expected, the energy of the weakly-damped, $\nu=0.32$, system does not decay as rapidly as the more strongly-damped, $\nu=0.64$, system. The differences between computed energy values and the leading term of \eqref{asymp_energy} are shown in Figure \ref{fig:Case1Energy} (right), along with the expected $\mathcal{O}(t^{*-3\nu})$ behaviour.

\begin{figure}[t!]
  \centering
  \begin{minipage}[b]{0.49\textwidth}
    \begin{overpic}[width=\textwidth]{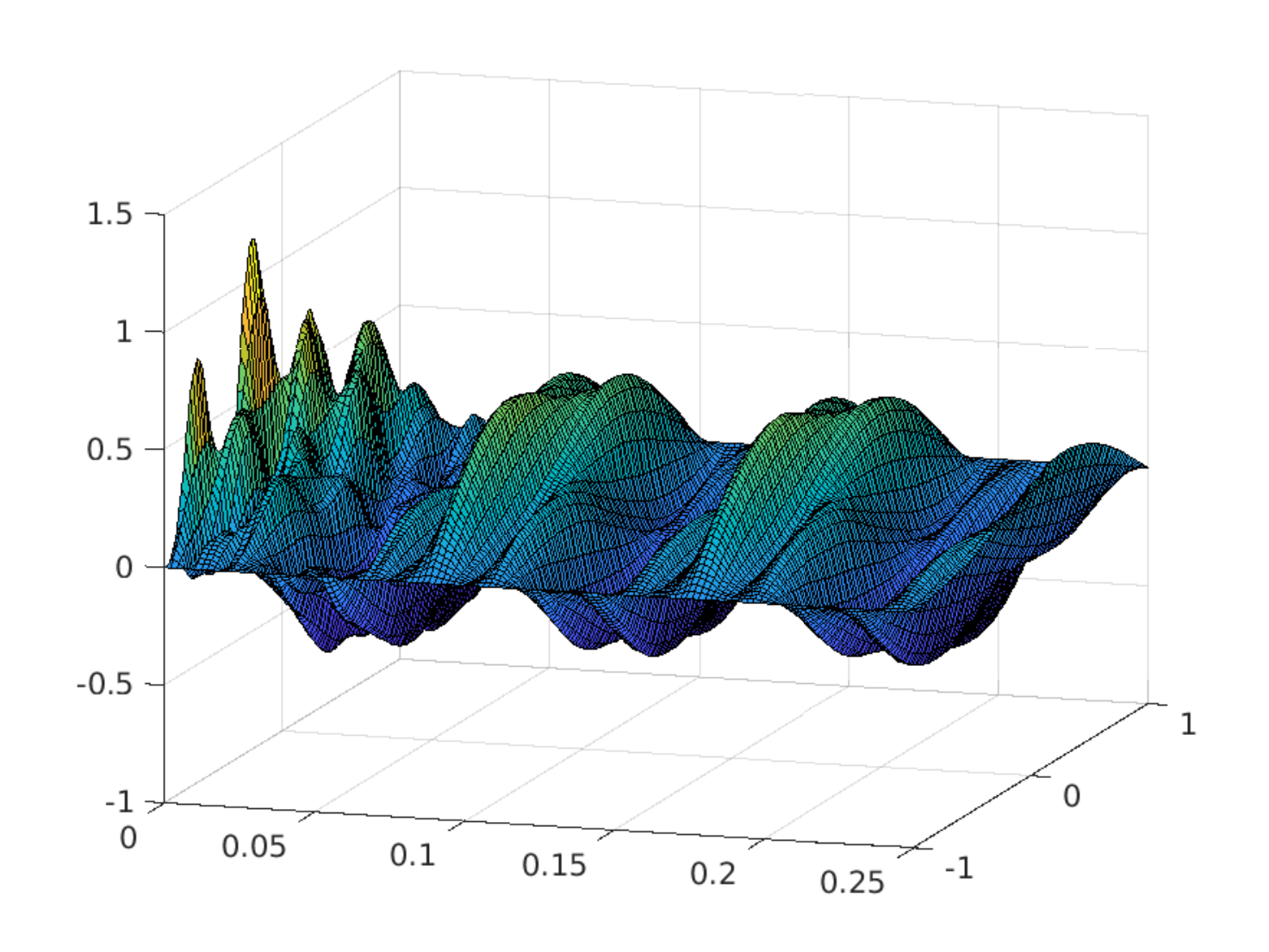}
		\put (45,0) {$\displaystyle t^*$}
		\put (87,9) {$\displaystyle x^*$}
		\put (45,70) {\rotatebox{0}{$\nu=0.32$}}
     \end{overpic}
  \end{minipage}
  \hfill
  \begin{minipage}[b]{0.49\textwidth}
    \begin{overpic}[width=\textwidth]{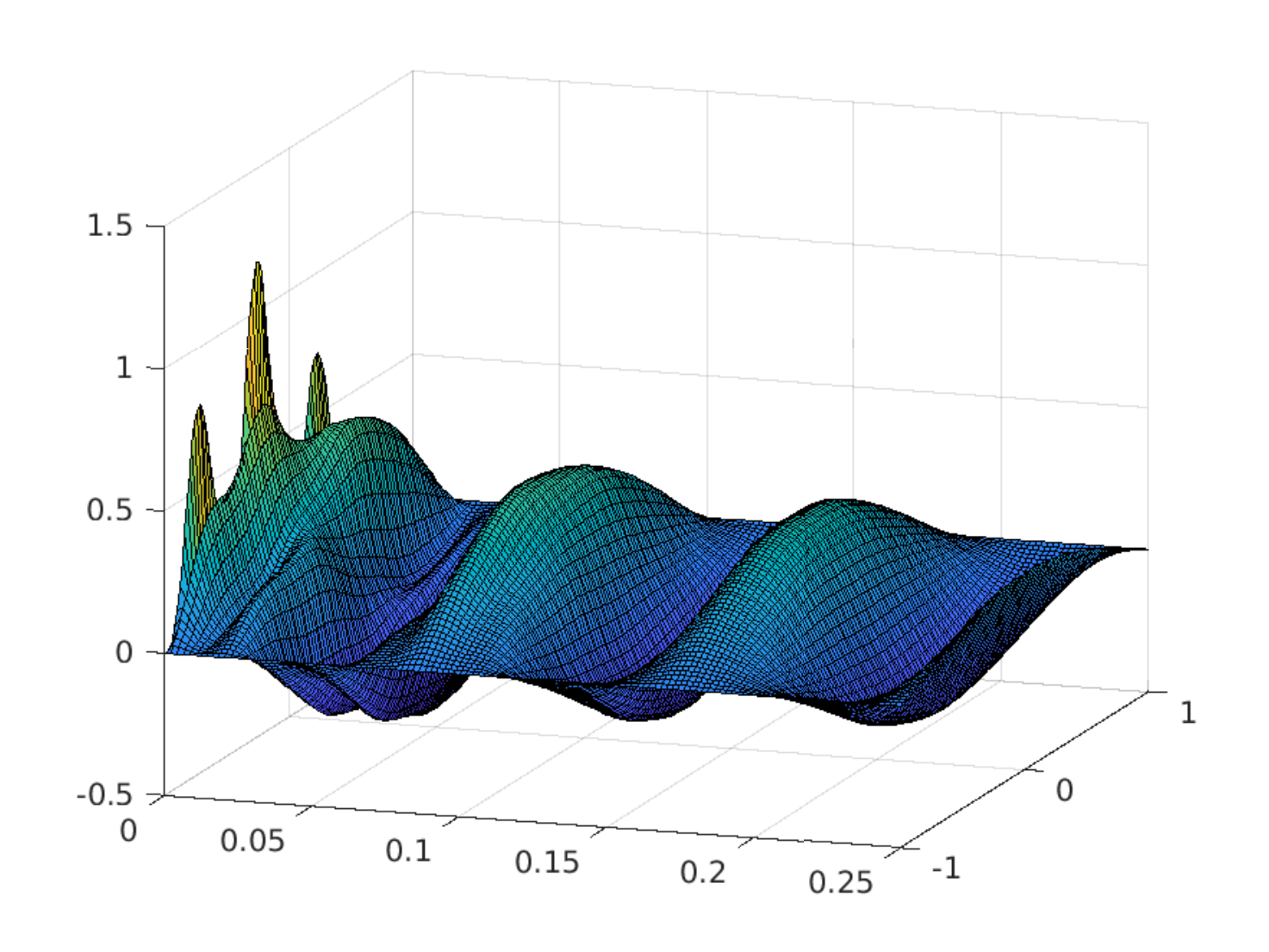}
		\put (45,0) {$\displaystyle t^*$}
		\put (87,9) {$\displaystyle x^*$}
		\put (45,70) {\rotatebox{0}{$\nu=0.64$}}
		\end{overpic}
  \end{minipage}
  \caption{Beam deformations $y^{*}(x^{*},t^{*})$ for non-zero initial conditions.}
\label{fig:Case1Initial}
\end{figure}

\begin{figure}[t!]
  \centering
	\vspace{5mm}
  \begin{minipage}[b]{0.49\textwidth}
    \begin{overpic}[width=1\textwidth]{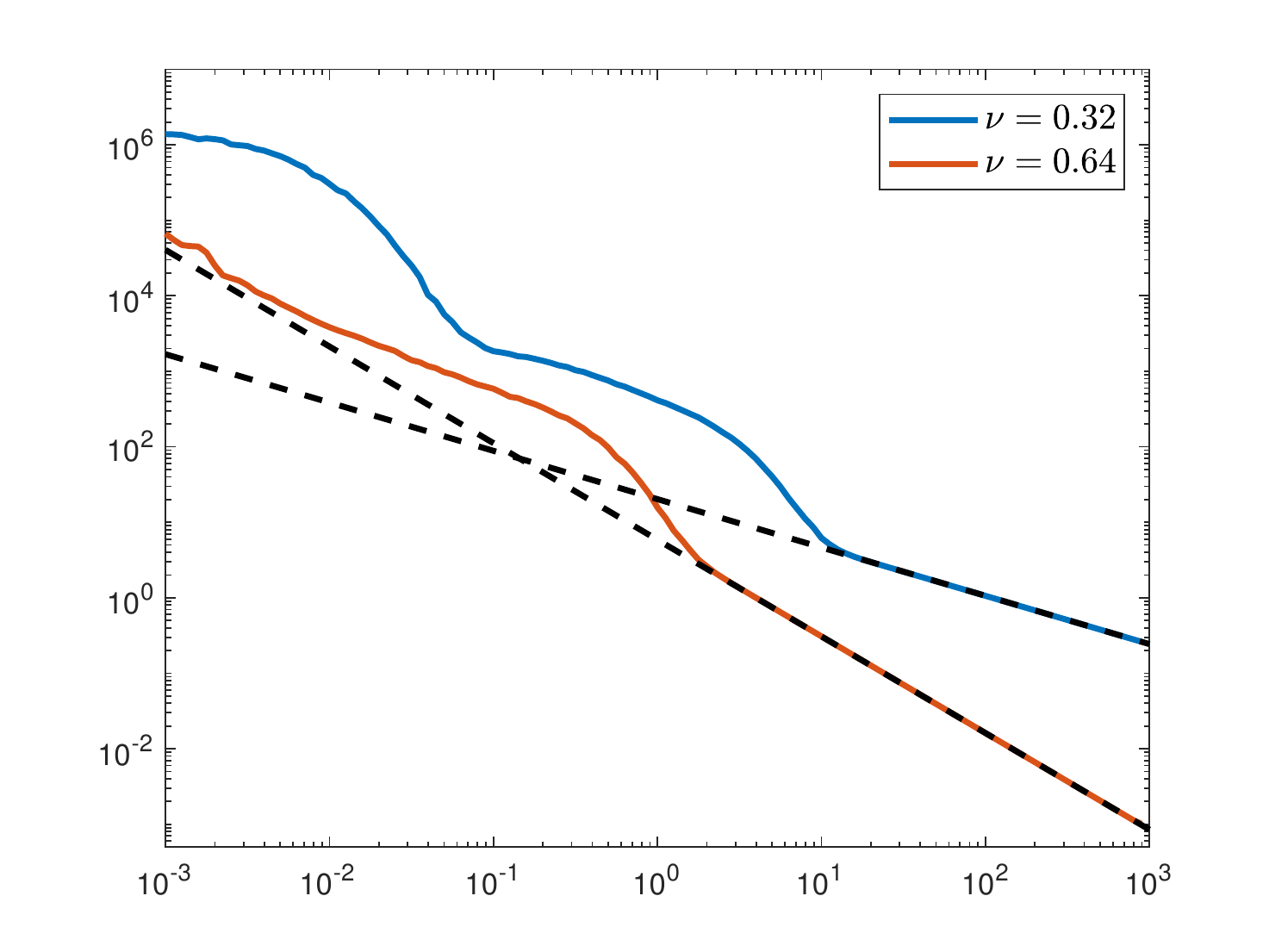}
		\put (50,-2) {$\displaystyle t^*$}
		\put (45,75) {\rotatebox{0}{$\displaystyle E^{*}(t^*)$}}
     \end{overpic}
  \end{minipage}
	\begin{minipage}[b]{0.49\textwidth}
    \begin{overpic}[width=1\textwidth]{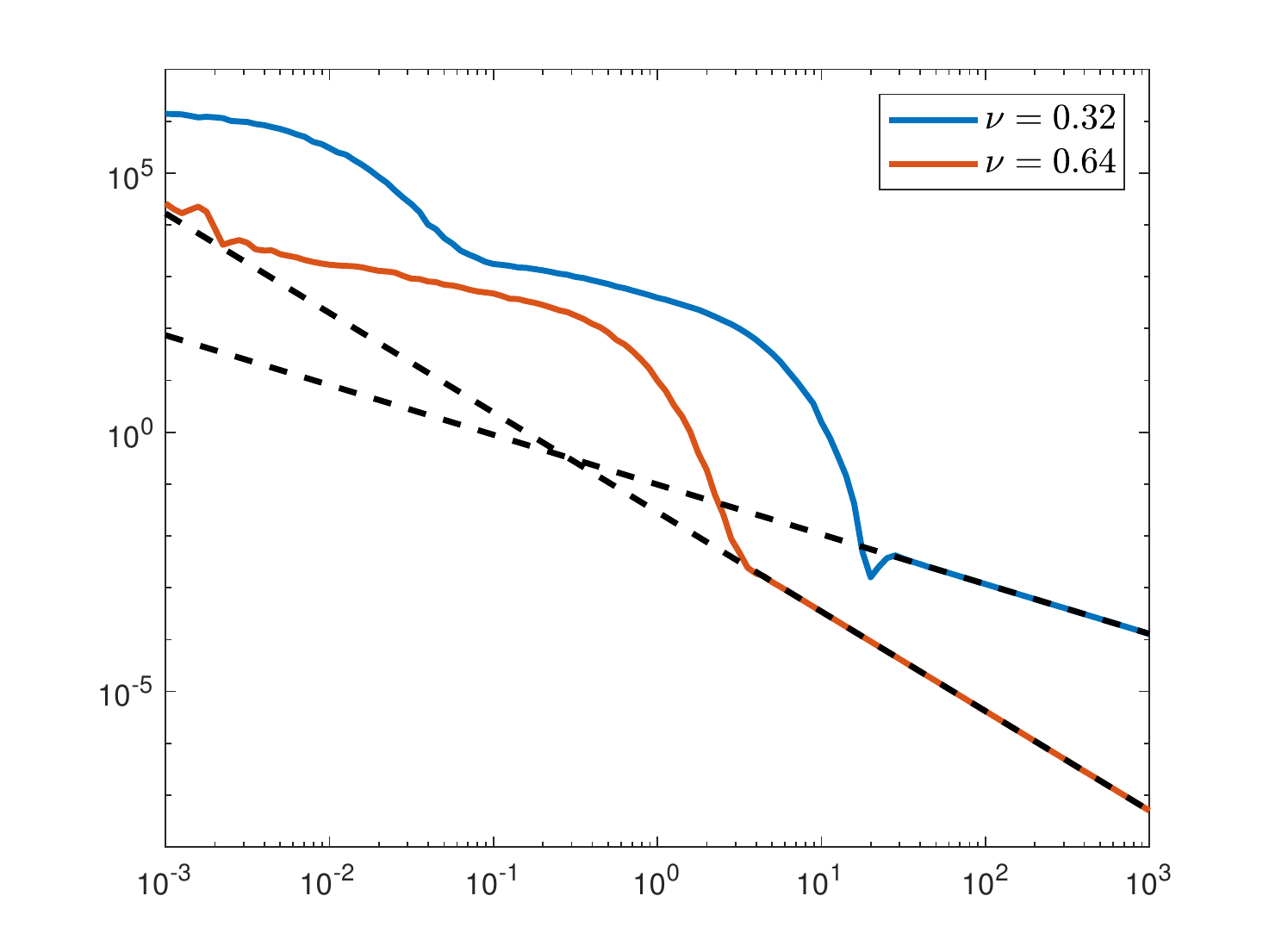}
		\put (50,-2) {$\displaystyle t^*$}
		\put (32,75) {\rotatebox{0}{$\displaystyle \left|E^{*}(t^*)-e_1\times {t^*}^{-2\nu}\right|$}}
     \end{overpic}
  \end{minipage}
  \caption{Left: Energy vs. time for two beams with identical initial deformations. The dashed lines show the leading asypmtotic term for the energy in \eqref{asymp_energy}. Right: The difference between the computed energy values and the leading asypmtotic term for the energy in \eqref{asymp_energy}. The dashed lines show the expected slopes of $\mathcal{O}({t^*}^{-3\nu})$.}
\label{fig:Case1Energy}
\end{figure}

\subsection{Example 2}

Our second physical example includes the effects of variable coefficients and is inspired by \cite{Example2}. The parameters are non-dimensionalised as before, giving
$$
\rho^* = 1,\quad E_{0}^*I^*= 1,\quad   F^*=(24-\pi^2(1+x^*)^{2}(1-x^*)^{2})\cos(\pi t^*),
$$
with both ends clamped, and initial conditions
$$
y^*(x^*,0)=(1+x^*)^{2}(1-x^*)^{2},\quad \pderiv{y^*}{t^*}(x^*,0)=0.
$$
When there is no damping, i.e., $E_{1}^*=0$, the analytic solution is given by
\begin{equation}
\label{an_no_damp}
y^*(x^*,t^*)=(1+x^*)^{2}(1-x^*)^{2}\cos(\pi t^*)
\end{equation}

We illustrate in Figure \ref{fig:Case2} the effect of a variable damping parameter given by
\begin{equation}
E_{1}^{*}I^{*}=1.01+\tanh(10x^*).
\end{equation}
This represents a beam that is very weakly damped for $-1<x^*<0$, but has stronger damping for $0<x^*<1$. The overall strength of the damping is also impacted by the fractional derivative, $\nu$, with larger $\nu$ indicating a more strongly damped beam. The energy at small times $t^*<1$ in all cases is reduced, however, at a slower rate for lower $\nu$ values, due to the initial deformation, as was the case for our previous example. The energy at larger times $t^*>1$ oscillates in time, as it is dominated by the forcing $\sim\cos(\pi t^*)$, and a more strongly damped beam (higher $\nu$) has overall lower energy and visibly weaker vibrations. Since, in this case, the beam's properties are asymmetric, despite an initial symmetric deformation and symmetric forcing, the beam's response is asymmetric, with greater oscillation in the weakly damped half, $-1<x^*<0$.

\begin{figure}[h!]
  \centering
  \begin{minipage}[b]{0.48\textwidth}
    \begin{overpic}[width=\textwidth]{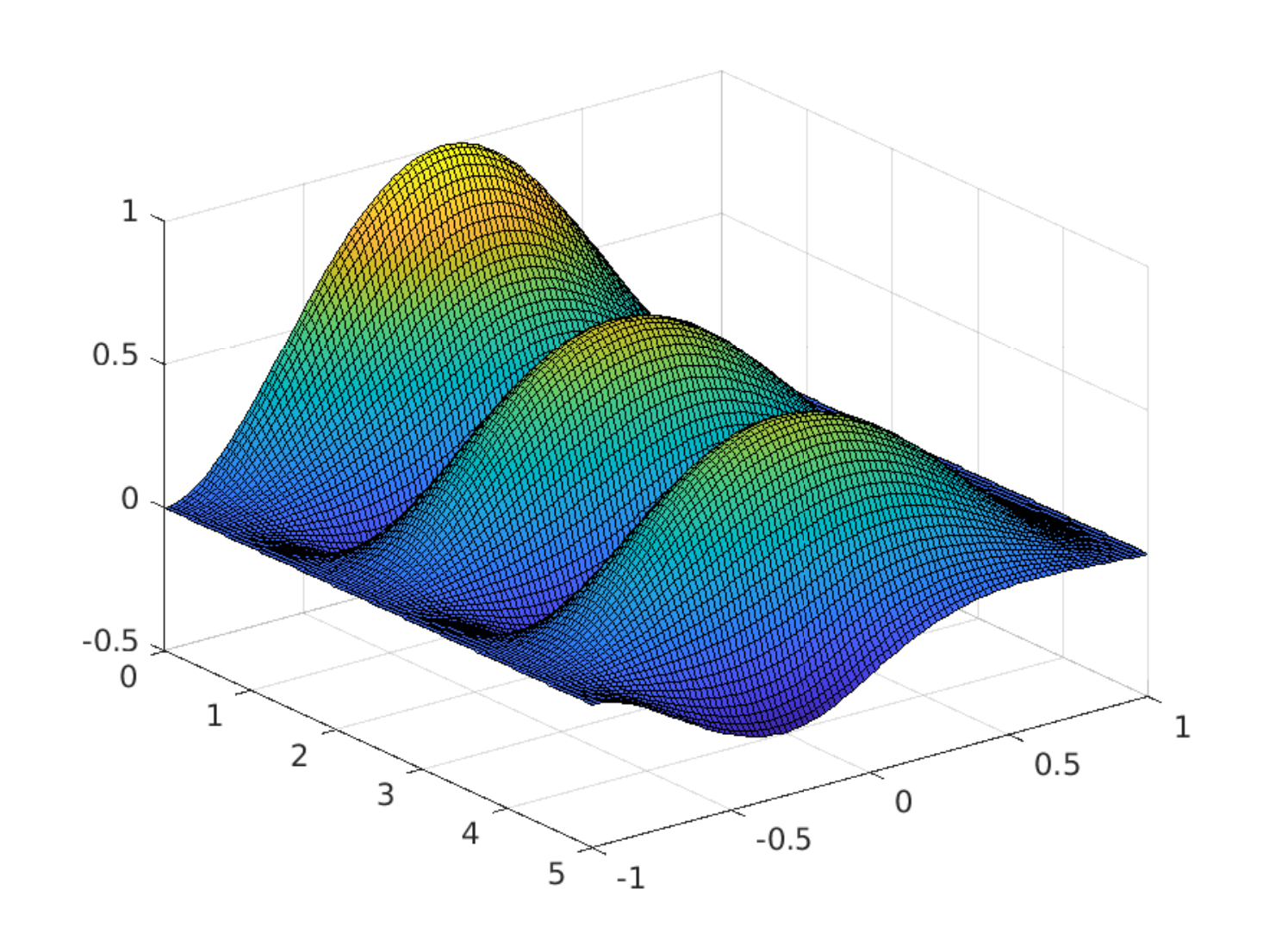}
		\put (23,7) {$\displaystyle t^*$}
		\put (75,5) {$\displaystyle x^*$}
		\put (45,70) {\rotatebox{0}{$\nu=0.5$}}
     \end{overpic}
  \end{minipage}
  \hfill
  \begin{minipage}[b]{0.48\textwidth}
    \begin{overpic}[width=\textwidth]{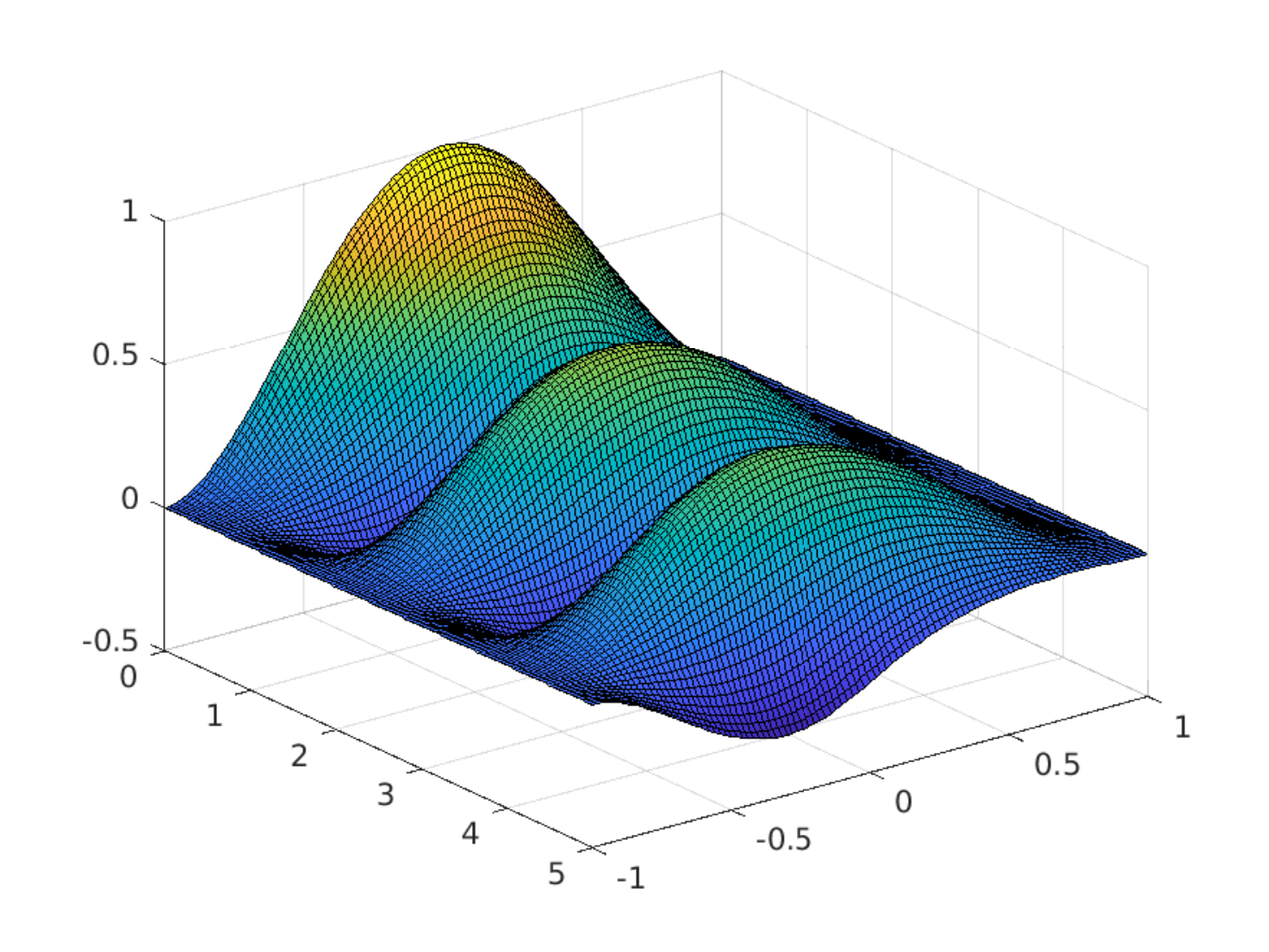}
		\put (23,7) {$\displaystyle t^*$}
		\put (75,5) {$\displaystyle x^*$}
		\put (45,70) {\rotatebox{0}{$\nu=0.7$}}
		\end{overpic}
  \end{minipage}
  \\\vspace{2mm}
    \begin{minipage}[b]{0.48\textwidth}
    \begin{overpic}[width=\textwidth]{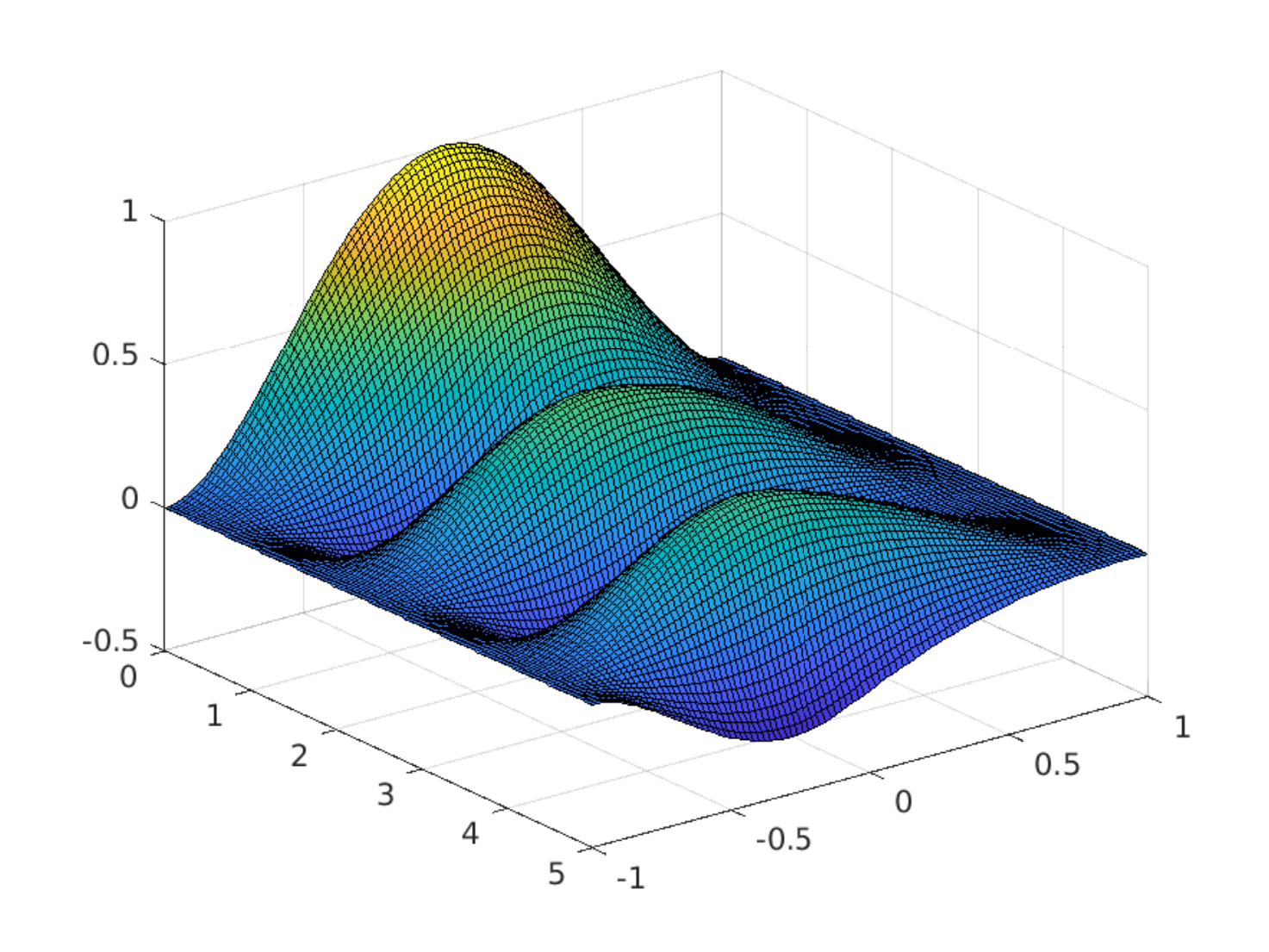}
		\put (23,7) {$\displaystyle t^*$}
		\put (75,5) {$\displaystyle x^*$}
		\put (45,70) {\rotatebox{0}{$\nu=1$}}
		\end{overpic}
  \end{minipage}
    \hfill
  \begin{minipage}[b]{0.48\textwidth}
    \begin{overpic}[width=\textwidth]{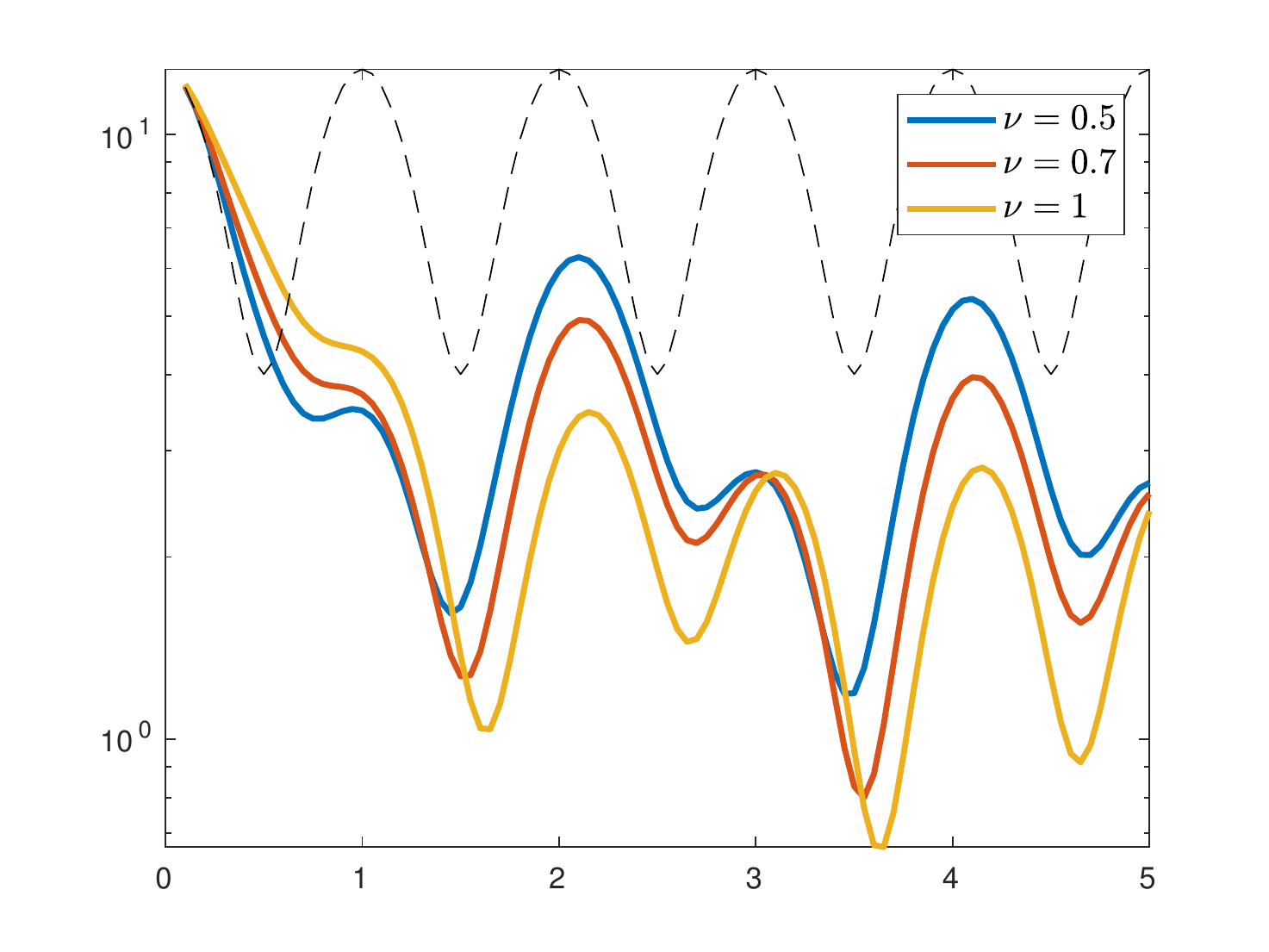}
		\put (50,-2) {$\displaystyle t^*$}
		\put (45,73) {\rotatebox{0}{$\displaystyle E^{*}(t^*)$}}
		\end{overpic}
  \end{minipage}
  \caption{Beam deformations $y^*(x^*,t^*)$ for Example 2. Bottom right: Energy vs. time for each case. The dashed line shows the energy for the solution \eqref{an_no_damp} in the case of no damping.}
\label{fig:Case2}
\end{figure}

\subsection{Example 3}
In this final example, we consider a time-fractional derivative of order greater that $1$ in a constituent relation. Such a situation arises for a composite structure obeying a fractional Burgers model \cite{B1}
\begin{equation}
\sigma + m\pderiv{^{\upsilon}\sigma}{t^{\upsilon}}+n\pderiv{^{2\upsilon}\sigma}{t^{2\upsilon}}= p\pderiv{^{\mu}\epsilon}{t^{\mu}}+q\pderiv{^{2\mu}\epsilon}{t^{2\mu}},
\end{equation}
where $\upsilon,\mu\in(0,1)$. Whilst infrequently used in the modelling of beams, the fractional Burgers model has received recent attention for describing the behaviour of viscoelastic fluids \cite{B2,B3}, and thus illustrating our method with a fractional parameter $1<\nu<2$ may prove useful in future applications. Our test case here is therefore unphysical (we shall not implement the full Burgers model), and selects identical parameters to those of Example 2, with the exception that now $\nu$ shall be greater than $1$. We plot the deformed surface for $\nu=1.2$ and $\nu=1.8$ in Figure \ref{fig:Case3}.

Since $\nu>1$ in both of these cases, the fractional derivative term in \eqref{eq:gov} acts to dampen the system more strongly than in Example 2 (where $\nu\leq 1$). However, in addition to damped wave-like behaviour, there is also dispersion-type behaviour, as seen in the fractional wave equation \cite{FDWave}. This dispersive behaviour is most clearly illustrated for  $\nu=1.8$, where the half of the plate with non-negligible $E_{1}^*$ values ($0<x^*<1$) shows strong asymmetry versus the undamped side ($-1<x^*<0$) which undergoes high-amplitude oscillations. We illustrate this particular feature in Figure \ref{fig:vibrations}, where we plot the deformation of the plate at three different times, $t^{*}=0.5, 1, 1.5,$ and $2$.

\begin{figure}[t!]
  \centering
  \begin{minipage}[b]{0.48\textwidth}
    \begin{overpic}[width=\textwidth]{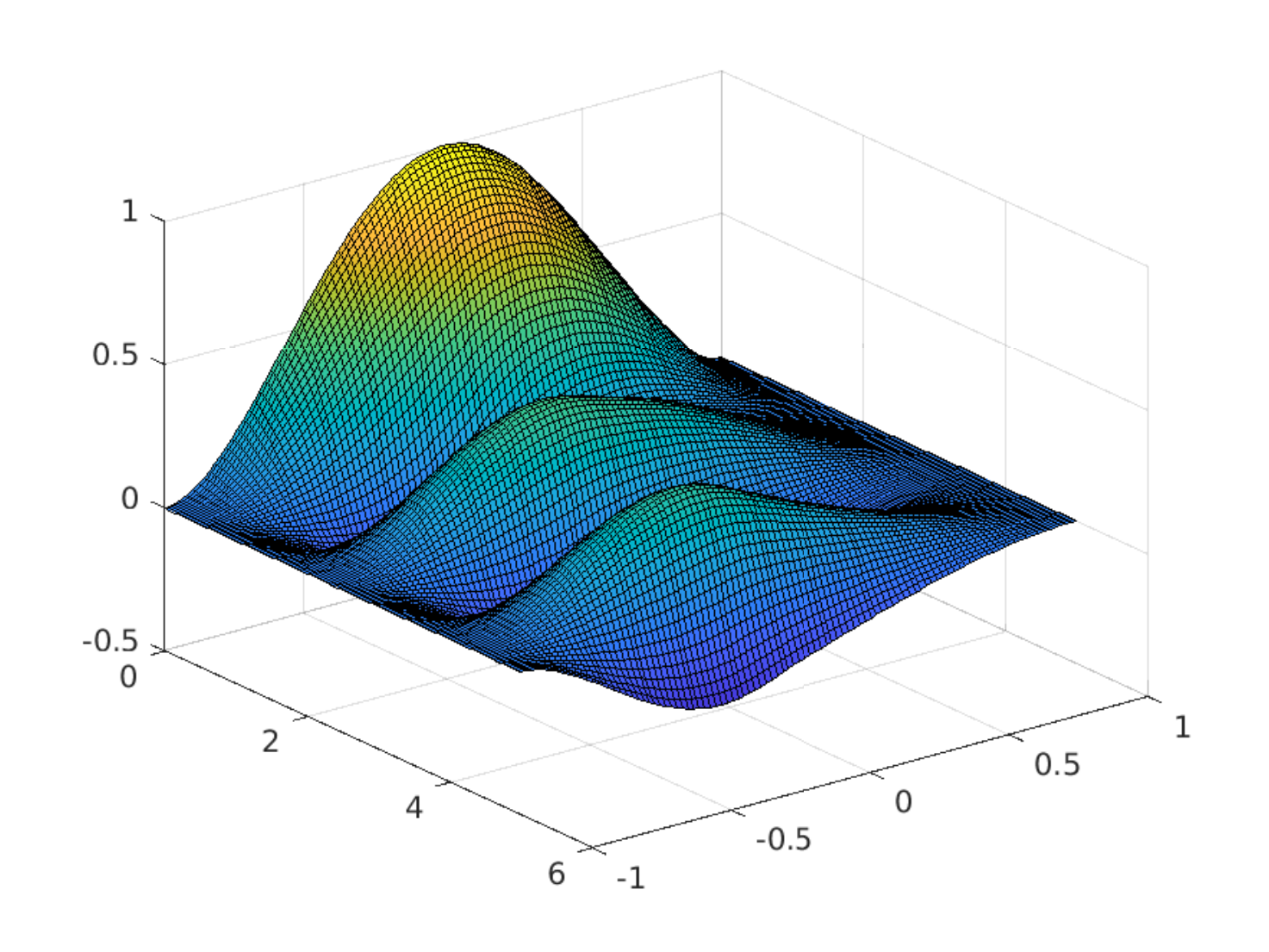}
		\put (23,7) {$\displaystyle t^*$}
		\put (75,5) {$\displaystyle x^*$}
		\put (45,70) {\rotatebox{0}{$\nu=1.2$}}
     \end{overpic}
  \end{minipage}
  \hfill
  \begin{minipage}[b]{0.48\textwidth}
    \begin{overpic}[width=\textwidth]{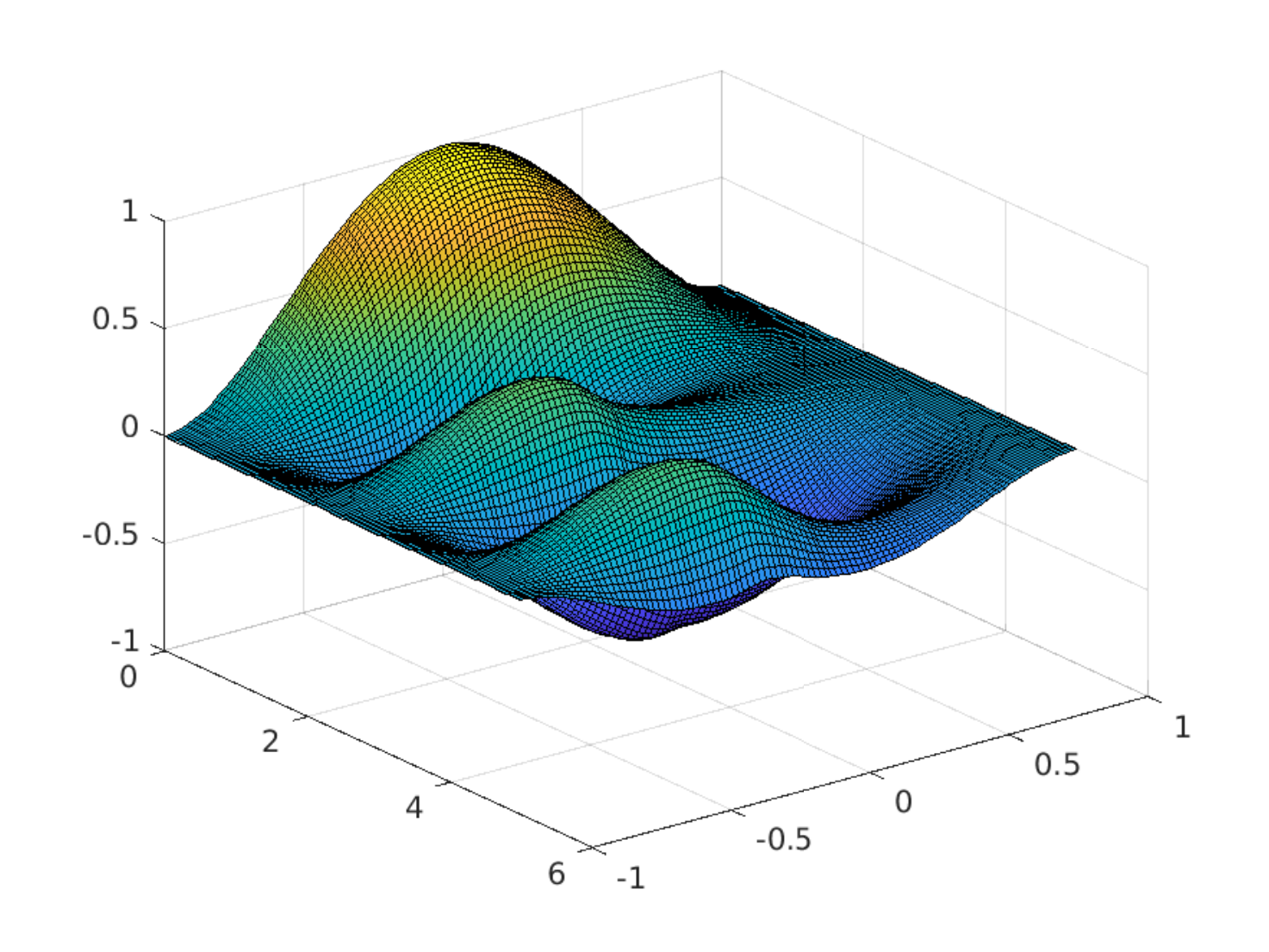}
		\put (23,7) {$\displaystyle t^*$}
		\put (75,5) {$\displaystyle x^*$}
		\put (45,70) {\rotatebox{0}{$\nu=1.8$}}
		\end{overpic}
  \end{minipage}
  \caption{Beam deformations $y^*(x^*,t^*)$ for Example 3.}
\label{fig:Case3}
\end{figure}

\begin{figure}[t!]
  \centering
	\begin{minipage}[b]{0.48\textwidth}
    \begin{overpic}[width=\textwidth]{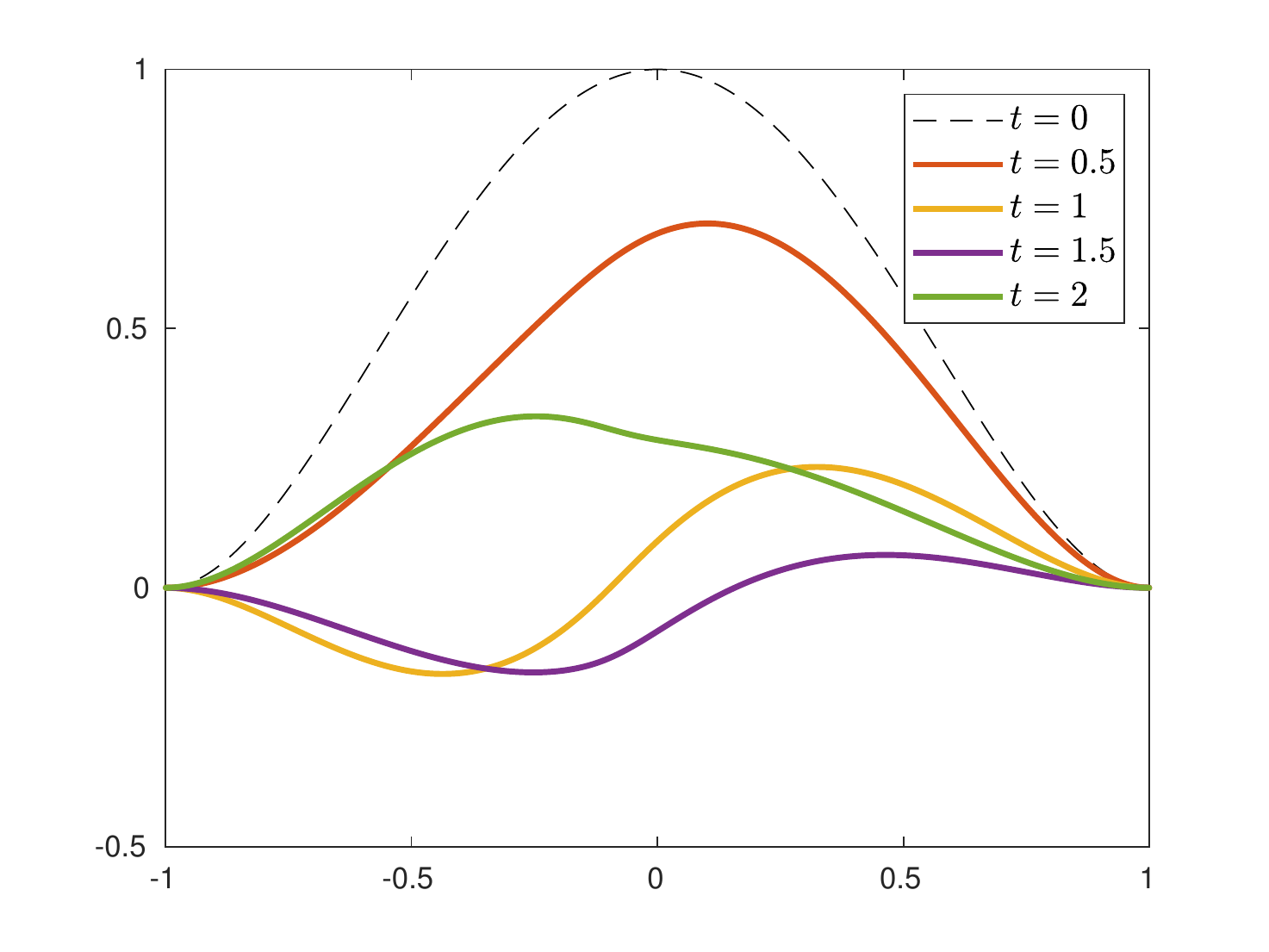}
		\put (50,0) {$\displaystyle x^*$}
		\put (45,73) {\rotatebox{0}{$\nu=1.2$}}
		\end{overpic}
  \end{minipage}
	\hfill
	\begin{minipage}[b]{0.48\textwidth}
    \begin{overpic}[width=\textwidth]{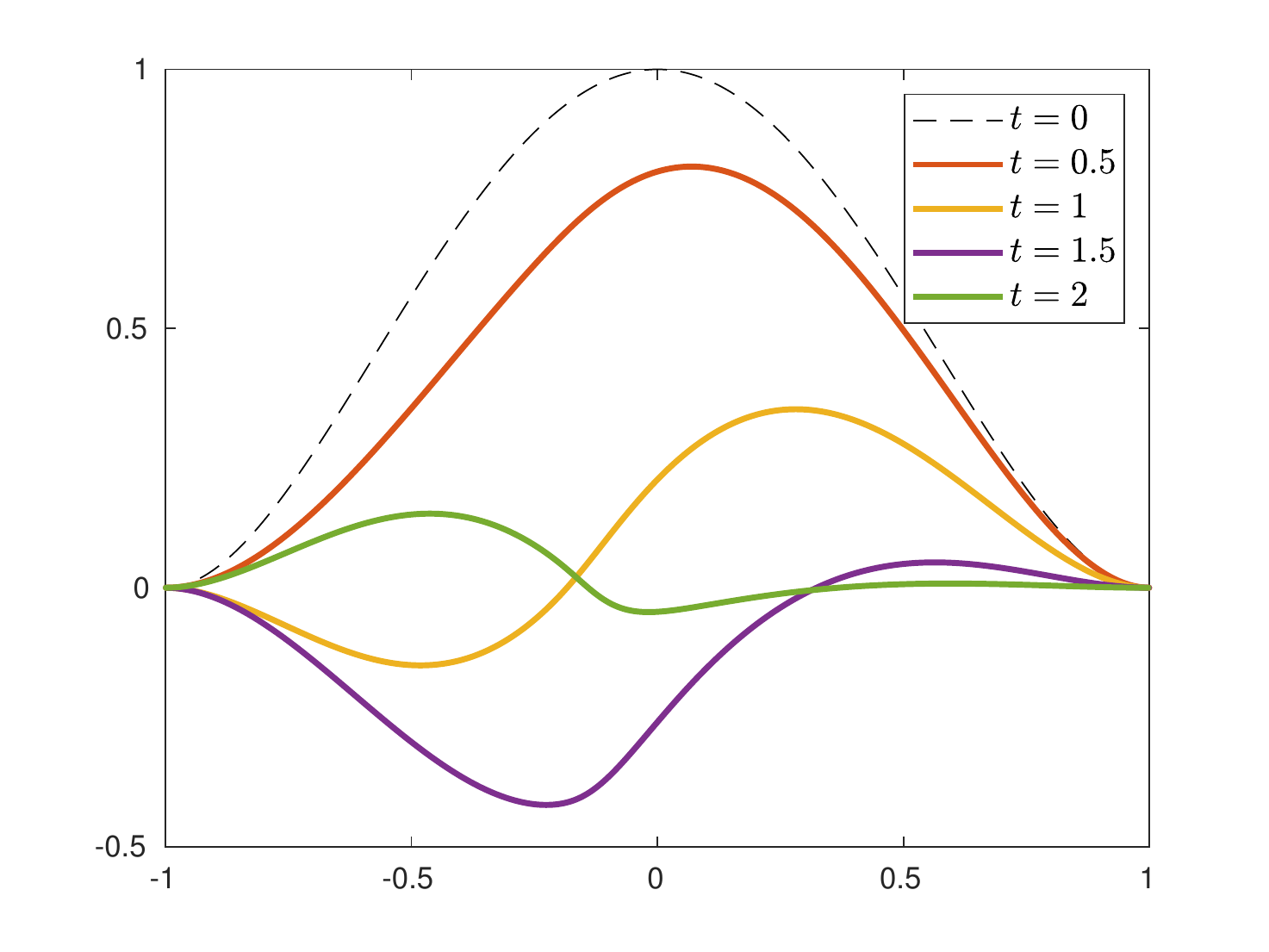}
		\put (50,0) {$\displaystyle x^*$}
		\put (45,73) {\rotatebox{0}{$\nu=1.8$}}
		\end{overpic}
  \end{minipage}
  \caption{Beam deformations $y^*(x^*,t^*)$ for Example 3 at times $t^{*}=0.5,1,1.5$ and $2$.}
\label{fig:vibrations}
\end{figure}

\section{Conclusion}
\label{sec:conc}

We have presented an approach that allows time-fractional PDEs to be solved efficiently and accurately. Some distinct advantages of this approach are: the resolvents can be computed in parallel and reused for different times, the mitigation of large memory consumption and poor accuracy typically associated with local (in time) methods, the method avoids singularities of the solution at $t=0$ (thus making analysis of the singularities or tuning unnecessary), explicit error control, stability, and rapid convergence (both in the quadrature rule and solution of the linear systems). A drawback of this approach is that information on the location of the generalised spectrum (corresponding to the singularities of the Laplace transform) is needed to choose the integration contour. However, since the generalised spectrum of a differential operator is typically easier to study than that of a discretisation, the analysis of the generalised spectrum is made much easier through an infinite-dimensional approach. We expect this approach to be feasible for many further families of time-fractional PDEs and hope that this new perspective will open up the door for further applications of contour methods.

We have illustrated our approach through application to the time-fractional equation governing the small-amplitude vibrations of a thin viscoelastic beam satisfying a fractional Kelvin--Voigt constituent relationship. This problem is ubiquitous and fundamental to beam-vibration studies, and there is a wealth of literature dedicating itself to obtaining fast, accurate solutions. However, this prior work has struggled to incorporate non-zero initial conditions (initial deformations of the beam) and control the error in predictions. Our approach has been shown to overcome these difficulties and produce results for the time-evolution of the beam deformations in cases of (spatially) constant and variable viscoelastic properties. Since viscoelastic constitutive relations are determined through comparison to experiment and therefore incur a measurable error, we foresee our method being particularly useful for ensuring that this error is not exceeded during subsequent vibration modelling. Furthermore, since the method permits any number of time-fractional terms, the door is opened for more complex constitutive relations to be used in the future modelling of viscoelastic materials.

We finish with some outlines for future work. We have focused our analysis on one spatial dimension. However, the results of \S \ref{sec:three} can be generalised to fractional beam equations in higher spatial dimensions. We expect that efficient solvers such as \cite{fortunato2020ultraspherical} will be useful in gaining an efficient and accurate ``infinite-dimensional'' method. Another possibility is the inclusion of spatial fractional derivatives. Here, we expect that spectral methods such as those developed in \cite{zayernouri2014fractional,zayernouri2013fractional,hale2018fast} can be effectively combined with an inverse Laplace transform in time. In some cases, it may be possible to extend these techniques to non-linear problems using exponential integrators \cite{lopez2010quadrature}. However, due to the Laplace transform approach, it may be difficult to deal with non-linear problems without time-stepping (e.g., via Newton's method\footnote{If one applies Newton's method, this induces coefficients in the linearised PDE that depend on space and time. Thus, dealing with time-variable coefficients may lead to this being possible in the future.}). Finally, time-dependent coefficients pose a challenging problem since taking the Laplace transform leads to a convolution. There are works that deal with this case \cite{lee2013laplace}, and it would be interesting to combine them with the techniques of this paper.

\section*{Acknowledgements}

MJC is supported by a Research Fellowship at Trinity College, Cambridge, and a Foundation Sciences Mathématiques de Paris postdoctoral fellowship at École normale supérieure. LJA is supported by EPSRC Early Career Fellowship EP/P015980/1.

{\small
\linespread{0.92}\selectfont{}
%\small
%\bibliographystyle{plain}
\bibliographystyle{unsrt}
\bibliography{semigroup}
\linespread{1}\selectfont{}}

\appendix
\section{Long-time energy}\label{app}
To approximate the long-time energy, $E(t)$, of the system governed by \eqref{NL_gen_form} with $0<\nu<1$, and $F=0$, we take the Laplace transform, $t\to z$, yielding
$$
z^{2}\hat y(x,z)-zy(x,0)-y_{t}(x,0)+\frac{1}{\tilde\rho(x)}\pderiv{^{2}}{x^{2}}\left[a(x)\pderiv{^{2}\hat y}{x^{2}}+b(x)\left[z^{\nu}\pderiv{^{2}\hat y}{x^{2}}-z^{\nu-1}y_{xx}(x,0)\right] \right]=0.
$$
When considering the inverse Laplace transform, the method of steepest descents tells us that the large time approximation is dominated by the small $z$ behaviour. Therefore, we approximate our equation to
$$
\frac{1}{\tilde\rho(x)}\pderiv{^{2}}{x^{2}}\left[a(x)\pderiv{^{2}\hat y}{x^{2}}\right]\sim \frac{z^{\nu-1}}{\tilde\rho(x)}\pderiv{^{2}}{x^{2}}\left[b(x) y_{xx}(x,0)\right]+\mathcal{O}(1),
$$
where the $\mathcal{O}(1)$ error on the right hand side exists only when $y_{t}(x,0)\neq0$. For simplicity, assume that $a$ and $b$ are constant. Integrating in $x$ gives,
$$
\hat y(x,z)\sim \frac{b}{a}z^{\nu-1}y(x,0)+\mathcal{O}(1)+\mathcal{O}(z^{\nu}),
$$
where we keep both the $\mathcal{O}(1)$ and $\mathcal{O}(z^{\nu})$ errors since the $\mathcal{O}(1)$ error disappears when $y_{t}(x,0)=0$. Inverting the Laplace transform yields
\begin{equation}
y(x,t)\sim \frac{b}{2\pi i a}y(x,0)\int_{\gamma}(z^{\nu-1}+\mathcal{O}(1,z^{\nu}))e^{zt}dz
\end{equation}
where $\gamma$ is the Bromwich contour, and our branch cut at $z=0$ extends along the negative real axis. Deforming $\gamma$ to the branch cut and integrating thus yields
\begin{equation}
y(x,t)\sim \frac{b}{\pi a}y(x,0)\sin(\pi\nu)\Gamma(\nu)t^{-\nu}+\mathcal{O}(t^{-2\nu}).
\end{equation}
Here we note that any $\mathcal{O}(1)$ term will integrate to zero around the branch cut, therefore the error in either case $y_{t}(x,0)=0$ or $y_{t}(x,0)\neq0$ is $\mathcal{O}(t^{-2\nu})$.
The dominant contribution to the energy arises from the spatial, $x$, derivative, therefore
\begin{equation}
E(t)\sim\frac{1}{2\pi^{2}}\sin^{2}(\pi \nu)[\Gamma(\nu)]^{2}\frac{b^{2}}{a}\int_{-1}^{1}|y_{xx}(x,0)|^{2}dx\, t^{-2\nu} + \mathcal{O}(t^{-3\nu}).
\end{equation}

Note that, for the case of $F\neq0$, or $\nu>1$, the dominant contribution as $z\to0$ must be reassessed. Any singularities of the Laplace transform of $F$ must be taken into account during the contour deformation. Further, in the case of $a$ and/or $b$ being variable in $x$, one would need to include their integrated contribution in solving the approximated governing equation for small $z$. Such a solution will yield a similar yet cosmetically unpleasant-looking, leading-order term for the energy.

\end{document}